\documentclass[11pt]{article}
\usepackage[a4paper,margin=26mm]{geometry}
\usepackage[T1]{fontenc}
\usepackage{lmodern}
\usepackage{microtype}
\usepackage{xspace}
\usepackage{amsmath,amssymb,amsthm,mathtools,bm}
\usepackage{mathrsfs}
\usepackage{booktabs,tabularx,array,multirow}
\usepackage{graphicx}
\usepackage{tikz}
\usetikzlibrary{arrows.meta,positioning}
\usepackage{float}
\usepackage{xcolor}
\usepackage{enumitem}
\usepackage{placeins}
\usepackage[numbers,sort&compress]{natbib}
\usepackage[colorlinks=true,linkcolor=blue!45!black,citecolor=blue!45!black,
  urlcolor=blue!45!black,
  pdftitle={MORTIS: Quadrature-compatible cofactor gauges and optimal material margins in finite elasticity},
  pdfauthor={Yanlin Liu, Kaixiang Yao, Chao Huang, Yao Shen},
  pdfsubject={Discrete cofactor-gauge compatibility, optimal material margins and coercive reference forms},
  pdfkeywords={finite elasticity, cofactor gauges, quadrature compatibility, optimal material margins, tensor-product elements, serendipity}]{hyperref}
\usepackage[nameinlink,capitalise]{cleveref}

\newcommand{\R}{\mathbb{R}}
\newcommand{\GLp}{\mathrm{GL}^{+}(3)}
\newcommand{\cof}{\operatorname{cof}}
\newcommand{\tr}{\operatorname{tr}}
\newcommand{\sym}{\operatorname{sym}}
\newcommand{\dev}{\operatorname{dev}}
\newcommand{\rank}{\operatorname{rank}}

\newcommand{\In}{\operatorname{In}}
\newcommand{\norm}[1]{\left\lVert #1\right\rVert}
\newcommand{\Qfour}{Q_4}
\newcommand{\That}{\widehat T}

\newtheorem{theorem}{Theorem}
\newtheorem{proposition}[theorem]{Proposition}
\newtheorem{lemma}[theorem]{Lemma}
\newtheorem{corollary}[theorem]{Corollary}
\theoremstyle{definition}

\theoremstyle{remark}

\AddToHook{env/theorem/begin}{\crefalias{theorem}{theorem}}
\AddToHook{env/proposition/begin}{\crefalias{theorem}{proposition}}
\AddToHook{env/lemma/begin}{\crefalias{theorem}{lemma}}
\AddToHook{env/corollary/begin}{\crefalias{theorem}{corollary}}
\AddToHook{env/definition/begin}{\crefalias{theorem}{definition}}
\AddToHook{env/remark/begin}{\crefalias{theorem}{remark}}

\AddToHook{cmd/appendix/after}{%
  \crefalias{section}{appendix}%
  \crefalias{subsection}{subappendix}%
  \crefalias{subsubsection}{subsubappendix}}

\title{\textbf{MORTIS: Quadrature-compatible cofactor gauges\\
and optimal material margins in finite elasticity}}
\author{Yanlin Liu$^1$ \and Kaixiang Yao$^{3,\dagger}$ \and
Chao Huang$^{2,\dagger}$ \and Yao Shen$^{2,*}$}
\date{\small $^1$The University of Melbourne, Melbourne, Australia\\
$^2$Shanghai Jiao Tong University, Shanghai, China\\
$^3$Southern University of Science and Technology, Shenzhen, China\\
$^\dagger$Kaixiang Yao and Chao Huang share second authorship.\\
$^*$Corresponding author: \texttt{yaoshen@sjtu.edu.cn}}

\begin{document}
\maketitle

\begin{abstract}
We develop a discrete design theory for cofactor reference forms that
preserve the complete finite-element elasticity tangent.
A fixed continuous potential generates the coefficient, while the
variation space and quadrature determine compatibility.
We classify the exact local potential spaces for all-order tensor
elements, anisotropic spaces, cubic serendipity and quadratic tetrahedra.
At a stress-free Mooney--Rivlin state with a common positive
material-coefficient sum, a convex loss expresses the attainable
quadrature-point material margin.
The optimum is attained; under stated space and sampling assumptions,
its ideal value is attained precisely when the physical identity
potential is compatible.
A conforming fixed-candidate example proves a strict margin loss,
linear in curvature amplitude and uniform in cell size, under
displacement enrichment. Explicit positive candidates accompany this
restriction. A Gram identity gives the sharp constant-coefficient
defect at a prescribed margin in the stated norm.
Compatible and reference-defect-corrected forms retain the original
tangent through a deformation-independent external-boundary Hessian;
equal potential traces give equal exact assembled gauges.
Quadratic tensor geometry admits a stress-free physical coercivity
bound uniform in mesh size and displacement order under explicit
material, shape, quadrature and boundary assumptions.
Certified curved tetrahedral shear families with zero recovered cell
pressure establish finite-deformation regimes, while a loaded
pressure-curvature counterexample identifies
their limitation. The theory and observations distinguish local
exactness, material certificates, complete-reference coercivity and
unchanged equations.
\end{abstract}

\noindent\textbf{Keywords:} finite elasticity; cofactor gauges; quadrature
compatibility; material coercivity; tensor-product elements; serendipity

\section{Introduction}
\label{sec:introduction}

A positive reference for a finite-element tangent must respect the
discretization that produced the equation. Cofactor translations offer
a useful freedom: a fixed auxiliary potential generates an added
quadratic form, whose exact compensation leaves the original tangent
unchanged. The potential cannot be chosen by material positivity alone.
Its gradient, the displacement variation space and the quadrature rule
jointly determine whether the added form is integrated exactly.

MORTIS stands for \emph{Margin Optimization and Reference Translation in
Integration-compatible Spaces}. It studies this discrete design problem.
We call the chosen quadratic reference a \emph{gauge}. The analysis
connects three questions: which potentials are compatible, how much
material positivity they can certify, and when the resulting complete
reference is coercive. The construction retains the original material
quadrature and every cell-pressure term.

The first result classifies the exact local potential spaces, testing
against every pair of unrestricted displacement variations.
Let \(\mathbb Q_p\) denote scalar polynomials of degree at most \(p\)
in each coordinate, and let \(n\) be the Gauss count per coordinate.
For vector variations and candidates in \(\mathbb Q_p^3\), the
compatible class is
\[
 \mathbb Q_{\min\{p,\,2(n-p)\}}^3,\qquad p\ge2,\quad n\ge p+1.
\]
The usual \(n=p+1\) rule therefore permits exactly quadratic tensor
potentials at every higher displacement order. The converse proof
resolves cancellation through leading Gauss moments and transverse
boundary moments. Anisotropic spaces have a one-high-axis exception.
Cubic serendipity has three extra mixed-cubic modes; enriching only
the variations to full tensor cubics removes them.

The second result determines attainable material margins at the
stress-free Mooney--Rivlin state. With common material-coefficient sum
\(c>0\), a convex coefficient loss expresses the best lower bound
relative to a fixed positive matrix metric, at the selected quadrature
points and for every matrix increment. The optimum is attained.
Under explicit candidate-space and gradient-sampling assumptions,
its ideal value \(c\) is attainable precisely when the physical identity
potential \(\iota(X)=X\) is compatible. A conforming checkerboard
construction then proves a strict loss, linear in curvature amplitude
and uniform in cell size, when the candidate space and quadrature
are fixed and only the variations are enriched.
A separate Gram identity gives the sharp minimum constant-coefficient
defect at a prescribed margin in a stated basis and norm.

The third part establishes complete reference forms with exact
boundary compensation. For a fixed continuous compatible potential,
the added Hessian is a deformation-independent external-boundary form.
A positive reference gives a positive interior block and bounds the
complete tangent's nonpositive index by the free displacement-trace
dimension. Explicit reference-defect correction extends the
representation beyond cellwise exactness.
Equal full potential traces give equal exactly assembled gauges,
even when their local material certificates differ.
With a full displacement clamp the compatible reference equals the
original tangent. These identities separate a material-certificate
comparison from an ordering of assembled spectra or solver times.

Quadratic tensor geometry supplies a stress-free physical coercivity
bound uniform in mesh size and displacement order, under whole-cell
shape, quadrature, material and Poincar\'e assumptions.
Curved tetrahedral constructions cover aligned, defect-corrected and
variable-compatible finite-shear families, with rational or interval
proofs. Their recovered pressures vanish, so the complete volumetric
term adds a nonnegative outer product at arbitrary nonnegative bulk.
The loaded counterexample retains the signed pressure-geometric term
that those bounds do not control.
Numerical observations test the same choices and preserve failed
references, the distinction between explicit margins and optimum bounds,
and the absence of a consistent partial-boundary iterative advantage.

The underlying variational freedom is classical.
The deformation gradient, cofactor and determinant represent line,
area and volume changes; polyconvex and mixed-minor formulations
exploit this structure
\citep{ball1976convexity,schroder2011mixed,bonet2015polyconvex}.
Potential-generated null Lagrangians allow reference-dependent
coefficients \citep[Eq.~(3.3)]{edelen1986null}
and admit the Piola boundary representation
\citep{olver1988structure,kupferman2019piola,olver2024boundary}.
Translations have established stability and optimization theories,
including attainment
\citep{sivaloganathan1989hamilton,firoozye1991optimal,chernyavsky2023convex}.
The discrete kernel, ideal-margin rigidity and fixed-candidate
enrichment restriction distinguish the present design problem;
semidefinite eigenvalue optimization is a standard tool
\citep{vandenberghe1996semidefinite}.

Discrete geometric conservation also constructs compatible metric
fields \citep{chan2019discretely,bach2025mimetic}.
Here the rule and local variations are fixed, and exactness is required
on every variation pair. The actual tensor or serendipity space
\citep{arnold2011serendipity}, curved geometry and quadrature
\citep{ciarlet1972curved,lenoir1986optimal} therefore enter the
classification explicitly.
Minor calculus and tangent eigensystems supply the material identities
\citep{bonet2016cross,poya2023variational,poya2025generalised};
mean-coercivity and coefficient-jump mechanisms have continuum
precedents \citep{bevan2024hadamard,bevan2025applications}.
The companion RUPA study changes the original nonlinear volume
constraints \citep{liu2026factor}. MORTIS keeps those equations and
studies their reference representations, using the same polarized
determinant foundation.

\Cref{sec:setting} defines the complete construction and gives a
concrete potential choice. \Cref{sec:compatible-kernels,sec:optimal-margin,sec:enrichment-loss}
develop the compatible spaces and their attainable margins.
The constant-coefficient comparison, exact reference identities and
coercivity regimes follow in
\cref{sec:constant-margin-comparison,sec:complete-reference-forms,sec:coercivity-regimes}.
\Cref{sec:evidence} presents the principal observations.
The appendices retain all supporting coordinate, material,
certificate, trace-repair and numerical details.

\section{Discrete geometry, cofactor gauges, and the complete energy}
\label{sec:setting}
\label{sec:discrete-gauge-setting}

\subsection{Matrices, minors, and variations}

Let $\mathbb M=\R^{3\times3}$, with Frobenius product
$H:L=\tr(H^TL)$ and norm $\|H\|_F=(H:H)^{1/2}$.
Vector norms with subscript $2$ are Euclidean; the matrix norm $\|H\|_2$
is its largest singular value. Write $I_d$ for the identity matrix of
dimension $d$ and
$\GLp=\{F\in\mathbb M:\det F>0\}$.
For symmetric forms, $\mathcal B\succeq\mathcal A$ means
$\mathcal B[H,H]\ge\mathcal A[H,H]$ for every increment $H$;
$\mathcal A\succ0$ denotes positive definiteness. Matrices of forms on
$\mathbb M$ use a Frobenius-orthonormal entry basis unless another
basis is stated.

For a deformation gradient $F\in\GLp$, define
\[
J=\det F,\qquad C=\cof F=JF^{-T},\qquad I_1=F:F,\qquad I_2=C:C.
\]
The deformation gradient, cofactor, and determinant map infinitesimal
line, area, and volume elements. Their role in finite elasticity follows
the classical minor and polyconvex formulations
\citep{ball1976convexity,schroder2011mixed,bonet2015polyconvex}.
The cofactor is also defined on singular matrices by its signed
second-order minors.

The symbols $D,D^2$ denote first and second Fr\'echet derivatives.
For matrix increments $H,L\in\mathbb M$, put
$\theta_H=F^{-T}:H$ and define $\theta_L$ similarly.
Differentiation of the determinant gives
\[
DJ(F)[H]=J\theta_H,\qquad
D(F^{-1})[H]=-F^{-1}HF^{-1},
\]
and
\begin{equation}
D^2J(F)[H,L]
=J\{\theta_H\theta_L-\tr(F^{-1}HF^{-1}L)\}.
\label{eq:det-second}
\end{equation}
For two scalar linear functionals $\alpha,\beta$,
$(\alpha\otimes\beta)[H,L]=\alpha[H]\beta[L]$.

We use the tensor cross product in its cofactor-polarized form
\citep{bonet2016cross}:
\begin{equation}
H\times L=\cof(H+L)-\cof H-\cof L,\qquad H\times H=2\cof H.
\label{eq:cross-definition}
\end{equation}
It is symmetric and bilinear. If $h_i,l_i$ are the columns of $H,L$,
its first column is
$h_2\times_{\R^3}l_3+l_2\times_{\R^3}h_3$; the others follow cyclically.
The subscript distinguishes the usual three-dimensional vector cross
product from the matrix operation.

\begin{lemma}[Cofactor derivatives]
\label{lem:cofactor-derivatives}
The linear cofactor derivative at $F\in\GLp$ is the invertible map
\begin{equation}
K_F(H)=D\cof(F)[H]
=J\{\theta_HF^{-T}-F^{-T}H^TF^{-T}\}.
\label{eq:dcof}
\end{equation}
The second derivative is independent of $F$:
\begin{equation}
D^2\cof(F)[H,L]=H\times L.
\label{eq:d2cof}
\end{equation}
Moreover,
\begin{equation}
F:(H\times L)=D^2J(F)[H,L].
\label{eq:det-cross}
\end{equation}
\end{lemma}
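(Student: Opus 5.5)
The plan is to work from the identity $\cof F=JF^{-T}$ on $\GLp$ for the first-derivative formula, and from the fact that $\cof$ is a homogeneous quadratic polynomial map on $\mathbb M$ for the second-derivative formula.

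\emph{First derivative.} Apply the product rule to $JF^{-T}$. By the stated $DJ(F)[H]=J\theta_H$ and $D(F^{-1})[H]=-F^{-1}HF^{-1}$, transposition gives $D(F^{-T})[H]=-F^{-T}H^TF^{-T}$. Together these yield \eqref{eq:dcof} at once.

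\emph{Invertibility of $K_F$.} Write
\[
K_F(H)=JF^{-T}\bigl\{(\tr(F^{-1}H))I-(F^{-1}H)^T\bigr\}F^{-T}.
\]
Put $M=(F^{-1}H)^T$, so that $\tr M=\theta_H$. Since $H\mapsto M$ is invertible and $JF^{-T}(\cdot)F^{-T}$ is invertible, it suffices to show that $M\mapsto(\tr M)I-M$ is injective.
\begin{itemize}
\item If $(\tr M)I=M$, taking traces gives $3\tr M=\tr M$.
\item Hence $\tr M=0$, and then $M=0$.
\end{itemize}
As an alternative check, $\cof(\cof F)=JF$ on $\GLp$, so $\cof$ is a local diffeomorphism there.

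\emph{Second derivative.} Each entry of $\cof F$ is a signed $2\times2$ minor, so $\cof$ is a quadratic form $Q$ on $\mathbb M$. For any quadratic polynomial map, the second derivative is the constant symmetric bilinear form given by polarization:
\[
D^2Q[H,L]=Q(H+L)-Q(H)-Q(L).
\]
To see this, expand $Q(F+tH+sL)$ and read off the $ts$ coefficient. By the definition \eqref{eq:cross-definition}, this polarized form is exactly $H\times L$. The form is independent of $F$ because the map is quadratic.

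\emph{Determinant relation.} The cleanest route is to differentiate the identity $DJ(F)[L]=C:L$ once more in direction $H$. This identity follows from $J\theta_L=JF^{-T}:L$ on $\GLp$. Differentiating gives
\[
D^2J(F)[H,L]=K_F(H):L.
\]
That form is not obviously $F:(H\times L)$, so I will use a homogeneity argument instead:
\begin{itemize}
\item Euler's identity for the cubic $J$ gives $DJ(F)[F]=3J$, and $F:\cof F=3J$.
\item For a cubic, $D^2J(F)[H,L]$ is linear in $F$ and equals the full polarization $6\,\mathcal J(F,H,L)$ of the symmetric trilinear form $\mathcal J$ with $J(F)=\mathcal J(F,F,F)$.
\item Similarly $F:\cof G$ is linear in $F$ and quadratic in $G$, and $F:\cof F=3\mathcal J(F,F,F)$. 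By the column expansion $\det[a,b,c]=a\cdot(b\times_{\R^3}c)$, the pairing $F:\cof G$ equals $3\mathcal J(F,G,G)$.
\item Polarizing in $G$ gives $F:(H\times L)=6\mathcal J(F,H,L)$, which matches $D^2J(F)[H,L]$.
\end{itemize}

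\emph{Main obstacle.} The key step is the identity $F:\cof G=3\mathcal J(F,G,G)$ for general $F$ and $G$, not just $F=G$. I would verify it by multilinearity on columns. Writing $J(F)=f_1\cdot(f_2\times_{\R^3}f_3)$, the first column of $\cof G$ is $g_2\times_{\R^3}g_3$, and cyclically for the others. The pairing $\sum_i f_i\cdot(g_{i+1}\times_{\R^3}g_{i+2})$ is then exactly the sum of the three terms of the trilinear expansion in which one slot holds $F$. Alternatively, one can verify \eqref{eq:det-cross} directly on $\GLp$ from \eqref{eq:det-second} using the column formula for $H\times L$. Both sides are polynomial, so the identity extends to all $F$ by density of $\GLp$.
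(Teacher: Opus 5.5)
Your argument is correct once one algebraic slip is repaired. For \eqref{eq:dcof}, \eqref{eq:d2cof} and \eqref{eq:det-cross} it is the paper's argument; the genuine difference is in how you prove invertibility.

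\textbf{The slip.} Your factorization $K_F(H)=JF^{-T}\{(\tr(F^{-1}H))I_3-(F^{-1}H)^T\}F^{-T}$ carries a spurious right factor $F^{-T}$. At $F=2I_3$ it returns $\tr H\,I_3-H^T$ instead of the correct $2(\tr H\,I_3-H^T)$. Since $F^{-T}(F^{-1}H)^T=F^{-T}H^TF^{-T}$, the correct identity is $K_F(H)=JF^{-T}\{(\tr M)I_3-M\}$ with $M=(F^{-1}H)^T$. Your trace count then proves injectivity unchanged.

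\textbf{Invertibility.} The paper uses $\det C=J^2$ and $\cof C=JF$ to write the smooth left inverse $F=\cof C/\sqrt{\det C}$ on $\GLp$, and differentiates it. You instead factor $K_F$ through $M\mapsto(\tr M)I_3-M$ and kill its kernel by taking traces. Your route is purely linear-algebraic and even gives the inverse explicitly: $N=(\tr M)I_3-M$ forces $\tr N=2\tr M$, hence $M=\tfrac12(\tr N)I_3-N$. The paper's route shows directly that $\cof$ is a diffeomorphism of $\GLp$ onto its image. That viewpoint is reused when $\bar I_2$ is treated as a Flory function of $C$ in \cref{app:material-calculus}.

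Your side remark via $\cof(\cof F)=JF$ needs one more line. Differentiating gives $D\cof(C)[K_F(H)]=J(H+\theta_HF)$, and the rank-one term must still be disposed of by the same trace argument. Dividing by $\sqrt{\det C}$, as the paper does, makes $\Phi\circ\cof$ exactly the identity and avoids this.

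\textbf{The determinant relation.} The paper simply reads off the $st$ coefficient of $\det(F+sH+tL)$. This implicitly uses $\det(A+B)=\det A+\cof A{:}B+\cof B{:}A+\det B$. Your identity $F{:}\cof G=3\mathcal J(F,G,G)$, checked column by column, is exactly that step made explicit. It holds on all of $\mathbb M$.

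\textbf{A minor point.} $\GLp$ is open but not dense in $\mathbb M$; its closure is $\{\det\ge0\}$. Your extension remark should therefore appeal to polynomials agreeing on a nonempty open set. No extension is needed anyway, since the lemma is stated on $\GLp$.
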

\begin{proof}
Differentiate $JF^{-T}$ to obtain \eqref{eq:dcof}. The cofactor is a
homogeneous quadratic map, so its second derivative is its bilinear
polarization. For $C=\cof F$, the identities
$\det C=J^2$ and $\cof C=JF$ give the smooth inverse
$F=\cof C/\sqrt{\det C}$ on $\GLp$. Differentiating the inverse
identity proves invertibility of $K_F$. Finally, the determinant is cubic;
the coefficient of the mixed product $st$ in $\det(F+sH+tL)$ gives
\eqref{eq:det-cross}.
\end{proof}

\subsection{Reference cells and three separate function spaces}

Let $\Omega_h\subset\R^3$ be a connected bounded Lipschitz reference
domain, partitioned into conforming cells $K_e$, $e=1,\ldots,m$.
Each cell is the image of a fixed parent cell $\widehat K_e$ under an
orientation-preserving, one-to-one map $X_e$ with bounded first
derivatives and inverse derivatives. The cell maps agree on the shared
physical faces, with the stated trace parameterizations.
The subscript $h$ identifies the mesh. Quantitative shape assumptions
are supplied in the results that claim uniformity under refinement.

Three spaces have distinct roles. The geometry maps $X_e$ are fixed.
The local variation space $\widehat V_e$ consists of vector fields on
$\widehat K_e$, before any displacement boundary elimination.
The local potential space $\widehat Z_e$ supplies auxiliary vector fields.
A fixed finite-dimensional global candidate space $\mathcal Z_h$ consists
of continuous, piecewise differentiable potentials whose pullbacks belong
to the local potential spaces. Its constants are included unless a
different restriction is explicitly imposed. Geometry, variation, and
potential spaces need not coincide.

Let $y_h:\Omega_h\to\R^3$ be the current deformation and let
$u_h,v_h$ be admissible conforming displacement variations.
Write $y_e=y_h\circ X_e$, $u_e=u_h\circ X_e$, and $v_e=v_h\circ X_e$.
The variations have $u_e,v_e\in\widehat V_e$; the essential boundary
conditions restrict their global assembly.
For a fixed potential $z_h\in\mathcal Z_h$, put $z_e=z_h\circ X_e$.
With $D_\xi$ denoting the parent derivative, define
$J_{X,e}=D_\xi X_e$, $J_{y,e}=D_\xi y_e$, and similarly $J_{u,e}$,
$J_{v,e}$, and $J_{z,e}$. The physical gradients are
\begin{equation}
F=J_yJ_X^{-1},\qquad H=J_uJ_X^{-1},\qquad L=J_vJ_X^{-1},
\qquad R_z=\nabla_Xz_h=J_zJ_X^{-1}.
\label{eq:parent-physical}
\end{equation}
Cell indices may be suppressed in local formulas. Each gradient has
vector components in its rows and differentiation coordinates in its
columns. The potential and $R_z$ remain fixed while the mechanical
deformation is differentiated. The physical identity potential is
$\iota(X)=X$; its parent pullbacks are $X_e$.
The current deformation is assumed to have $\det F>0$ wherever the
energy is evaluated. Global injectivity is verified separately for the
explicit deformation families.

An essential boundary portion $\Gamma_D$ prescribes zero displacement
variations. The remaining free external trace is denoted by $\Gamma$.
A full clamp has $\Gamma_D=\partial\Omega_h$.
The global variation space is a finite-dimensional conforming subspace
of $H^1(\Omega_h;\R^3)$, with the prescribed zero trace and piecewise
bounded gradients. The physical norm is
\[
\|u_h\|_{H^1}^2=\int_{\Omega_h}
 \bigl(|u_h|^2+\|\nabla_Xu_h\|_F^2\bigr)\,dX.
\]
A uniform coercivity statement will explicitly require a fixed
Poincar\'e inequality
$\|u_h\|_{L^2}\le C_P\|\nabla_Xu_h\|_{L^2}$
\citep{brezis2011functional}.
Removing constant nodal fields on one finite mesh does not alone give
a mesh-independent $C_P$.

\subsection{Cubature and the polarized defect}

For a parent-cell integrand $f$, define
\[
\mathcal I_e[f]=\int_{\widehat K_e}f(\xi)\,d\xi,\qquad
\mathcal Q_e[f]=\sum_{q=1}^{n_e}w_{eq}f(\xi_{eq}),\qquad
\mathcal L_e=\mathcal Q_e-\mathcal I_e.
\]
The points and weights are fixed independently of deformation and
potential coefficients. Positive weights are assumed in the positivity
results. Gradient values at a point are evaluated from its indicated cell,
including when two physical quadrature locations coincide. Write
$V_e=\mathcal I_e[\det J_{X,e}]>0$ for exact reference volume and
$V_e^Q=\mathcal Q_e[\det J_{X,e}]>0$ for its quadrature counterpart.
The superscript distinguishes these weights when they differ.

For the tetrahedral specializations, use
\[
\That=\{\xi\in\R^3:\xi_i\ge0,\ \xi_1+\xi_2+\xi_3\le1\},
\qquad |\That|=1/6.
\]
Write $\mathbb P_r$ for scalar polynomials of total degree at most $r$.
The ten-node quadratic tetrahedron, or Tet10, uses $\mathbb P_2$
interpolation at four vertices and six edge midpoints.
Its barycentric coordinates are
$\lambda_0=1-\xi_1-\xi_2-\xi_3$ and $\lambda_i=\xi_i$ for $i=1,2,3$.
The symmetric four-point rule $\Qfour$ has barycentric nodes given by
the four permutations of
\[
\left(\frac{5+3\sqrt5}{20},\frac{5-\sqrt5}{20},
      \frac{5-\sqrt5}{20},\frac{5-\sqrt5}{20}\right)
\]
and weight $1/24$ at each point. It is exact through total degree two
\citep{shunn2012symmetric}. Tensor-product spaces and their Gauss rules
are specified in \cref{sec:compatible-kernels}; $\Qfour$ always denotes
the tetrahedral rule.

Numerical integration and curved isoparametric geometry are classical
sources of variational consistency errors
\citep{ciarlet1972curved,lenoir1986optimal}. Here the error functional is
applied to a specified cofactor form, with the spaces and rules held fixed.
For parent vector fields define
\begin{equation}
\mathcal T_e(z;u,v)
 =\mathcal L_e\bigl[D_\xi z:(D_\xi u\times D_\xi v)\bigr].
\label{eq:common-trilinear-defect}
\end{equation}
It is symmetric and trilinear in the three fields. Determinant
polarization gives
\[
D^2\{\mathcal L_e[\det D_\xi y]\}[u,v]
 =\mathcal T_e(y;u,v).
\]
The companion RUPA study uses the current deformation in this slot to
analyze the original nonlinear volume constraint \citep{liu2026factor}.
For a fixed potential, cofactor quadraticity instead gives
\[
D^2\{\mathcal L_e[D_\xi z:\cof D_\xi y]\}[u,v]
 =\mathcal T_e(z;u,v).
\]
This common algebra has the classical determinant and null-Lagrangian
foundation; the discrete compatibility spaces and their consequences are
the objects studied here.

\begin{theorem}[Determinant and cofactor pullbacks]
\label{thm:dual-pullback}
At a fixed parent point, the physical and parent gradients in
\eqref{eq:parent-physical} satisfy
\begin{align}
\det J_X\,D^2J(F)[H,L]
 &=D^2\det(J_y)[J_u,J_v],\label{eq:det-parent}\\
\det J_X\,(H\times L)
 &=(J_u\times J_v)J_X^T.\label{eq:cof-parent}
\end{align}
Consequently
\[
\det J_X\,R_z:(H\times L)=J_z:(J_u\times J_v).
\]
\end{theorem}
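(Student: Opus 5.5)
The plan is to derive both identities from multiplicativity of the determinant and of the cofactor, and then differentiate twice in the mechanical slot.

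Fix the parent point and set $A=J_X^{-1}$, so that $F=J_yA$, $H=J_uA$ and $L=J_vA$. The determinant identity comes first. For scalars $s,t$,
\[
\det(F+sH+tL)=\det\bigl((J_y+sJ_u+tJ_v)A\bigr)=\det A\,\det(J_y+sJ_u+tJ_v).
\]
Taking the mixed coefficient of $st$ on both sides, exactly as in the proof of \cref{lem:cofactor-derivatives}, gives $D^2J(F)[H,L]=\det A\,D^2\det(J_y)[J_u,J_v]$. Multiplying by $\det J_X=1/\det A$ yields \eqref{eq:det-parent}.

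For the cofactor identity, I would use the multiplicativity $\cof(MN)=\cof M\,\cof N$. This holds for all $3\times3$ matrices: it is a polynomial identity that is true on the dense set of invertible matrices, since $\cof M=\det M\,M^{-T}$ there. Together with $\cof A=\det A\,A^{-T}=\det A\,J_X^{T}$, it gives $\cof(MA)=\det A\,\cof(M)\,J_X^T$ for every $M$. Now polarize using \eqref{eq:cross-definition}. Apply this to $M=J_u+J_v$, $M=J_u$ and $M=J_v$, and subtract; by linearity of the right-hand side in $\cof M$,
\[
H\times L=(J_u\times J_v)A\times\!\!\!\!\quad\text{— more precisely}\quad (J_uA)\times(J_vA)=\det A\,(J_u\times J_v)J_X^T .
\]
Multiplying by $\det J_X$ gives \eqref{eq:cof-parent}. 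This step needs no invertibility of $J_u$ or $J_v$, which is why I route it through the polynomial identity rather than through the formula \eqref{eq:dcof}.

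For the consequence, take the Frobenius product of \eqref{eq:cof-parent} with $R_z=J_zA$ and use $P:(QB)=(PB^T):Q$:
\[
\det J_X\,R_z:(H\times L)=J_zA:\bigl((J_u\times J_v)J_X^T\bigr)=J_zAJ_X:(J_u\times J_v)=J_z:(J_u\times J_v).
\]

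The only step needing care is extending $\cof(MN)=\cof M\cof N$ to singular factors, since $J_u$ and $J_v$ are arbitrary. I would handle it by density and continuity of the polynomial map, or equivalently by the Cauchy--Binet formula for $2\times2$ minors. The remaining transpose bookkeeping, where $\cof A$ contributes $J_X^T$ rather than $J_X^{-T}$, I would check directly. As a consistency check, \eqref{eq:det-parent} should also follow from \eqref{eq:cof-parent} together with \eqref{eq:det-cross}, taking $z=y$ in the consequence.
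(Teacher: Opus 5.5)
Your proposal is correct and is essentially the paper's argument: multiplicativity of $\det$ and $\cof$ along the pencil $J_y+sJ_u+tJ_v$, the mixed $st$ term (your polarization) for the cross product, and the transpose identity for the Frobenius consequence. The only difference is bookkeeping: you factor through $A=J_X^{-1}$ and use $\cof A=\det A\,J_X^{T}$, whereas the paper writes $J_u\times J_v=(H\times L)\cof J_X$ and multiplies by $J_X^T$ using $(\cof J_X)J_X^T=(\det J_X)I_3$; this is the same computation.

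Do tidy the garbled cofactor display. Its first equality $H\times L=(J_u\times J_v)A$ is false as written, and only the corrected identity $(J_uA)\times(J_vA)=\det A\,(J_u\times J_v)J_X^T$ is used.
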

\begin{proof}
Apply determinant multiplicativity to
$J_y+sJ_u+tJ_v=(F+sH+tL)J_X$ and differentiate once in each scalar.
For the cofactor identity, use
$\cof(AB)=(\cof A)(\cof B)$ and take the same mixed derivative.
This gives $J_u\times J_v=(H\times L)\cof J_X$.
Multiplication by $J_X^T$, using
$\cof J_XJ_X^T=(\det J_X)I_3$, proves the second formula.
The final Frobenius product follows from $R_zJ_X=J_z$.
\end{proof}

\subsection{Material energy and the meaning of a gauge}

Define the modified invariants by
\begin{equation}
\bar I_1=J^{-2/3}I_1,\quad \bar I_2=J^{-4/3}I_2.
\label{eq:modified-invariants}
\end{equation}
With cell coefficients $c_{1,e},c_{2,e}\ge0$ and
$c_{1,e}+c_{2,e}>0$, use the isochoric Mooney--Rivlin density
\begin{equation}
W_{{\rm iso},e}(F)
 =c_{1,e}(\bar I_1-3)+c_{2,e}(\bar I_2-3).
\label{eq:mooney-energy}
\end{equation}
The two invariants remove uniform dilation from the line and area measures
\citep{mooney1940theory,rivlin1948large}. The reference shear modulus is
$\mu_e=2(c_{1,e}+c_{2,e})$. The optimal-margin theorem will impose a
common positive sum across cells; the later reference-coercivity result
permits heterogeneous sums with a common positive lower bound.

For a coefficient $R\in\mathbb M$ held fixed in a variation, the translated
material form is
\[
\mathcal G_R(F)[H,L]=D^2W_{\rm iso}(F)[H,L]+R:(H\times L).
\]
We call a reference form of this type a cofactor gauge. The same cofactor
term is subtracted when reconstructing the complete Hessian.
A gauge's positive definiteness is therefore a property of the chosen
reference form, with its compensation still part of the equation.
A rank-one increment has the form $H=a\otimes n$, where
$(a\otimes n)_{ij}=a_in_j$. It has zero cofactor, so the added quadratic
value vanishes. Strict rank-one ellipticity means
$D^2W_{\rm iso}(F)[H,H]>0$ for every nonzero rank-one $H$.
Positive gauges provide sufficient rank-one ellipticity
certificates through the classical minor-translation mechanism
\citep{ball1976convexity,sivaloganathan1989hamilton}.
No converse for arbitrary constitutive laws is assumed.

The first-minor reference used in the finite-strain certificates is
\begin{align}
Q_F(H,L)
 &=
 \left(H-\frac23\theta_HF\right):
 \left(L-\frac23\theta_LF\right)
 +\frac{I_1}{9}\theta_H\theta_L,\label{eq:qf}\\
A_1(F)[H,L]&=2J^{-2/3}Q_F(H,L),\label{eq:a1}\\
\pi_1(F)&=\frac{2I_1}{3J^{5/3}}.\label{eq:pi1}
\end{align}
The sum of squares makes $Q_F$ positive definite.
The Flory first-minor identity, also used by RUPA \citep{liu2026factor}, is
\begin{equation}
D^2\bar I_1(F)=A_1(F)-\pi_1(F)D^2J(F).
\label{eq:first-invariant-import}
\end{equation}
Its short derivation and the full second-minor chain rule are given
in \cref{app:material-calculus}, so the present paper is self-contained
in these constitutive calculations. At rest define the fixed metric
\begin{equation}
\mathcal A[H,L]=A_1(I_3)[H,L]
 =2H:L+\frac23(\tr H)(\tr L).
\label{eq:rest-material-metric}
\end{equation}
It has eigenvalue two on trace-free matrices and four on the identity
direction in the Frobenius metric.

\subsection{Complete cell-pressure terms}

Use one constant pressure per cell, denoted by $P_0$, and define the
quadrature volume change by
$c_e(y_h)=\mathcal Q_e[\det J_{y,e}-\det J_{X,e}]$.
For a bulk modulus $\kappa_e\ge0$, the complete stored energy is
\begin{equation}
E_h(y_h)
 =\sum_e\mathcal Q_e[\det J_{X,e}\,W_{{\rm iso},e}(F)]
  +\sum_e\frac{\kappa_e}{2V_e^Q}c_e(y_h)^2,\qquad
K_h(y_h)=D^2E_h(y_h).
\label{eq:complete-pressure-energy}
\end{equation}
For positive bulk modulus, the penalty is stationary elimination of a
cell pressure from
$p_ec_e-V_e^Qp_e^2/(2\kappa_e)$. It gives
$p_e=\kappa_ec_e/V_e^Q$ and, for nodal directions $u,v$,
\begin{align}
Q_{P0,e}[u,v]&=\frac{\kappa_e}{V_e^Q}Dc_e[u]Dc_e[v],
 \label{eq:qp0-import}\\
P_{P0,e}[u,v]&=p_eD^2c_e[u,v].
 \label{eq:pp0-import}
\end{align}
Both terms follow by differentiating $c_e^2/2$.
The zero-bulk case contributes neither term. At nonzero pressure the
second term is signed; it remains part of every complete operator below.
The same energy-level distinction is developed in RUPA
\citep{liu2026factor}. Contact, inertia, and external-load tangents would
be additional specified terms and are not silently absorbed into a
stored-energy positivity claim.

For a symmetric finite matrix $A$, write
$\operatorname{In}(A)=(n_+(A),n_-(A),n_0(A))$ for its positive, negative,
and zero eigenvalue counts, with multiplicity. Its nonpositive index is
$n_-(A)+n_0(A)$. Computed sparse inertia is a numerical estimate, whose interpretation
is stated with each comparison. A later boundary representation controls these counts only
when its stated reference form is positive.

\subsection{The complete construction}
\label{sec:gauge-design-levels}

A compatible potential makes the added cofactor Hessian exact on every
local variation pair. With the spaces and defect defined above, set
\begin{equation}
\widehat Z_e^{\,0}
=\left\{z\in\widehat Z_e:
\mathcal T_e(z;u,v)=0
\quad\hbox{for every }u,v\in\widehat V_e\right\},
\label{eq:local-compatible-kernel}
\end{equation}
where \(\mathcal T_e\) is the fixed cubature defect
\eqref{eq:common-trilinear-defect}.
These tests use the unrestricted local variation space, before displacement
boundary conditions are imposed. The continuous global exact class is
\begin{equation}
\mathcal Z_h^0
=\{z_h\in\mathcal Z_h:z_h\circ X_e\in\widehat Z_e^{\,0}
                         \text{ for every }e\}.
\label{eq:global-compatible-kernel}
\end{equation}
Assembly may have additional cancellations. They do not enlarge the
cellwise class in \eqref{eq:local-compatible-kernel}.

For a fixed continuous \(z_h\), define the exact and numerical
cofactor functionals and their global Hessian matrices by
\[
\begin{aligned}
 \mathscr J_z^I(y_h)&=\sum_e\mathcal I_e[J_{z,e}:\cof J_{y,e}],
 &\mathscr J_z^Q(y_h)&=\sum_e\mathcal Q_e[J_{z,e}:\cof J_{y,e}],\\
 \mathcal C_h^r[z_h]&=D^2\mathscr J_z^r,\qquad r\in\{I,Q\}.
\end{aligned}
\]
The matrices act on the actual global displacement variations.
They are independent of \(y_h\), since the cofactor is quadratic and
the potential is fixed. Their difference is assembled from
\(\mathcal T_e(z_e;u_e,v_e)\).
For \(z_h\in\mathcal Z_h^0\), they coincide, and the reference is
\[
 G_{z,h}(y_h)=K_h(y_h)+\mathcal C_h^Q[z_h].
\]
Let \(R_\Gamma\) extract the free external displacement trace.
The boundary theorem below identifies a fixed symmetric trace matrix
\(B_\Gamma[z_h]\) and proves the complete representation
\[
 \mathcal C_h^Q[z_h]=R_\Gamma^TB_\Gamma[z_h]R_\Gamma,
 \qquad
 K_h=G_{z,h}-R_\Gamma^TB_\Gamma[z_h]R_\Gamma
 \quad(z_h\in\mathcal Z_h^0).
\]
Thus the same term is added to form the reference and subtracted to
recover the original tangent. The potential's full boundary values
determine the exact added operator; the displacement's free trace
determines where the compensation acts. Equal potential traces give
equal exact assembled gauges. With a full displacement clamp,
\(R_\Gamma=0\) and \(G_{z,h}=K_h\).
The complete proof and the extension to nonzero quadrature defects
are in \cref{sec:complete-reference-forms}.

\paragraph{A concrete compatible choice.}
Suppose the reference maps are quadratic tensors, the full local
variations have coordinate degree \(p_e\ge2\), and each coordinate
has at least \(p_e+1\) Gauss points. Let the candidate space contain
\(\iota(X)=X\), and assume the common material sum is \(c>0\).
At the stress-free state, choose \(z_h=2c\iota\).
Its physical coefficient is \(R_z=2cI_3\), and
\[
 D^2W_{{\rm iso},e}(I_3)[H,H]+R_z:(H\times H)
 =c\mathcal A[H,H].
\]
This is \eqref{eq:stress-free-translation-identity}, derived below.
The pullback \(2cX_e\) is compatible by the tensor kernel.
Every recovered pressure is zero, so the complete reference adds the
nonnegative cell-pressure outer term to the assembled material metric.
\Cref{thm:q2-uniform-coercivity} supplies the physical \(H^1\) estimate
under its whole-cell geometry and Poincar\'e assumptions.

\begin{figure}[H]
\centering
\begin{tikzpicture}[
 box/.style={draw=blue!45!black,rounded corners=2pt,
 fill=blue!3,align=center,text width=4.05cm,minimum height=1.55cm,
 inner sep=5pt,font=\small},
 >=Stealth,node distance=.55cm]
 \node[box] (space) {Compatible potential\\
 \(z_h\in\mathcal Z_h^0\)\\
 all local variation pairs};
 \node[box,right=of space] (margin) {Material certificate\\
 \(\gamma_h(z)\) at rest\\
 all matrix increments};
 \node[box,right=of margin] (form) {Complete reference\\
 \(G_{z,h}=K_h+\mathcal C_h^Q[z_h]\)\\
 actual global variations};
 \draw[->,thick] (space)--(margin);
 \draw[->,thick] (margin)--(form);
 \node[below=5pt of margin,align=center,font=\footnotesize,text width=14cm]
 {Physical coercivity also requires geometry, boundary and pressure control.\\
 Exact compensation retains \(K_h=G_{z,h}-R_\Gamma^TB_\Gamma[z_h]R_\Gamma\).};
\end{tikzpicture}
\caption{The three levels of the design problem for a chosen potential.
Local exactness restricts the available fields; the material margin
measures a finite-point certificate; the complete reference includes
all pressure terms and its exact boundary compensation.
The diagram summarizes mathematical dependencies and specifies no
numerical optimizer.}
\label{fig:reference-design}
\end{figure}

The material margin of a chosen potential, denoted by \(\gamma_h(z)\)
at rest, tests all matrix increments at the selected quadrature points relative
to \(\mathcal A\). Positive weights transfer a positive margin to the
assembled material-plus-gauge form. Physical coercivity additionally
needs geometric norm control, suitable boundary conditions and the
complete pressure contributions.
A material-margin comparison therefore does not by itself order
assembled eigenvalues, condition numbers or solve times.

\section{The compatible potential spaces}
\label{sec:compatible-kernels}

The local and global exact classes are defined by
\eqref{eq:local-compatible-kernel}--\eqref{eq:global-compatible-kernel}.
We now identify them for the actual polynomial spaces and Gauss rules.

This class has a finite linear description. If \(\psi_a\) is a basis of
\(\widehat Z_e\) and \(\varphi_i\) is a basis of \(\widehat V_e\), the
matrix with entries
\[
(\mathcal D_e)_{(i,j),a}
=\mathcal T_e(\psi_a;\varphi_i,\varphi_j),\qquad i\le j,
\]
maps potential coefficients to their symmetric Hessian-defect coefficients.
Its kernel represents \(\widehat Z_e^{\,0}\).
The following results identify that kernel analytically for specified
polynomial spaces and Gauss rules.

The meaning here is Hessian exactness. For a fixed potential \(z\), let
\(\Lambda_z(y)=\mathcal L_e[J_z:\cof J_y]\), where
\(J_z=D_\xi z\) and \(J_y=D_\xi y\).
Its Hessian on \(\widehat V_e\) is \(\mathcal T_e(z;\cdot,\cdot)\).
If every current map \(y\) belongs to the linear space \(\widehat V_e\),
cofactor homogeneity gives
\[
\Lambda_z(y)=\tfrac12\mathcal T_e(z;y,y)=0
\qquad(z\in\widehat Z_e^{\,0}).
\]
On an affine current-map family \(y_0+\widehat V_e\) with
\(y_0\notin\widehat V_e\), vanishing Hessian instead leaves possible
constant and linear terms. Equality of the full functionals then needs
those terms to be checked separately.

\subsection{Tensor Gauss rules at every polynomial order}

Use the parent cube \(\widehat K=[-1,1]^3\).
The scalar space \(\mathbb Q_p\) consists of polynomials of degree at most
\(p\) in each coordinate. The tensor rule \(\mathcal G_n\) has \(n\)
Gauss--Legendre points per coordinate and integrates coordinate degree
at most \(2n-1\) exactly. Cell indices will be suppressed in the local
pairing. The kernel concerns candidates in the stated finite-dimensional
space, rather than potentials of arbitrary degree.

\begin{theorem}[Tensor compatible-potential kernel]
\label{thm:tensor-compatible-kernel}
Let \(p\ge2\), \(n\ge p+1\), and take
\(\widehat V=\widehat Z=\mathbb Q_p^3\), with
\(\mathcal Q=\mathcal G_n\). Put \(r=\min\{p,2(n-p)\}\).
Then
\begin{equation}
\widehat Z^{\,0}
=\{z\in\mathbb Q_p^3:
       \mathcal T(z;u,v)=0\ \hbox{for all }u,v\in\mathbb Q_p^3\}
=\mathbb Q_r^3.
\label{eq:tensor-compatible-kernel}
\end{equation}
For \(p=1\) and \(n\ge2\), the entire space \(\mathbb Q_1^3\) is compatible.
\end{theorem}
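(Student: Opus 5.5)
The proof splits into sufficiency (\(\mathbb Q_r^3\subseteq\widehat Z^{\,0}\)) by degree counting, and necessity by building test pairs whose defect isolates one leading Gauss moment.

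\emph{Expanding the integrand.} By the column formula after \eqref{eq:cross-definition}, \(D_\xi z:(D_\xi u\times D_\xi v)\) is the polarization of a \(3\times3\) determinant. It is a sum of terms \(\pm\,\partial_i z_a\,\partial_j u_b\,\partial_k v_c\), where \(\{i,j,k\}=\{1,2,3\}\) and \(\{a,b,c\}=\{1,2,3\}\). Equivalently,
\[
D_\xi z:(D_\xi u\times D_\xi v)=\sum_{\{a,b,c\}}\pm\det\bigl(\nabla z_a,\nabla u_b,\nabla v_c\bigr),
\]
summed over ordered distinct component triples. Since \(\mathcal G_n\) is a tensor rule, \(\mathcal L\) annihilates every polynomial of degree at most \(2n-1\) in each coordinate separately.

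\emph{Sufficiency.} Take \(z\in\mathbb Q_r^3\) and \(u,v\in\mathbb Q_p^3\), and fix coordinate \(\xi_1\). Each term contains exactly one factor differentiated in \(\xi_1\).
\begin{itemize}
\item If that factor is \(z\), the \(\xi_1\)-degree is at most \((r-1)+2p\).
\item Otherwise it is at most \(r+(p-1)+p\).
\end{itemize}
Both bounds equal \(r+2p-1\), which is at most \(2n-1\) exactly when \(r\le2(n-p)\); the constraint \(r\le p\) is just membership in \(\widehat Z\). So \(\mathcal T(z;u,v)=0\). For \(p=1\) and \(n\ge2\), the same count gives degree at most \(2\le2n-1\), so all of \(\mathbb Q_1^3\) is compatible.

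\emph{Necessity: reduction.} Suppose \(z\in\mathbb Q_p^3\setminus\mathbb Q_r^3\). By symmetry of the cube, some component, say \(z_1\), has \(\xi_1\)-degree \(s\) with \(r<s\le p\), so \(s\ge 2(n-p)+1\) (the case \(r=p\) is vacuous). Test with \(u=e_2f\) and \(v=e_3g\), with scalars \(f,g\in\mathbb Q_p\). Then only \(a=1\) survives and the integrand reduces to \(\det(\nabla z_1,\nabla f,\nabla g)\).
\begin{itemize}
\item Choose the \(\xi_1\)-degrees of \(f,g\) (at most \(p\)) so that the top \(\xi_1\)-degree of this Jacobian is exactly \(2n\). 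This is possible because \(s+2p-1\ge2n\), and lowering the degrees of \(f\) and \(g\) lowers the total in unit steps.
\item Keep the transverse dependence of \(f,g\) of low degree (affine or quadratic in \(\xi_2,\xi_3\)), so that the transverse one-dimensional rules are exact.
\end{itemize}
Since \(\mathcal L\) kills everything below \(\xi_1\)-degree \(2n\), the defect becomes
\[
\mathcal T(z;u,v)=e_n\,\mathcal I_{\xi_2\xi_3}\bigl[c_{2n}(\xi_2,\xi_3)\bigr],
\]
where \(e_n\neq0\) is the one-dimensional Gauss error on \(\xi_1^{2n}\) (the leading Gauss moment), and \(c_{2n}\) is the \(\xi_1^{2n}\)-coefficient. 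That coefficient is a \(2\times2\) Jacobian in \((\xi_2,\xi_3)\) of the leading \(\xi_1\)-coefficients of \(z_1,f,g\), plus mixed terms.

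\emph{Main obstacle: transverse cancellation.} A transverse Jacobian integrates over \([-1,1]^2\) to a boundary expression, so a naive choice can give zero. To prevent this I would:
\begin{itemize}
\item use the divergence structure to rewrite \(\mathcal I_{\xi_2\xi_3}[c_{2n}]\) as a combination of traces of the leading coefficient of \(z_1\) on the edges \(\xi_2=\pm1\) and \(\xi_3=\pm1\), weighted by the chosen transverse profiles of \(f,g\) (the transverse boundary moments);
\item first try \(f=\xi_1^{k}\xi_2\) and \(g=\xi_1^{l}\xi_3\times(\text{a transverse monomial }\xi_2^\alpha\xi_3^\beta)\), and vary \(\alpha,\beta\le p\);
\item argue that if all these moments vanish, the leading \(\xi_1^s\)-coefficient of \(z_1\), a polynomial in \(\mathbb Q_p\) of \((\xi_2,\xi_3)\), is orthogonal to a spanning family and hence vanishes up to a term constant in the transverse variables;
\item eliminate that remaining term with the swapped assignment (\(\xi_1\)-degree carried by \(f\) rather than \(g\)), using the other coordinate placements of the single \(\xi_1\)-derivative.
\end{itemize}
This contradicts \(\deg_{\xi_1}z_1=s\) and gives \(\widehat Z^{\,0}=\mathbb Q_r^3\). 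The delicate bookkeeping — ensuring the mixed lower-degree terms produce no competing \(\xi_1^{2n}\) contributions, and that the chosen moments really span — is where I expect most of the work.
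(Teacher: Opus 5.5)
\textbf{Verdict: genuine gap in necessity.} Your sufficiency count, the reduction to one scalar component via \(u=e_2f\), \(v=e_3g\), and the isolation of the first missed Gauss moment \(\xi_1^{2n}\) are correct and match the paper. The gap is the step you label the main obstacle. It is the entire content of necessity, you do not carry it out, and the mechanism you propose does not work as stated.

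\emph{The transverse integral is not a boundary expression.} Write \(z_1=\xi_1^sH+\dots\), \(f=\xi_1^bF\), \(g=\xi_1^cG\) with \(s+b+c=2n+1\), and \(\partial_i=\partial/\partial\xi_i\). The \(\xi_1^{2n}\) coefficient is \(sH\{F,G\}-bF\{H,G\}+cG\{H,F\}\), where \(\{A,B\}=\partial_2A\,\partial_3B-\partial_3A\,\partial_2B\). Only \(\{F,G\}\) by itself is a divergence. Integrating by parts gives \((s+b+c)\int H\{F,G\}\) plus edge terms, so interior moments of \(H\) survive and the integral is not a combination of edge traces of \(H\).

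\emph{Your test family is too small.} At the threshold degree \(s=2(n-p)+1\) one has \(b+c=2p\), which forces \(b=c=p\). Your ``swapped assignment'' then only reverses the sign of the same functional. Your family \(F=\xi_2\), \(G=\xi_2^\alpha\xi_3^{\beta+1}\) with \(\alpha\le p\), \(\beta+1\le p\) gives only \(p(p+1)\) linear conditions on the \((p+1)^2\)-dimensional space of \(H\). Its common kernel therefore has dimension at least \(p+1\). That kernel also does not reduce to constants: a constant \(H\equiv h\) already gives \(4sh\neq0\) at \(\alpha=\beta=0\). So the structure you anticipate (``vanishes up to a constant, then swap'') is not what actually happens.

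\emph{What is missing is a way to close this transverse moment problem.} The paper does it as follows.
\begin{enumerate}
\item It takes \(b=p\) and general monomials \(F=\xi_2^j\xi_3^k\), \(G=\xi_2^{M-j}\xi_3^{N-k}\) with \(1\le M,N\le p+1\).
\item It subtracts the tests for two consecutive values of \(j\), which exist because \(p\ge2\). This cancels the \(\partial_2H\) terms and leaves \(\int\!\int\xi_2^{M-1}\xi_3^{N-1}\{sNH-(b+c)\xi_3\partial_3H\}=0\).
\item Moment uniqueness in \(\xi_2\) then gives \(\int\xi_3^N\partial_3H\,d\xi_3=\tau N\int\xi_3^{N-1}H\,d\xi_3\) pointwise in \(\xi_2\), with \(\tau=s/(b+c)>0\).
\item Separately, the \(\xi_3\)-independent tests \(f=\xi_1^bF(\xi_2)\), \(g=\xi_1^cG(\xi_2)\) have leading coefficient \(\partial_3H\,(bFG'-cGF')\). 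These weights span \(\mathbb P_p(\xi_2)\), which forces \(H(\xi_2,1)=H(\xi_2,-1)=:A(\xi_2)\).
\item Integrating the moment identity by parts gives \(\int\xi_3^jH\,d\xi_3=\frac{A}{1+\tau}\int\xi_3^j\,d\xi_3\) for \(j\le p\). Hence \(H\equiv A/(1+\tau)\).
\item Evaluating at \(\xi_3=1\) with \(\tau>0\) forces \(A=0\), contradicting \(H\neq0\).
\end{enumerate}
Without this argument, or an explicit proof that some concrete test family is nondegenerate on all of \(\mathbb Q_p(\xi_2,\xi_3)\), the reverse inclusion \(\widehat Z^{\,0}\subseteq\mathbb Q_r^3\) is not established.
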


\begin{proof}
If \(z\in\mathbb Q_s^3\) and \(u,v\in\mathbb Q_p^3\), each term of
\(D_\xi z:(D_\xi u\times D_\xi v)\) has coordinate degree at most
\(s+2p-1\). Across the three gradient factors, a determinant term
differentiates once in each coordinate. Thus the rule is exact for
\(s=r\), proving \(\mathbb Q_r^3\subset\widehat Z^{\,0}\).
This also proves the \(p=1\) assertion. If \(r=p\), the proof is complete.

Supporting \(u\) and \(v\) in two distinct physical components isolates
one scalar component \(f\) of the potential. Its pairing is
\[
\mathcal L\left[
\det\begin{pmatrix}
f_x&f_y&f_z\\ U_x&U_y&U_z\\ V_x&V_y&V_z
\end{pmatrix}\right],
\]
with scalar variations \(U,V\). Cyclic choices of the two components
isolate every component of \(z\).
It suffices to exclude a scalar leading degree above the proposed bound.

We give the coordinate argument with separate bounds
\((p_x,p_y,p_z)\) and rules \((n_x,n_y,n_z)\), all satisfying
\(n_j\ge p_j+1\). Assume \(p_y\ge2\) and \(p_z\ge1\).
For the present isotropic theorem these are the equal orders.
Suppose \(f\) has leading \(x\)-degree
\(a>r_x=2(n_x-p_x)\), and write
\[
f(x,y,z)=x^aH(y,z)+\text{terms of lower \(x\)-degree},
\]
where \(H\ne0\) has coordinate degrees at most \(p_y,p_z\).
Set
\[
b=p_x,\qquad c=2n_x+1-p_x-a,\qquad
\tau=\frac{a}{b+c}>0.
\]
Because \(r_x<a\le p_x\), one has \(3\le r_x+1\le c\le b\) and
\(a+b+c=2n_x+1\).

Choose integer totals \(1\le M\le p_y+1\) and \(1\le N\le p_z+1\), and
use
\[
U=x^by^jz^k,\qquad V=x^cy^{M-j}z^{N-k},
\]
with all exponents of each variation within their coordinate bounds.
There are two consecutive admissible choices for \(j\), because
\(p_y\ge2\); at least one admissible split \(k\) exists.
All transverse integrands have degrees at most \(2p_y,2p_z\) and are
integrated exactly. Lower \(x\)-terms of \(f\) give degree at most
\(2n_x-1\). The leading term alone reaches \(x^{2n_x}\).
If \(\pi_{n_x}\) is the monic Legendre polynomial, exactness below this
degree and its nodal zeros give
\[
\left(\mathcal G_{n_x}^{(1)}-\int_{-1}^1\right)x^{2n_x}
=-\int_{-1}^1\pi_{n_x}^2<0,
\]
where \(\mathcal G_{n_x}^{(1)}\) denotes the one-dimensional rule.
Hence the transverse coefficient of the leading term must integrate to
zero for every selected test.

For \(F=y^jz^k\) and \(G=y^{M-j}z^{N-k}\), that coefficient is
\[
\det\begin{pmatrix}
aH&H_y&H_z\\
bF&F_y&F_z\\
cG&G_y&G_z
\end{pmatrix}.
\]
Subtracting its expressions for two consecutive values of \(j\), while
keeping \(M,N,k\) fixed, gives the necessary moment identity
\[
\int_{[-1,1]^2} y^{M-1}z^{N-1}
       \{aNH-(b+c)zH_z\}\,dy\,dz=0.
\]
For fixed \(N\), the expression left after \(z\)-integration is a
polynomial in \(y\) of degree at most \(p_y\).
Its moments against \(1,y,\ldots,y^{p_y}\) vanish, so it is identically
zero: testing against the polynomial itself proves this moment
uniqueness. Consequently, for every fixed \(y\),
\begin{equation}
\int_{-1}^1z^NH_z(y,z)\,dz
=\tau N\int_{-1}^1z^{N-1}H(y,z)\,dz,
\qquad 1\le N\le p_z+1.
\label{eq:kernel-transverse-moments}
\end{equation}

A second set of tests determines the endpoint values of \(H\).
Take \(U=x^bF(y)\) and \(V=x^cG(y)\), independent of \(z\).
Their leading transverse coefficient is
\[
H_z(y,z)\{bF(y)G'(y)-cG(y)F'(y)\}.
\]
The choices \(F=1,\ G=y^{j+1}\), \(0\le j<p_y\), produce nonzero
multiples of \(1,y,\ldots,y^{p_y-1}\).
The choice \(F=y,\ G=y^{p_y}\) produces
\((bp_y-c)y^{p_y}\), with \(bp_y-c>0\).
These particular tests have transverse degree at most \(2p_y\), so
their transverse integration is exact as well.
Their vanishing moments imply
\[
H(y,1)-H(y,-1)=0
\]
as a polynomial in \(y\).
Denote the common endpoint value by \(A(y)\).
Integrating \eqref{eq:kernel-transverse-moments} by parts gives
\[
\int_{-1}^1 z^jH(y,z)\,dz
=\frac{A(y)}{1+\tau}\int_{-1}^1z^j\,dz,
\qquad 0\le j\le p_z.
\]
Moment uniqueness in \(z\) now yields
\(H(y,z)=A(y)/(1+\tau)\).
Evaluation at \(z=1\) forces \(A(y)=0\), because \(\tau>0\).
This contradicts \(H\ne0\).

The coordinate argument excludes every degree above \(r\) in each
coordinate of the isotropic candidate. Applying it to all scalar
components proves the reverse inclusion in
\eqref{eq:tensor-compatible-kernel}.
\end{proof}

For the usual \(n=p+1\) rule, the exact local potentials are precisely
\(\mathbb Q_2^3\) for every \(p\ge2\).
Additional quadrature points can enlarge this class: \(p=5,n=7\) gives
\(\mathbb Q_4^3\).
This order dependence also applies to constant physical coefficients.
If \(X_e\in\mathbb Q_p^3\) and \(R\) is a constant physical matrix,
the potential with pullback \(RX_e\) has \(R_z=R\) by
\eqref{eq:parent-physical}. Its Hessian is compatible exactly when
\[
RX_e\in\mathbb Q_r^3.
\]
Equivalently, \(R\) annihilates every coefficient of \(X_e\) whose
multi-index has a coordinate greater than \(r\).
A nonsingular \(R\) can satisfy this condition only if
\(X_e\in\mathbb Q_r^3\).

\subsection{Anisotropic orders and the one-axis exception}

Let \(\mathbb Q_{\boldsymbol p}\) denote coordinate degrees at most
\(\boldsymbol p=(p_1,p_2,p_3)\), and let
\(\mathcal G_{\boldsymbol n}\) be the corresponding tensor Gauss rule.
The candidate and variation spaces in the next theorem coincide with
\(\mathbb Q_{\boldsymbol p}^3\). For a univariate variable \(x\), write
\(\mathbb P_j(x)\) for polynomials of degree at most \(j\).

\begin{theorem}[Anisotropic compatible-potential kernel]
\label{thm:anisotropic-compatible-kernel}
Let \(p_i\ge1\), \(n_i\ge p_i+1\), and
\(r_i=\min\{p_i,2(n_i-p_i)\}\).
If at least two coordinate orders satisfy \(p_i\ge2\), then
\begin{equation}
\widehat Z^{\,0}=\mathbb Q_{(r_1,r_2,r_3)}^3.
\label{eq:anisotropic-compatible-kernel}
\end{equation}
If exactly one coordinate has order greater than one, permute coordinates
so that \(\boldsymbol p=(p,1,1)\), with \(p\ge2\), and put
\(r=\min\{p,2(n_1-p)\}\). The scalar kernel is
\begin{equation}
\mathcal K_{p,n_1}
=\mathbb P_r(x)\operatorname{span}\{1,y,z\}
 \oplus \mathbb P_{\min(p,r+1)}(x)\,yz,
\qquad
\widehat Z^{\,0}=\mathcal K_{p,n_1}^{\,3}.
\label{eq:one-high-axis-kernel}
\end{equation}
If all three \(p_i=1\), the entire vector candidate space is compatible.
\end{theorem}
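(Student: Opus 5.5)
The plan reuses the proof of \cref{thm:tensor-compatible-kernel}, which was written with separate coordinate bounds \((p_x,p_y,p_z)\) and rules \((n_x,n_y,n_z)\).

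\textbf{Sufficiency.} Take \(z\in\mathbb Q_{(r_1,r_2,r_3)}^3\) and \(u,v\in\mathbb Q_{\boldsymbol p}^3\). Each term of \(D_\xi z:(D_\xi u\times D_\xi v)\) is a determinant in which each coordinate is differentiated exactly once. Its degree in coordinate \(i\) is therefore at most \(r_i+2p_i-1\le 2n_i-1\), so \(\mathcal G_{\boldsymbol n}\) integrates it exactly. The same count handles the all-linear case, since then \(r_i=1\) and \(2p_i+r_i-1=2\le 2n_i-1\).

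\textbf{Converse when two orders are at least two.} As in the isotropic proof, I support \(u,v\) in two distinct components to isolate a scalar component \(f\). The coordinate argument excludes a leading \(x\)-degree \(a>r_x\) whenever \(p_y\ge2\) and \(p_z\ge1\). It needs \(a\le p_x\), so if \(r_x=p_x\) there is nothing to exclude. Otherwise \(p_x\ge2\). To exclude excess degree in coordinate \(i\) with \(p_i\ge2\), I will label the transverse coordinates so that one of them has order at least two, which the hypothesis guarantees. The remaining transverse coordinate has order at least one, as the argument requires. Doing this for each coordinate and each component gives \eqref{eq:anisotropic-compatible-kernel}.

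\textbf{The one-high-axis case \(\boldsymbol p=(p,1,1)\).} The earlier argument needs two consecutive splits \(j\), and these are unavailable when \(p_y=1\). I will instead compute directly. Write \(f=\sum_{a}x^a(\alpha_a+\beta_ay+\gamma_az+\delta_ayz)\), with \(U,V\) bilinear in \((y,z)\) and of degree \(\le p\) in \(x\). The \(x\)-part of the defect is nonzero only on monomials \(x^{2n_1}\) or higher, and it is nonzero exactly there. The transverse integrals are exact.

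First I will show that the coefficients \(\alpha_a,\beta_a,\gamma_a\) with \(a>r\) must vanish. The \(z\)-independent tests used in the isotropic proof, now with \(F,G\in\{1,y\}\), force the relevant transverse coefficient to vanish; the symmetric \(y\leftrightarrow z\) tests do the same. For the \(yz\) term, the determinant of \(\partial(x^a yz, U, V)\) with \(U,V\) bilinear in \((y,z)\) gives transverse integrands like \(yz\cdot(\ldots)\). Every pairing that produces a nonzero transverse integral requires the \(x\)-degree of \(f\) to be at least one higher before reaching \(2n_1\). The threshold for \(\delta_a\) thus shifts to \(r+1\), capped at \(p\), which gives \(\mathbb P_{\min(p,r+1)}(x)yz\).

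\textbf{Main obstacle.} The delicate step is the exact enumeration, in the one-high-axis case, of the parity-surviving transverse integrals \(\int y^\alpha z^\beta\) over \([-1,1]^2\). From these I must read off both facts: the \(yz\) coefficients survive one degree higher, and the \(1,y,z\) coefficients do not. I would first write the \(3\times3\) determinant for monomial tests \(U=x^by^{\epsilon_1}z^{\epsilon_2}\) and \(V=x^cy^{\epsilon_3}z^{\epsilon_4}\), then tabulate which \(\epsilon\)-patterns give even total transverse powers. Checking that the resulting linear constraints on \((\alpha_a,\beta_a,\gamma_a,\delta_a)\) are nondegenerate except at \(\delta_{r+1}\) is the crux.
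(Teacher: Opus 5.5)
Your sufficiency count for \(\mathbb Q_{(r_1,r_2,r_3)}^3\) and your reduction of the case with two orders at least two to the coordinate argument of \cref{thm:tensor-compatible-kernel} are correct, and they coincide with the paper's proof. The one-high-axis case, however, has a genuine gap.

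The first problem is that the tests you name there cannot reach the \(\alpha_a\). For a pair with \(U_z=V_z=0\), the determinant reduces to \(f_z\,x^{b+c-1}(bFG'-cF'G)\), so it sees only \(f_z=C(x)+D(x)y\). Your \(y\leftrightarrow z\) mirrored pairs see only \(f_y=B(x)+D(x)z\). The column \(f_x\) is multiplied by \(U_yV_z-U_zV_y\), which vanishes for every one of these tests. For example, \(f=x^p\) with \(r<p\) passes all of them, yet it is not compatible. In the isotropic proof, \(H\) itself (not just \(H_z\)) is recovered only through the consecutive-\(j\) moment identity and the integration by parts. You rightly note that identity is unavailable here.

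The missing ingredient is a mixed pair such as \((U,V)=(Fy,Gz)\). Its transverse integral is \(4A'FG\), because the \(B,C,D\) contributions are odd in \(y\) or \(z\). Products \(FG\) span \(\mathbb P_{2p}\), so you can choose \(FG\) of degree \(2n_1+1-a\). Then the leading term of \(A'FG\) hits the first missed moment \(x^{2n_1}\), which excludes \(\deg A=a>r\).

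The second problem is that the \(yz\) sector is only half done. Your \(z\)-independent test with \(F=G=y\) does isolate \(D\), with a factor \((b-c)\). It excludes \(\deg D=a\) exactly when some \(b\ne c\) with \(b+c=2n_1+1-a\) is available, that is, when \(a>r+1\). But compatibility of the extra mode \(x^{r+1}yz\) is not covered by your sufficiency paragraph, which treats only \(\mathbb Q_{(r,1,1)}\). It rests on the unexplained assertion that surviving pairings need one more \(x\)-degree.

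The actual mechanism is a cancellation. Take \(f=D(x)yz\), \(U=F(x)y^jz^k\) and \(V=G(x)y^\ell z^m\), with exponents in \(\{0,1\}\).
Every term of the determinant carries \(y^{j+\ell}z^{k+m}\), so parity forces \(j=\ell\) and \(k=m\).
Then \(U_yV_z-U_zV_y\equiv0\), the \(D'\) column drops out, and the determinant equals \((j-k)D(F'G-FG')y^{2j}z^{2k}\).
For monomials, \(F'G-FG'=(b-c)x^{b+c-1}\), whose degree is at most \(2p-2\).
This gives the bound \(r+1+2p-2\le2n_1-1\).

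The paper resolves the nondegeneracy you defer as the crux with an explicit table of four sector-isolating pairs, \((Fy,Gz)\), \((F,Gz)\), \((F,Gy)\), \((Fy,Gy)\). It combines this with the facts that \(FG\), \(F'G\) and \(F'G-FG'\) span \(\mathbb P_{2p}\), \(\mathbb P_{2p-1}\) and \(\mathbb P_{2p-2}\). Without these, the one-high-axis formula is not proved.

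A minor point: the Gauss defect of odd powers above \(x^{2n_1}\) vanishes by symmetry. The \(x\)-defect is therefore not nonzero on all higher monomials, and only the first missed moment should be invoked.
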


\begin{proof}
For potentials of coordinate degrees \(r_i\), the mixed determinant
integrand has coordinate degrees at most \(r_i+2p_i-1\le2n_i-1\);
this proves sufficiency in \eqref{eq:anisotropic-compatible-kernel}.
To exclude a higher potential degree in one coordinate, use the coordinate
argument in \Cref{thm:tensor-compatible-kernel}. Any nontrivial excluded
degree lies in a coordinate of order at least three.
Another coordinate has order at least two and can serve as \(y\) in that
argument; the remaining coordinate has order at least one.
The permitted moment totals are exactly \(1,\ldots,p_y+1\) and
\(1,\ldots,p_z+1\), and the endpoint tests have degree at most \(2p_y\).
Thus every step of the leading-term and boundary-moment proof applies,
giving necessity in all coordinates.
The case with all orders one follows directly from degree exactness.

It remains to determine the one-high-axis case.
Write a scalar candidate uniquely as
\[
f(x,y,z)=A(x)+B(x)y+C(x)z+D(x)yz,
\qquad A,B,C,D\in\mathbb P_p(x).
\]
The transverse rules integrate all relevant \(y,z\) polynomials exactly.
For arbitrary \(F,G\in\mathbb P_p(x)\), the following scalar variation
pairs isolate the indicated coefficient through the transverse integral
of \(\det(\nabla f,\nabla U,\nabla V)\):
\[
\begin{array}{c|c|c}
\text{coefficient}&(U,V)&\displaystyle\int_{[-1,1]^2}
                         \det(\nabla f,\nabla U,\nabla V)\,dy\,dz\\ \hline
A&(Fy,Gz)&4A'FG\\
B&(F,Gz)&-4BF'G\\
C&(F,Gy)&4CF'G\\
D&(Fy,Gy)&\tfrac43D(F'G-FG').
\end{array}
\]
The other coefficient sectors vanish in each row by transverse parity.
The products \(FG\) span \(\mathbb P_{2p}\), and \(F'G\) span
\(\mathbb P_{2p-1}\).
The combinations \(F'G-FG'\) span \(\mathbb P_{2p-2}\): for monomials
\(F=x^b,G=x^c\), they equal
\((b-c)x^{b+c-1}\), and every degree from zero through \(2p-2\) admits
\(0\le b,c\le p\) with \(b\ne c\).

Suppose \(A\) has leading degree \(a>r\).
Choose \(FG\) of degree \(2n_1-a+1\); the bounds on \(a\) place that
degree in \(0,\ldots,2p\).
Its product with \(A'\) reaches the first missed moment \(x^{2n_1}\),
whereas all lower coefficients are integrated exactly.
The nonzero first-missed Gauss moment forces the leading coefficient to
vanish, a contradiction. The \(B,C\) rows use a product \(F'G\) of
degree \(2n_1-a\) to give the same contradiction for \(a>r\).
The \(D\) row uses \(F'G-FG'\) of degree \(2n_1-a\), available whenever
\(a>r+1\). Thus the four coefficient degrees are bounded by
\(r,r,r,\min(p,r+1)\).
If \(r=p\), there are no higher candidate degrees to exclude.

For sufficiency, the \(A,B,C\) sectors have maximal \(x\)-degree at most
\(r+2p-1\le2n_1-1\) in every mixed determinant term.
For the \(D(x)yz\) sector, take transverse monomials
\(U=F(x)y^jz^k\), \(V=G(x)y^\ell z^m\), with exponents zero or one.
A nonzero transverse moment requires \(j=\ell\) and \(k=m\) by parity.
In that case the term containing \(D'\) cancels, and the remaining
coefficient is proportional to \(D(F'G-FG')\).
Its degree is at most \(r+1+2p-2\le2n_1-1\).
All other transverse terms are odd and have zero exact and Gauss
integrals. This proves sufficiency and the full scalar kernel.
The complementary-component reduction gives its vector counterpart.
\end{proof}

When \(r<p\), \eqref{eq:one-high-axis-kernel} consists of
\(\mathbb Q_{(r,1,1)}\) and one additional scalar mode \(x^{r+1}yz\).
For example, \((p_1,p_2,p_3)=(3,1,1)\) with Gauss orders \((4,2,2)\)
admits \(x^3yz\). The transverse variation space controls this exception;
the isotropic kernel formula alone would omit it.

\subsection{Serendipity spaces and fixed-candidate enrichment}

We use the serendipity polynomial spaces defined by superlinear degree
\citep{arnold2011serendipity}. For a monomial
\(x^{\alpha_1}y^{\alpha_2}z^{\alpha_3}\), its superlinear degree is
\(\sum_{\alpha_i\ge2}\alpha_i\): exponents zero and one do not contribute.
The scalar space \(\mathcal S_p\) is the span of monomials with
superlinear degree at most \(p\).
Thus \(\mathcal S_2\) contains the eight multilinear monomials and twelve
monomials with exactly one exponent two; its dimension is 20.
The space \(\mathcal S_3\) adds twelve monomials with exactly one exponent
three and has dimension 32.
Both are subspaces of their corresponding tensor spaces.

\begin{theorem}[Cubic-serendipity compatible kernel]
\label{thm:serendipity-compatible-kernel}
For \(\widehat V=\widehat Z=\mathcal S_3^3\) and tensor Gauss4,
\begin{equation}
\widehat Z^{\,0}
=\left(\mathcal S_2\oplus
\operatorname{span}\{x^3yz,xy^3z,xyz^3\}\right)^3.
\label{eq:serendipity-compatible-kernel}
\end{equation}
Keep the candidate space fixed to \(\widehat Z=\mathcal S_3^3\), and
enlarge only the local variations to \(\widehat V=\mathbb Q_3^3\).
Then
\begin{equation}
\widehat Z^{\,0}=\mathcal S_2^3.
\label{eq:fixed-candidate-enriched-kernel}
\end{equation}
\end{theorem}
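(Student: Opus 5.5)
The plan is to reduce everything to scalar candidates, as in the proof of \Cref{thm:tensor-compatible-kernel}. Supporting \(u\) and \(v\) in two distinct physical components isolates one scalar component \(f\) of the potential, and the scalar pairing is \(\mathcal L[\det(\nabla f,\nabla U,\nabla V)]\). The tensor Gauss4 rule is exact through coordinate degree \(7\) in each variable. A scalar candidate \(f\in\mathcal S_3\) decomposes as \(f=f_2+\sum_{\text{axes}}(\text{excess terms})\), with \(f_2\in\mathcal S_2\). The excess monomials on the \(x\)-axis are \(x^3\), \(x^3y\), \(x^3z\) and \(x^3yz\); the \(y\)- and \(z\)-axes have the cyclic analogues. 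Since \(\mathcal S_2\subset\mathbb Q_2\) and \(\mathbb Q_3\supset\mathcal S_3\), the case \(p=3,n=4\) of \Cref{thm:tensor-compatible-kernel} gives \(r=\min\{3,2\}=2\). Hence \(\mathcal S_2^3\) is compatible against all of \(\mathbb Q_3^3\), and a fortiori against \(\mathcal S_3^3\). This gives the inclusion "\(\supseteq\)" of \(\mathcal S_2^3\) in both statements.

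The enriched statement \eqref{eq:fixed-candidate-enriched-kernel} then follows with essentially no new work. Enlarging the variations to \(\mathbb Q_3^3\) makes the kernel for candidates in \(\mathcal S_3^3\) equal to \(\mathcal S_3^3\cap\widehat Z^{\,0}_{\mathbb Q_3}\). By \Cref{thm:tensor-compatible-kernel} (candidates \(\mathbb Q_3^3\), \(n=4\)), this intersection is \(\mathcal S_3^3\cap\mathbb Q_2^3=\mathcal S_2^3\). The monomials of \(\mathcal S_3\) lying in \(\mathbb Q_2\) have every exponent at most two, and exactly one of them may equal two, so they are precisely \(\mathcal S_2\). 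The point is that the coordinate argument there only needs the variations \(U=x^by^jz^k\), \(V=x^cy^{M-j}z^{N-k}\) with \(p_y=2\)-type transverse splits. Those are available in \(\mathbb Q_3\) but not in \(\mathcal S_3\).

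For \eqref{eq:serendipity-compatible-kernel}, sufficiency of the three mixed cubic modes comes next. Take \(f=x^3yz\) and variation monomials in \(\mathcal S_3\), and count \(x\)-degrees of each determinant term. The total exceeds \(7\) only when both \(U\) and \(V\) carry \(x^3\). Serendipity then forces \(U,V\in x^3\operatorname{span}\{1,y,z,yz\}\), which is exactly the \((3,1,1)\) structure of \Cref{thm:anisotropic-compatible-kernel}. There \(x^3yz\) is the \(D(x)yz\) sector with \(r=2\) and \(\min(p,r+1)=3\). I will reuse that parity argument. A nonzero transverse moment requires equal transverse monomials in \(U\) and \(V\), and then the surviving coefficient is proportional to \(D(F'G-FG')\). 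With \(F=G=x^3\) this vanishes identically. Terms with \(x\)-degree at most \(7\) are integrated exactly. The cyclic modes \(xy^3z\) and \(xyz^3\) follow by symmetry, and linearity finishes sufficiency.

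Necessity is the main obstacle. We must show that no nonzero combination of the remaining excess modes \(x^3,x^3y,x^3z\) (and their cyclic versions), added to anything in the claimed kernel, is compatible. The concern is cancellation across sectors and across axes. I would first choose tests whose quadrature error is confined to one axis: take \(U=F(x)\,m_1\) and \(V=G(x)\,m_2\), with \(F,G\in\mathbb P_3(x)\) and \(m_1,m_2\in\{1,y,z\}\), using the rows \(A\), \(B\) and \(C\) of the table in \Cref{thm:anisotropic-compatible-kernel}. Next I will check the \(y\)- and \(z\)-degrees. The cyclic excess modes have \(x\)-degree at most one, so they contribute at most degree \(\le 1+3+3-1\) in \(x\) and are integrated exactly in \(x\). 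Their \(y\)- and \(z\)-degrees are also at most \(7\) against these tests, so their error vanishes. The \(x^8\) moment therefore sees only the coefficients of \(x^3\), \(x^3y\) and \(x^3z\). Transverse parity separates these three, since each row picks out one sector. Each row then reaches \(x^8\) with a nonzero Gauss error, \(-\int\pi_4^2\), as in the anisotropic proof, which forces each leading coefficient to vanish. Verifying that these particular tests lie in \(\mathcal S_3\) and pick up no stray \(x^8\) contribution from \(f_2\) or the \(yz\)-sector is the step I expect to need the most care.
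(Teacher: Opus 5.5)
Your proposal is correct and follows essentially the same route as the paper. You obtain $\mathcal S_2^3$ from the tensor kernel with $r=2$. You prove compatibility of $x^3yz$ by noting that only pairs in which both variations carry $x^3$ exceed Gauss4 exactness, and then applying serendipity and transverse parity. You prove necessity with tests that reach the nonzero $x^8$ Gauss defect, and you get the enriched case from $\mathcal S_3^3\cap\mathbb Q_2^3=\mathcal S_2^3$. The only difference is presentational: you route the sufficiency step and the isolating tests through the one-high-axis table of \Cref{thm:anisotropic-compatible-kernel}, whereas the paper directly computes the three determinants for $(x^3,x^3z)$, $(x^3,x^3y)$ and $(x^3y,x^3z)$, which are exactly your rows with $F=G=x^3$.
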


\begin{proof}
The candidates in \(\mathcal S_2\subset\mathbb Q_2\) are compatible
against the larger \(\mathbb Q_3\) variation space by
\Cref{thm:tensor-compatible-kernel}.
Consider next the scalar candidate \(f=x^3yz\).
For monomial variations with exponent triples \(\beta,\gamma\), the
mixed determinant has coefficient
\(\det[(3,1,1);\beta;\gamma]\) and powers
\((3,1,1)+\beta+\gamma-(1,1,1)\).
Only the \(x\)-degree can exceed Gauss4's exactness degree seven.
That requires \(\beta_x=\gamma_x=3\).
Serendipity then forces their other exponents to be zero or one.
Nonzero transverse moments require
\(\beta_y=\gamma_y\) and \(\beta_z=\gamma_z\), by parity.
Their full exponent triples coincide, making the determinant coefficient
zero. Every remaining term is integrated exactly or has an odd
transverse power. This proves compatibility of \(x^3yz\);
coordinate permutation supplies the other two mixed modes.

For necessity, write a candidate as its \(\mathcal S_2\) part plus three
coordinate groups of the form
\[
x^3(A+By+Cz+Dyz)
\]
and their coordinate permutations, with scalar coefficients \(A,B,C,D\).
For this \(x\)-group the variation pairs
\[
(x^3,x^3z),\qquad (x^3,x^3y),\qquad (x^3y,x^3z)
\]
give mixed determinants, respectively,
\[
-3x^8(B+Dz),\qquad
 3x^8(C+Dy),\qquad
 3x^8(A-Dyz).
\]
Transverse integration isolates \(B,C,A\) in that order.
The \(x^8\) Gauss defect is nonzero.
Terms in \(\mathcal S_2\) and in the other two cubic coordinate groups
remain within the rule's exactness in these tests, so they cannot cancel
the displayed defects.
Thus \(A=B=C=0\). The same argument in each coordinate leaves precisely
the three mixed modes in \eqref{eq:serendipity-compatible-kernel}.
The complementary-component reduction again proves the vector statement.

With \(\mathbb Q_3^3\) variations, the tensor theorem gives kernel
\(\mathbb Q_2^3\) among \(\mathbb Q_3^3\) candidates.
Restricting the candidates to the unchanged \(\mathcal S_3^3\) gives
\(\mathcal S_3^3\cap\mathbb Q_2^3=\mathcal S_2^3\).
Indeed, an \(\mathcal S_3\) monomial with all exponents at most two
has at most one exponent two, exactly the \(\mathcal S_2\) condition.
\end{proof}

The first kernel has 23 scalar dimensions; the fixed-candidate enriched
kernel has 20. The 27-dimensional scalar \(\mathbb Q_2\) kernel pertains
to the different, larger \(\mathbb Q_3\) candidate space.
These dimensions therefore describe three precisely specified local
space pairs.

One direct pairing exhibits the lost mode. Take
\[
z_*=(x^3yz,0,0),\qquad u_*=(0,x^3y^2,0),\qquad v_*=(0,0,x^3).
\]
The potential belongs to the fixed \(\mathcal S_3^3\) candidate space.
The variation \(u_*\) belongs to \(\mathbb Q_3^3\) and is excluded from
\(\mathcal S_3^3\).
Its scalar mixed determinant is \(-6x^8y^2\), giving
\begin{equation}
\mathcal T(z_*;u_*,v_*)
=-6\left(-\frac{128}{11025}\right)
       \left(\frac23\right)2
=\frac{1024}{11025}.
\label{eq:serendipity-enrichment-moment}
\end{equation}
Here \(128/11025\) is the squared integral norm of the monic fourth
Legendre polynomial. Thus enrichment exposes a nonzero Hessian defect
at the same candidate potential and quadrature rule.
The effect of this restricted compatible class on attainable material
certificates is developed below.

\subsection{The quadratic-tetrahedron specialization}

On a tetrahedral parent, write \(\mathbb P_j\) for total polynomial degree
at most \(j\). For
\(\widehat V=\widehat Z=\mathbb P_2^3\) and the symmetric four-point
degree-two rule, the local compatible space is
\[
\widehat Z^{\,0}=\mathbb P_1^3.
\]
Affine potentials are sufficient by degree exactness.
For necessity, the opposite-edge representation in
\Cref{cor:faithful-jet} makes the map from quadratic-modulo-affine
potential coordinates to its defect Hessian injective.
Its Hessian therefore vanishes precisely when all quadratic midside
deviations vanish, which is the parent-affine condition.
For a constant physical matrix \(R\), the potential pullback is \(RX_e\);
the same conclusion is the alignment criterion of
\Cref{prop:exact-gauge-alignment}.
The complete opposite-edge derivation is retained in that appendix.

All classifications in this section concern the local potential slot of
the fixed pairing, tested against every pair of unrestricted local
variations. Global continuity selects the corresponding
\(\mathcal Z_h^0\), and boundary compensation concerns its assembled
operator. Positivity of the resulting reference form is a further
material and geometric question.

\section{Optimal compatible material margins}
\label{sec:optimal-margin}

The exact-potential kernel specifies the admissible choices. We now
measure their positivity through the material certificate defined in
\cref{sec:gauge-design-levels}. The principal optimization ranges over
continuous compatible potentials and tests their translated material
forms at the prescribed quadrature points.
Attainment and ideal-margin rigidity lead to the strict enrichment
restriction in \cref{sec:enrichment-loss}. The complementary
constant-coefficient problem is treated in \cref{sec:constant-margin-comparison}.
The translation mechanism belongs to classical null-Lagrangian stability theory
\citep{edelen1986null,sivaloganathan1989hamilton}; the fixed cubature and
potential space determine the restrictions considered here.

\subsection{The stress-free translated form}

Assume throughout this section that the material coefficients in
\eqref{eq:mooney-energy} have a common positive sum
$c=c_{1,e}+c_{2,e}>0$ across the cells. Their individual values may differ.
At the identity, their Hessians therefore coincide; denote this common
form by $\mathcal H_c=D^2W_{{\rm iso},e}(I_3)$.
For a matrix increment $H$, define its symmetric trace-free part by
\[
 H_{\rm d}=\frac{H+H^T}{2}-\frac{\tr H}{3}I_3.
\]
Differentiation at rest gives
\[
 \mathcal H_c[H,H]=4c\|H_{\rm d}\|_F^2.
\]
To verify this directly, put $s=\tr H$, $r=\tr(H^2)$, and
$n_H=\|H\|_F^2$. Along $F(t)=I_3+tH$,
\[
 \det F(t)=1+st+\tfrac12(s^2-r)t^2+O(t^3),\qquad
 \cof F(t)=I_3+t(sI_3-H^T)+t^2\cof H.
\]
Substitution into each modified invariant shows that its coefficient of
$t^2$ is $n_H+r-2s^2/3=2\|H_{\rm d}\|_F^2$.
Using $2\tr(\cof H)=s^2-r$ then yields
\begin{equation}
 \mathcal H_c[H,H]+(2cI_3):(H\times H)
 =c\mathcal A[H,H],
 \label{eq:stress-free-translation-identity}
\end{equation}
where $\mathcal A=A_1(I_3)$ is the metric in
\eqref{eq:rest-material-metric}.

For $U\in\mathbb M$, introduce the symmetric bilinear form
\begin{equation}
 \mathcal C_U[H,L]=U:(H\times L).
 \label{eq:cofactor-translation-form}
\end{equation}
It depends linearly on $U$. We use the same symbols for
$\mathcal A$ and $\mathcal C_U$ and their nine-by-nine matrices in
the Frobenius-orthonormal entry basis. In particular,
$\mathcal A^{-1/2}$ denotes the inverse positive square root of that
positive matrix.

\subsection{A convex coefficient loss}

We now allow the physical coefficient to vary through a continuous
potential. Define
\begin{equation}
 \nu(U)
 =-\lambda_{\min}(\mathcal A^{-1/2}\mathcal C_U\mathcal A^{-1/2})
 =\sup_{H\ne0}\frac{-U:(H\times H)}{\mathcal A[H,H]},
 \qquad U\in\mathbb M.
 \label{eq:cofactor-coefficient-loss}
\end{equation}
Here $\lambda_{\min}$ is the smallest ordinary eigenvalue of the
displayed symmetric matrix. The sign selects the loss of a positive
reference margin caused by the cofactor translation.

\begin{lemma}[Coefficient-loss bounds]
\label{lem:cofactor-loss-bounds}
The function $\nu$ is continuous, convex, and positively homogeneous
for nonnegative scalars. It vanishes only at zero and satisfies
\begin{equation}
 \frac12\|U\|_{\max}\le\nu(U),\qquad
 \frac16\|U\|_F\le\nu(U)\le\|U\|_2,
 \quad\|U\|_{\max}:=\max_{i,j}|U_{ij}|.
 \label{eq:cofactor-loss-bounds}
\end{equation}
In general $\nu(U)\ne\nu(-U)$, so $\nu$ is a convex loss rather
than a norm.
\end{lemma}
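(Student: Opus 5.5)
The plan is to rewrite the loss as a supremum of linear functionals of $U$ and then obtain each bound by testing against explicit increments. By \eqref{eq:cross-definition}, $H\times H=2\cof H$, so
\[
\nu(U)=\sup_{H\ne0}\frac{-2\,U:\cof H}{\mathcal A[H,H]}.
\]
For each fixed $H$ the quotient is linear in $U$. Hence $\nu$ is a pointwise supremum of linear functions, which gives convexity and positive homogeneity immediately. Rank-one increments have zero cofactor, so $\nu\ge0$. Continuity will follow once $\nu$ is finite on $\mathbb M$, since a finite convex function on $\R^9$ is continuous. The upper bound below supplies this finiteness and in fact gives a Lipschitz estimate $|\nu(U)-\nu(V)|\le\nu(U-V)\le\|U-V\|_2$.

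\emph{Upper bound.} I would use the duality between the spectral and nuclear norms:
\[
|U:\cof H|\le\|U\|_2\,\|\cof H\|_*.
\]
If $\sigma_1,\sigma_2,\sigma_3$ are the singular values of $H$, then those of $\cof H$ are $\sigma_2\sigma_3,\sigma_1\sigma_3,\sigma_1\sigma_2$. Their sum is at most $\sum_i\sigma_i^2=\|H\|_F^2$. By \eqref{eq:rest-material-metric}, $\mathcal A[H,H]\ge2\|H\|_F^2$. Together these give
\[
-2\,U:\cof H\le2\|U\|_2\|H\|_F^2\le\|U\|_2\,\mathcal A[H,H].
\]

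\emph{Max-entry lower bound.} This is the main construction. Fix an entry $(k,l)$ and take $H=E_{ab}+sE_{cd}$ with $s=\pm1$. Here $\{a,c\}$ are the two rows other than $k$ and $\{b,d\}$ are the two columns other than $l$. A direct minor computation shows that $\cof H$ has a single nonzero entry $\pm s$ at position $(k,l)$. Choosing the sign of $s$ therefore makes $-2\,U:\cof H=2|U_{kl}|$.

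The delicate point is to pair rows with columns so that $a\ne b$ and $c\ne d$. Then $\tr H=0$ and $\mathcal A[H,H]=2\|H\|_F^2=4$, which gives $\nu(U)\ge|U_{kl}|/2$. The pairing needs two cases.
\begin{itemize}
\item If $k=l$, the complementary row and column sets coincide as $\{m,n\}$. Take $H=E_{mn}+sE_{nm}$.
\item If $k\ne l$, the rows are $\{l,x\}$ and the columns are $\{k,x\}$. Take $H=E_{lx}+sE_{xk}$.
\end{itemize}
This bound also gives the vanishing-only-at-zero claim. The Frobenius bound then follows from $\|U\|_F\le3\|U\|_{\max}$.

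\emph{Asymmetry.} $\nu(U)\ne\nu(-U)$ exactly when the spectrum of $\mathcal A^{-1/2}\mathcal C_U\mathcal A^{-1/2}$ is not symmetric about zero. For $U=I_3$, a decomposition into the span of $I_3$, the symmetric trace-free matrices and the skew matrices gives eigenvalues $\tfrac12,-\tfrac12,\tfrac12$. That spectrum is symmetric, so $\nu(I_3)=\nu(-I_3)=\tfrac12$ and $U=I_3$ is not a witness. I would therefore test a less symmetric coefficient, first $U=\operatorname{diag}(1,1,0)$ or $U=E_{11}$. In either case I would compute the block spectrum exactly, using the invariant coordinate blocks of $\mathcal C_U$, and exhibit unequal extreme eigenvalues.

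I expect finding and verifying this explicit asymmetric witness to be the main remaining obstacle. The norm bounds themselves are routine once the single-entry cofactor construction is in place.
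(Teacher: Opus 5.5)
Your arguments for convexity, positive homogeneity, continuity, the spectral upper bound and the entrywise lower bound (hence strict positivity and the Frobenius bound) are correct and essentially the paper's. Evaluating at $H=H_1+sH_2$ with $H_1\times H_2=\pm E_{kl}$ is the same computation as the paper's $2\times2$ principal block of $\nu(U)\mathcal A+\mathcal C_U\succeq0$. Reading positivity off the max-entry bound is slightly more direct than the paper's zero-diagonal argument. There is one small slip: because $\nu$ is not even, sublinearity gives $|\nu(U)-\nu(V)|\le\max\{\nu(U-V),\nu(V-U)\}$, not $\nu(U-V)$; the final bound $\|U-V\|_2$ survives.

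The genuine gap is the last assertion: you never produce a $U$ with $\nu(U)\ne\nu(-U)$. Your stated criterion is also wrong. What matters is $\lambda_{\max}\ne-\lambda_{\min}$ for the pencil $(\mathcal C_U,\mathcal A)$, not asymmetry of the whole spectrum.

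This affects your candidates. For diagonal $U$ both forms decouple into the diagonal entries and the three pairs $(H_{ij},H_{ji})$. Such a pair contributes eigenvalues $\pm U_{kk}/2$, where $k$ is the complementary index. So any asymmetry must come from the diagonal block and must exceed $\max_k|U_{kk}|/2$.

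For $U=E_{11}$ the diagonal block ($\mathcal C=2a_2a_3$) has eigenvalues $-\tfrac12$ (direction $(0,1,-1)$), $0$ and $\tfrac13$ (direction $(1,-2,-2)$). Hence $\nu(E_{11})=\nu(-E_{11})=\tfrac12$, even though the spectrum is not symmetric, so this candidate fails.

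For $U=\operatorname{diag}(1,1,0)$ the witness does work. On $a=(\beta,\beta,\gamma)$ the block reduces to $\mathcal C=4\beta\gamma$ against $\mathcal A=\tfrac{20}{3}\beta^2+\tfrac83\beta\gamma+\tfrac83\gamma^2$. This gives $12\lambda^2+4\lambda-3=0$, so $\lambda=(-1\pm\sqrt{10})/6$. Therefore $\nu(U)=(1+\sqrt{10})/6$ while $\nu(-U)=\tfrac12$, which closes the gap.

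The paper does exactly this with $U=\operatorname{diag}(1,1,-1)$, obtaining $(2+\sqrt{13})/6$ versus $\tfrac12$.
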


\begin{proof}
The supremum in \eqref{eq:cofactor-coefficient-loss} gives convexity
and positive homogeneity; continuity follows from eigenvalue
continuity. The matrix of $\mathcal C_U$ has zero diagonal because
every matrix unit is rank one. The coefficient map is injective:
two complementary matrix units isolate each entry of $U$ in the
cofactor polarization. Thus for $U\ne0$ this is a nonzero symmetric
trace-zero matrix. It has both signs, as does its congruence by
$\mathcal A^{-1/2}$, proving $\nu(U)>0$.

By definition $\nu(U)\mathcal A+\mathcal C_U\succeq0$.
For each entry $U_{ij}$ there are distinct off-diagonal matrix units
$H_1,H_2$ with $H_1\times H_2=\pm E_{ij}$.
If $i=j$, use the opposite off-diagonal entries in the complementary
two-by-two block. If $i\ne j$ and $k$ is the remaining index, use
$E_{jk}$ and $E_{ki}$. For example,
\[
 E_{23}\times E_{31}=E_{12},\qquad
 E_{23}\times E_{32}=-E_{11}.
\]
These increments have unit Frobenius norm, zero trace, and zero
mutual Frobenius product. The associated principal block is
\[
 \begin{pmatrix}2\nu(U)&\pm U_{ij}\\
                 \pm U_{ij}&2\nu(U)\end{pmatrix}\succeq0.
\]
Its determinant gives $|U_{ij}|\le2\nu(U)$.
The bound $\|U\|_F\le3\|U\|_{\max}$ proves the Frobenius estimate.

For the upper bound, let $\sigma_1,\sigma_2,\sigma_3$ be the
singular values of $H$. The nuclear norm, denoted by
$\|\cdot\|_*$, is the sum of singular values. Hence
\begin{align*}
 |\mathcal C_U[H,H]|
 &=2|U:\cof H|
 \le2\|U\|_2\|\cof H\|_*\\
 &=2\|U\|_2(\sigma_1\sigma_2+\sigma_1\sigma_3+
                         \sigma_2\sigma_3)\\
 &\le2\|U\|_2\|H\|_F^2
 \le\|U\|_2\mathcal A[H,H].
\end{align*}
The penultimate inequality follows from the sum of the three squared
singular-value differences.

To see the asymmetry explicitly, take
$U=\operatorname{diag}(1,1,-1)$. On the diagonal increments,
the two matrices are
\[
 \mathcal A_{\rm diag}=2I_3+\tfrac23\mathbf1_3\mathbf1_3^T,
 \qquad
 (\mathcal C_U)_{\rm diag}=
 \begin{pmatrix}0&-1&1\\-1&0&1\\1&1&0\end{pmatrix},
\]
where $\mathbf1_3=(1,1,1)^T$.
Their generalized eigenvalues are $1/2$ and
$(-2\pm\sqrt{13})/6$: the direction $(1,-1,0)^T$ gives $1/2$,
and the remaining two-dimensional determinant gives
$12\lambda^2+8\lambda-3=0$.
Each off-diagonal increment pair has generalized eigenvalues
$1/2$ and $-1/2$. Therefore
\[
 \nu(U)=\frac{2+\sqrt{13}}6,\qquad \nu(-U)=\frac12.
\]
\end{proof}

\subsection{Optimization over exact continuous potentials}

Fix the geometry, the finite nonempty quadrature point set on each
cell, the unrestricted local variation spaces $\widehat V_e$, and
the finite-dimensional continuous candidate space $\mathcal Z_h$.
Include constant vectors and impose no additional prescribed potential
trace. Its cellwise exact subspace is
\[
 \mathcal Z_h^0=
 \{z\in\mathcal Z_h:
   \mathcal T_e(z_e;u,v)=0
   \text{ for every }e\text{ and all }u,v\in\widehat V_e\}.
\]
This is a linear space, so the zero potential is feasible.
For a candidate $z$, let
\begin{equation}
 \gamma_h(z)=\min_{e,q}\inf_{H\ne0}
 \frac{\mathcal H_c[H,H]+R_z(X_e(\xi_{eq})):(H\times H)}
      {\mathcal A[H,H]}.
 \label{eq:quadrature-point-material-margin}
\end{equation}
Every matrix increment is tested at each selected point. The minimum
does not range over all spatial points of the cell, and displacement
boundary constraints do not restrict its matrix tests.

\begin{theorem}[Attained optimal compatible margin]
\label{thm:optimal-compatible-margin}
For every $z\in\mathcal Z_h$,
\begin{equation}
 \gamma_h(z)=c-\max_{e,q}
       \nu\bigl(R_z(X_e(\xi_{eq}))-2cI_3\bigr).
 \label{eq:compatible-margin-formula}
\end{equation}
The best compatible margin is attained and satisfies
\begin{equation}
 \gamma_h^*:=\max_{z\in\mathcal Z_h^0}\gamma_h(z)
 =c-\min_{z\in\mathcal Z_h^0}\max_{e,q}
       \nu\bigl(R_z(X_e(\xi_{eq}))-2cI_3\bigr),
 \qquad 0\le\gamma_h^*\le c.
 \label{eq:optimal-compatible-margin}
\end{equation}
\end{theorem}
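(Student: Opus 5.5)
The plan is to reduce everything to the stress-free translation identity \eqref{eq:stress-free-translation-identity} and the properties of $\nu$ in \Cref{lem:cofactor-loss-bounds}.

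\textbf{Step 1: the pointwise formula.} Fix a cell $e$ and point $q$, and write $R=R_z(X_e(\xi_{eq}))$. Since $\mathcal C_U$ is linear in $U$, we can split
\[
\mathcal H_c[H,H]+R:(H\times H)
=\bigl(\mathcal H_c[H,H]+(2cI_3):(H\times H)\bigr)+(R-2cI_3):(H\times H).
\]
By \eqref{eq:stress-free-translation-identity} the first bracket equals $c\mathcal A[H,H]$. Dividing by $\mathcal A[H,H]>0$ and taking the infimum over $H\ne0$ gives
\[
c+\inf_{H\ne0}\frac{(R-2cI_3):(H\times H)}{\mathcal A[H,H]}
= c-\nu(R-2cI_3),
\]
directly from the supremum form of \eqref{eq:cofactor-coefficient-loss}. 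Taking the minimum over the finitely many pairs $(e,q)$ turns $-\nu$ into $-\max\nu$, which proves \eqref{eq:compatible-margin-formula}.

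\textbf{Step 2: attainment.} By Step 1 it suffices to show that $\Phi(z)=\max_{e,q}\nu(R_z(X_e(\xi_{eq}))-2cI_3)$ attains its minimum on the finite-dimensional linear space $\mathcal Z_h^0$.
\begin{itemize}
\item The map $z\mapsto R_z(X_e(\xi_{eq}))=J_{z,e}(\xi_{eq})J_{X,e}^{-1}(\xi_{eq})$ is linear, and $\nu$ is continuous, so $\Phi$ is continuous and convex.
\item Lower bound: by \Cref{lem:cofactor-loss-bounds}, $\Phi(z)\ge\tfrac16\max_{e,q}\|R_z(X_e(\xi_{eq}))-2cI_3\|_F$.
\item This lower bound is coercive only modulo the subspace $N=\{z\in\mathcal Z_h^0: R_z(X_e(\xi_{eq}))=0\ \forall e,q\}$. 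That subspace contains the constants and possibly more fields whose gradients vanish at the sampled points.
\item $\Phi$ is invariant under translations by $N$, so it descends to the finite-dimensional quotient $\mathcal Z_h^0/N$. On that quotient, $z\mapsto(R_z(X_e(\xi_{eq})))_{e,q}$ is injective, so $\max_{e,q}\|R_z\|_F$ is a norm there.
\item Hence the sublevel set $\{\Phi\le\Phi(0)\}$ is bounded in the quotient, and also closed. A continuous function on a compact set attains its minimum, and we lift the minimiser back to $\mathcal Z_h^0$.
\end{itemize}
This quotient step is the one place requiring care, since $\nu$ alone is not coercive on $\mathcal Z_h^0$ when gradients can vanish at the sample points. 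The identity in \eqref{eq:optimal-compatible-margin} then follows from Step 1 by taking the maximum of $c-\Phi$.

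\textbf{Step 3: the bounds.} Since $\nu\ge0$ (it vanishes only at zero and is otherwise positive), we get $\gamma_h(z)\le c$ for every $z$, hence $\gamma_h^*\le c$. For the lower bound, the zero potential is feasible because $\mathcal Z_h^0$ is linear. At $z=0$, Step 1 gives $\gamma_h(0)=c-\nu(-2cI_3)=c-2c\,\nu(-I_3)$ by positive homogeneity. So it suffices to show $\nu(-I_3)\le\tfrac12$, i.e.
\[
-I_3:(H\times H)=-2\tr\cof H\le\tfrac12\mathcal A[H,H].
\]
Equivalently, since $\mathcal H_c\succeq0$ and $\mathcal H_c[H,H]+0=c\mathcal A-(2cI_3):(H\times H)$, this is just the statement that $\mathcal H_c\succeq0$ by \eqref{eq:stress-free-translation-identity}.

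Indeed $\gamma_h(0)=\min\inf\mathcal H_c[H,H]/\mathcal A[H,H]=\inf 4c\|H_{\rm d}\|_F^2/\mathcal A[H,H]$, which is $\ge0$ and equals $0$ at $H$ skew. Thus $\gamma_h^*\ge\gamma_h(0)=0$, completing the proof.
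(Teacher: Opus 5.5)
Your proof is correct and follows the paper's argument: the same pointwise splitting $\mathcal H_c+\mathcal C_{R_z}=c\mathcal A+\mathcal C_{R_z-2cI_3}$, then attainment via the Frobenius lower bound on a finite-dimensional space, where your quotient $\mathcal Z_h^0/N$ is isomorphic to the paper's evaluation image $\mathscr E_h$, and finally the bounds from $\nu\ge0$ and the zero potential. One harmless sign slip occurs in Step 3: the condition $\nu(-I_3)\le\frac12$ reads $I_3:(H\times H)=2\tr\cof H\le\frac12\mathcal A[H,H]$ rather than its negative, but your subsequent direct argument via $\mathcal H_c\succeq0$ vanishing on skew increments is right.
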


\begin{proof}
At each quadrature point, \eqref{eq:stress-free-translation-identity}
gives
\[
 \mathcal H_c+\mathcal C_{R_z}
 =c\mathcal A+\mathcal C_{R_z-2cI_3}.
\]
Taking the smallest generalized eigenvalue proves
\eqref{eq:compatible-margin-formula}.
The evaluation image
\[
 \mathscr E_h=
 \{(R_z(X_e(\xi_{eq})))_{e,q}:z\in\mathcal Z_h^0\}
\]
is a closed linear subspace of a finite-dimensional matrix product space.
The Frobenius lower bound in \eqref{eq:cofactor-loss-bounds} makes
each sublevel set of the loss in \eqref{eq:optimal-compatible-margin}
bounded in that image. Continuity therefore gives a minimizing
evaluation vector and a potential realizing it. Constants and any other
invisible potential directions need not have unique coefficients.
The zero potential has margin zero, since $\mathcal H_c$ is
nonnegative and vanishes on skew increments. Nonnegativity of $\nu$
gives the upper bound $c$.
\end{proof}

In a basis of $\mathcal Z_h$, this is a finite semidefinite program:
maximize $\gamma$ subject to the linear exactness equations and
\[
 \mathcal H_c+\mathcal C_{R_z(X_e(\xi_{eq}))}
       -\gamma\mathcal A\succeq0
 \quad\text{for every selected }(e,q).
\]
Continuity is already incorporated in the candidate basis. Affine
matrix inequalities and this eigenvalue optimization are standard
semidefinite-programming constructions
\citep{vandenberghe1996semidefinite}. The theorem identifies the
best compatible material certificate in the prescribed space; it
supplies no optimizer cost or robustness result.
\Cref{sec:enrichment-loss} uses explicit potentials and analytic bounds.

\begin{corollary}[Rigidity of the ideal margin]
\label{cor:optimal-margin-rigidity}
Suppose in addition that the physical identity potential
$\iota(X)=X$ belongs to $\mathcal Z_h$. Assume gradient-sampling
unisolvence: for every candidate $z$ and every cell,
vanishing of $D_\xi(z_e-2cX_e)$ at all selected cell points implies
that this parent gradient vanishes identically. On a connected mesh,
\begin{equation}
 \gamma_h^*=c
 \quad\Longleftrightarrow\quad
 \iota\in\mathcal Z_h^0.
 \label{eq:ideal-margin-rigidity}
\end{equation}
Every potential attaining this ideal margin equals $2c\iota+b$
for a constant vector $b\in\R^3$.
\end{corollary}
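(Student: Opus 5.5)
The plan is to use the margin formula \eqref{eq:compatible-margin-formula} together with the fact from \Cref{lem:cofactor-loss-bounds} that $\nu$ vanishes only at zero. By \Cref{thm:optimal-compatible-margin}, $\gamma_h^*=c$ holds exactly when some $z\in\mathcal Z_h^0$ has $\nu(R_z(X_e(\xi_{eq}))-2cI_3)=0$ at every selected point. Since $\nu$ vanishes only at zero, this means $R_z=2cI_3$ at every selected point. The optimum is attained, so no limiting argument is needed. Both directions of \eqref{eq:ideal-margin-rigidity} then reduce to identifying the compatible potentials with this point-sampled coefficient.

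For ``$\Leftarrow$'': if $\iota\in\mathcal Z_h^0$, then $2c\iota\in\mathcal Z_h^0$ by linearity. Its physical coefficient is $R_z=2cJ_XJ_X^{-1}=2cI_3$ everywhere, by \eqref{eq:parent-physical}. Hence every loss term is $\nu(0)=0$, and $\gamma_h(2c\iota)=c$, which is the upper bound.

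For ``$\Rightarrow$'' and the uniqueness claim: let $z\in\mathcal Z_h^0$ attain $\gamma_h(z)=c$. Then $R_z(X_e(\xi_{eq}))=2cI_3$ for all $e,q$. Multiplying on the right by $J_{X,e}$ at the point gives $D_\xi(z_e-2cX_e)(\xi_{eq})=0$. Because $\iota\in\mathcal Z_h$, the difference $z-2c\iota$ is a candidate, so the gradient-sampling unisolvence hypothesis applies to it on each cell. It gives $D_\xi(z_e-2cX_e)\equiv0$, so $z-2c\iota$ is constant on each cell. The potential is continuous and the mesh is connected, with conforming cells sharing faces, so these cellwise constants agree across shared faces. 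Propagating along face-adjacent chains gives $z=2c\iota+b$ for a single $b\in\R^3$.

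Constants lie in $\mathcal Z_h^0$, because a constant potential has $D_\xi z=0$ and therefore $\mathcal T_e=0$. Hence $2c\iota=z-b\in\mathcal Z_h^0$. Dividing by $c>0$ gives $\iota\in\mathcal Z_h^0$. The same computation shows that every maximizer has the form $2c\iota+b$.

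The main point needing care is the connectivity step. Two cells meeting only along an edge or vertex still propagate the constant, since continuity at a single shared point suffices. Connectedness of $\Omega_h$ then provides a chain of cells, each sharing at least one point with the next, so the same constant holds on every cell. I would also note explicitly that the unisolvence hypothesis is stated for the candidate $z$ in the form $z_e-2cX_e$. That form is exactly the difference used above, so no further assumption on $\mathcal Z_h$ beyond $\iota\in\mathcal Z_h$ is required.
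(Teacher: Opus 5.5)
Your proposal is correct and follows the paper's proof essentially step for step. Attainment and the margin formula, with $\nu$ vanishing only at zero, force $R_z=2cI_3$ at the selected points; then $J_z=R_zJ_X$ gives vanishing sampled parent gradients of $z_e-2cX_e$, and unisolvence, continuity and connectedness yield $z=2c\iota+b$, while constants lie in the kernel and the converse uses the feasible potential $2c\iota$.

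One wording point: the unisolvence hypothesis applies to the candidate $z$ itself, since it already concerns $z_e-2cX_e$, not to $z-2c\iota$ treated as a new candidate. Your closing remark already reflects this.
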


\begin{proof}
Attainment and \eqref{eq:compatible-margin-formula} show that an
ideal-margin potential satisfies $R_z=2cI_3$ at every selected point.
Invertibility of $J_{X,e}$ then gives zero sampled parent gradient
of $z_e-2cX_e$. Unisolvence makes the difference constant on each
cell. Continuity and connectedness give one global constant vector.
Constants belong to the exact kernel, so $\iota\in\mathcal Z_h^0$.
Conversely, $z=2c\iota$ is feasible under that condition and has
margin $c$ by \eqref{eq:stress-free-translation-identity}.
\end{proof}

The required unisolvence holds for quadratic simplex gradients sampled
at the four affinely independent Tet10 Q4 points, and for tensor
$\mathbb Q_p$ gradients on at least $p+1$ distinct points per
coordinate. With full local $\mathbb Q_p^3$ variations, potential
pullbacks in $\mathbb Q_p^3$, $p\ge2$, geometry
$X_e\in\mathbb Q_p^3$, and a common Gauss order $n\ge p+1$,
the tensor kernel therefore makes the ideal
margin attainable precisely when every $X_e\in\mathbb Q_r^3$,
provided the identity belongs to the global candidate space.
Here again $r=\min\{p,2(n-p)\}$ for the stated isotropic rule.

Enlarging the local variation spaces, while keeping geometry, candidate
space, and quadrature fixed, can only shrink $\mathcal Z_h^0$;
its optimal margin cannot increase. This statement concerns a finite-point
material certificate. Assembled constraints can remove unfavorable
matrix directions, and potentials with the same boundary trace can
represent the same exactly assembled gauge. Their local margins
therefore do not determine the ordering of complete operator spectra.
The next section gives a strict, quantitative instance of the
compatible-margin restriction.

\section{A strict loss of compatible margin under enrichment}
\label{sec:enrichment-loss}

The kernel inclusion associated with displacement enrichment can impose a
strict restriction on every compatible potential. We exhibit a conforming
mesh family for which the best material margin decreases linearly with the
curvature amplitude. Geometry, cubature, and the candidate-potential space
are held fixed in each comparison; only the unrestricted local variation
space changes. The mechanical state is the identity deformation on the
curved reference domain, with the common material sum $c>0$ used in
\cref{sec:optimal-margin}.

\subsection{Conforming checkerboard geometry}

Partition the parameter cube $[0,1]^3$ into $N^3$ cubes of side $h=1/N$,
where $N$ is a positive integer. Index the cubes by $e=(i,j,k)$, with
$i,j,k\in\{0,\ldots,N-1\}$, and define the affine maps
\[
 T_e(\xi)=\frac h2
       (2i+1+\xi_1,\,2j+1+\xi_2,\,2k+1+\xi_3)^T,
 \qquad\xi\in[-1,1]^3.
\]
Let $e_1=(1,0,0)^T$ and $\theta_e=(-1)^{i+j+k}$. For a real
amplitude $\alpha$, set
\begin{equation}
 X_e(\xi)=T_e(\xi)
       +\frac{h\alpha}{2}\theta_e\xi_1^3\xi_2\xi_3e_1.
 \label{eq:checkerboard-geometry}
\end{equation}
The induced global map on the parameter cube is denoted by $X_h$;
its restriction to cell $e$ is $X_e\circ T_e^{-1}$.

Across a shared coordinate face, $\theta_e$ changes sign, as does the
corresponding odd local-coordinate factor in the perturbation. The other
local coordinates agree. Thus the perturbation has the same trace from
both cells, and $X_h$ is continuous. With $t$ denoting global parameter
coordinates, its derivative on each cell is
\[
 D_tX_h-I_3
 =\alpha\theta_e e_1\otimes
       (3\xi_1^2\xi_2\xi_3,\,\xi_1^3\xi_3,\,\xi_1^3\xi_2).
\]
Consequently, for $\delta=\sqrt{11}|\alpha|$,
\begin{equation}
 \|D_tX_h-I_3\|_2\le\delta,\qquad
 \det D_tX_h
 =1+3\alpha\theta_e\xi_1^2\xi_2\xi_3
 \ge1-3|\alpha|.
 \label{eq:checkerboard-geometry-bounds}
\end{equation}
These estimates hold throughout every cell.

Continuity and the piecewise derivative bound make $X_h(t)-t$
globally Lipschitz on the convex parameter cube, with constant at most
$\delta$. This follows by integrating along segments through the cells;
continuity covers segments lying on cell faces. If $\delta<1$, then
\begin{equation}
 (1-\delta)\|t-s\|_2
 \le\|X_h(t)-X_h(s)\|_2
 \le(1+\delta)\|t-s\|_2.
 \label{eq:checkerboard-bilipschitz}
\end{equation}
The map is therefore one-to-one. In the range used below,
$\delta<1/3$, the determinant bound is positive as well. The physical
reference domain is $\Omega_h=X_h((0,1)^3)$. Its cell Jacobians obey
\[
 \|D_\xi X_e\|_2\le\frac h2(1+\delta),\qquad
 \|(D_\xi X_e)^{-1}\|_2\le\frac{2}{h(1-\delta)}.
\]
Thus the reference maps have shape bounds independent of $h$ after the
usual affine scaling.

Use the same tensor Gauss4 rule, with all $64$ points per cell, in both
comparisons. The candidate space is also identical:
\begin{equation}
 \mathcal Z_h=
 \{z\in C^0(\overline\Omega_h;\R^3):
             z\circ X_e\in\mathcal S_3^3\text{ on every cell}\}.
 \label{eq:checkerboard-candidate-space}
\end{equation}
No potential trace is prescribed. Compare the unrestricted local
variation spaces $\widehat V_e^S=\mathcal S_3^3$ and
$\widehat V_e^Q=\mathbb Q_3^3$. Their cellwise exact subspaces of
\eqref{eq:checkerboard-candidate-space} are denoted by
$\mathcal Z_{S,h}^0$ and $\mathcal Z_{Q,h}^0$, respectively.
Write $\gamma_{S,h}^*(\alpha)$ and $\gamma_{Q,h}^*(\alpha)$ for the
two attained optima in \eqref{eq:optimal-compatible-margin}.
These are material margins at the selected quadrature points, tested
on all matrix increments before displacement boundary elimination.

By \cref{thm:serendipity-compatible-kernel}, the $\mathcal S_3$ exact class
contains each pullback $X_e$ in \eqref{eq:checkerboard-geometry}.
For the $\mathbb Q_3$ variations, \cref{thm:tensor-compatible-kernel} gives
$\mathbb Q_2$ as the scalar potential kernel. Intersecting with the fixed
$\mathcal S_3$ candidate space gives
\begin{equation}
 \mathcal S_3\cap\mathbb Q_2=\mathcal S_2,
 \qquad
 \mathcal Z_{Q,h}^0
 =\{z\in\mathcal Z_h:z\circ X_e\in\mathcal S_2^3
                           \text{ for every }e\}.
 \label{eq:enriched-exact-potential-class}
\end{equation}
Indeed, an $\mathcal S_3$ monomial with no exponent above two has at most one
exponent equal to two, which is exactly the $\mathcal S_2$ condition. The
variation enrichment removes the mixed cubic potential modes without
enlarging the candidate space.

\subsection{A quantitative restriction on the optimum}

Let $g_+>g_->0$ be the two positive univariate Gauss4 nodes, with
\[
 g_\pm^2=\frac{15\pm2\sqrt{30}}{35}.
\]
Define the fixed positive constant
\begin{equation}
 K_G=g_+^2(g_+^2-g_-^2)
     =\frac{12(4+\sqrt{30})}{245}.
 \label{eq:gauss-margin-constant}
\end{equation}

\begin{theorem}[Strict compatible-certificate loss]
\label{thm:enrichment-certificate-loss}
For the geometry, point set, and common candidate space above, assume
$0<|\alpha|<1/(3\sqrt{11})$ and put $\delta=\sqrt{11}|\alpha|$.
Then
\begin{equation}
 \gamma_{S,h}^*(\alpha)=c,
 \qquad
 c\frac{1-3\delta}{1-\delta}
 \le\gamma_{Q,h}^*(\alpha)
 \le c\left(1-\frac{K_G|\alpha|}{2(1+\delta)}\right)<c.
 \label{eq:checkerboard-margin-bounds}
\end{equation}
Both exact-potential classes therefore admit positive material margins.
For every fixed $0<\alpha_0<1/(3\sqrt{11})$, let
$\delta_0=\sqrt{11}\alpha_0$. Uniformly in $h$ and
$0<|\alpha|\le\alpha_0$,
\begin{equation}
 \frac{cK_G}{2(1+\delta_0)}|\alpha|
 \le c-\gamma_{Q,h}^*(\alpha)
 \le\frac{2c\sqrt{11}}{1-\delta_0}|\alpha|.
 \label{eq:checkerboard-linear-loss}
\end{equation}
Thus the optimal-margin loss has linear order in $|\alpha|$, with
constants independent of the mesh size. These constants are not
asserted to be optimal.
\end{theorem}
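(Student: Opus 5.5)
The plan is to treat the two exact classes separately. For $\gamma_{S,h}^*$ and for the lower bound on $\gamma_{Q,h}^*$ I will exhibit explicit potentials and use \cref{thm:optimal-compatible-margin}. For the upper bound on $\gamma_{Q,h}^*$ I will show that no potential with $\mathcal S_2$ pullbacks can match the cubic geometry mode at every Gauss point.

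\emph{Serendipity class.} By \cref{thm:serendipity-compatible-kernel}, each pullback $X_e=T_e+\tfrac{h\alpha}{2}\theta_e\xi_1^3\xi_2\xi_3e_1$ lies in $(\mathcal S_2\oplus\operatorname{span}\{x^3yz,\dots\})^3$, which is the $\mathcal S_3$ exact kernel. The map $X_h$ is continuous, so $\iota$ and $2c\iota$ belong to $\mathcal Z_{S,h}^0$. Then \eqref{eq:stress-free-translation-identity} gives margin $c$, and the upper bound in \eqref{eq:optimal-compatible-margin} gives $\gamma_{S,h}^*=c$.

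\emph{Lower bound for $\mathcal Z_{Q,h}^0$.} Take $z_h=2cX_h^{-1}$. It is continuous on $\overline\Omega_h$ because $X_h$ is bi-Lipschitz by \eqref{eq:checkerboard-bilipschitz}, and its pullback $2cT_e$ is affine, hence lies in $\mathcal S_2^3$. By \eqref{eq:enriched-exact-potential-class} it is feasible. Its coefficient is $R_z=2c(D_tX_h)^{-1}$. Using the upper bound $\nu(U)\le\|U\|_2$ of \cref{lem:cofactor-loss-bounds} together with $\|(D_tX_h)^{-1}-I_3\|_2\le\delta/(1-\delta)$, which is a Neumann-series consequence of \eqref{eq:checkerboard-geometry-bounds}, formula \eqref{eq:compatible-margin-formula} gives
$\gamma\ge c-2c\delta/(1-\delta)=c(1-3\delta)/(1-\delta)$. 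This is positive for $\delta<1/3$.

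\emph{Upper bound for $\mathcal Z_{Q,h}^0$ (the main step).} Fix a feasible $z$ and a cell $e$, and write $R_z=2cI_3+E$ at each Gauss point. Then
\[
J_{z}=R_zJ_X=c h\bigl(I_3+\alpha\theta_e e_1\otimes w\bigr)+EJ_X,\qquad w=(3\xi_1^2\xi_2\xi_3,\xi_1^3\xi_3,\xi_1^3\xi_2).
\]
The entry $\partial_{\xi_1}z_e^1$ is the derivative of an $\mathcal S_2$ polynomial. Its component odd in both $\xi_2$ and $\xi_3$ therefore has $\xi_1$-degree at most one, coming from $x^2yz$ and $xyz$.

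I will apply a discrete functional $\Phi$ built from Gauss-point values. It takes the $(\xi_2,\xi_3)$ odd–odd sign combination at the transverse nodes $\pm g_+$. It then takes the even-in-$\xi_1$ difference between the nodes $g_+$ and $g_-$. This functional annihilates every odd–odd polynomial of $\xi_1$-degree at most one, so $\Phi[\partial_1z_e^1]=0$.

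Applied to the geometry term, $\Phi$ returns a multiple of $ch\alpha\theta_e\cdot3g_+^2(g_+^2-g_-^2)$, which is proportional to $K_G$. Hence $\Phi[(EJ_X)_{11}]$ must equal that value up to sign. Bounding $|(EJ_X)_{11}|\le 3\max|E_{ij}|\cdot\tfrac h2(1+\delta)$ pointwise, and normalizing the weights of $\Phi$, I expect that some Gauss point has $\max_{ij}|E_{ij}|\ge cK_G|\alpha|/(1+\delta)$. Then $\nu(E)\ge\tfrac12\|E\|_{\max}$ from \cref{lem:cofactor-loss-bounds} gives the stated upper bound, which is strictly below $c$.

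The delicate part is the bookkeeping of the normalization constants, so that exactly $K_G/(2(1+\delta))$ emerges. I will first try to use only the row-$1$ entries of $E$ and only the node pairs with $|\xi_2|=|\xi_3|=g_+$. If the constant comes out weaker, I will instead weight the two $\xi_1$ nodes unevenly.

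\emph{Linear loss.} The linear bounds \eqref{eq:checkerboard-linear-loss} then follow by substituting $\delta\le\delta_0$ into $c-c(1-3\delta)/(1-\delta)=2c\delta/(1-\delta)$, with $\delta=\sqrt{11}|\alpha|$. No constant depends on $h$, because the $h$-factors cancel in $R_z=J_zJ_X^{-1}$.
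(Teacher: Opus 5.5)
Your proposal is correct and follows essentially the paper's proof. The same explicit potentials $2c\iota$ and $z(X_h(t))=2ct$ give $\gamma_{S,h}^*=c$ and the $\mathbb Q_3$ lower bound. The upper bound uses the same even-average difference between the Gauss nodes $g_\pm$ in the $(1,1)$ chain-rule entry, whose $\mathcal S_2$ part is affine in $\xi_1$, together with $\nu\ge\tfrac12\|\cdot\|_{\max}$.

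With averaging weights $1/4$ and $1/2$, your bookkeeping closes exactly, since $3ch|\alpha|K_G=|\Phi[(EJ_X)_{11}]|\le2\max|(EJ_X)_{11}|\le3h(1+\delta)\max\|E\|_{\max}$. The transverse odd--odd averaging is harmless but unnecessary, because all of $\partial_{\xi_1}(z_e)_1$ is affine in $\xi_1$.
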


\begin{proof}
The physical identity potential $z=2c\iota$ has pullbacks $2cX_e$,
which belong to $\mathcal Z_{S,h}^0$ by the serendipity kernel.
Its physical coefficient is $R_z=2cI_3$, so
\eqref{eq:stress-free-translation-identity} gives margin $c$.
\Cref{thm:optimal-compatible-margin} supplies the matching upper
bound, proving the $\mathcal S_3$ equality.

For an explicit $\mathbb Q_3$-compatible construction, define
\begin{equation}
 z(X_h(t))=2ct.
 \label{eq:checkerboard-affine-potential}
\end{equation}
Its pullbacks are $2cT_e$, so the potential is continuous and belongs
to $\mathcal Z_{Q,h}^0$. The chain rule gives
$R_z=2c(D_tX_h)^{-1}$. From
\eqref{eq:checkerboard-geometry-bounds},
\[
 \|(D_tX_h)^{-1}-I_3\|_2
 \le\frac\delta{1-\delta}.
\]
Using the upper estimate for $\nu$ in
\cref{lem:cofactor-loss-bounds},
\[
 \gamma_h(z)
 \ge c-\frac{2c\delta}{1-\delta}
 =c\frac{1-3\delta}{1-\delta}>0.
\]
This proves the lower bound on the $\mathbb Q_3$ optimum.

For the upper bound, take any $z\in\mathcal Z_{Q,h}^0$, put
$\gamma=\gamma_h(z)$, and write $d_\gamma=c-\gamma\ge0$.
At each selected point define $U=R_z-2cI_3$. Equation
\eqref{eq:compatible-margin-formula} and
\cref{lem:cofactor-loss-bounds} imply
\begin{equation}
 |U_{ij}|\le2d_\gamma,\qquad \|U\|_F\le6d_\gamma.
 \label{eq:enrichment-coefficient-bound}
\end{equation}
For a fixed cell set
$\widehat J_X=(2/h)D_\xi X_e=D_tX_h$, so
$\|\widehat J_X\|_2\le1+\delta$. The physical-to-parent chain rule
gives the exact identity
\begin{equation}
 \frac2hD_\xi z_e-2c\widehat J_X=U\widehat J_X.
 \label{eq:checkerboard-normalized-chain-rule}
\end{equation}
Every entry on the left has magnitude at most
$6d_\gamma(1+\delta)$ at the quadrature points.

Fix the two transverse coordinates at
$\xi_2=\xi_3=g_+$. Since $z_e\in\mathcal S_2^3$, the scalar
polynomial
\[
 p(x)=\frac2h\partial_{\xi_1}(z_e)_1(x,g_+,g_+)
\]
is affine in $x$. Its even averages at $g_+$ and $g_-$ agree.
The corresponding target entry of $2c\widehat J_X$ is
\[
 2c(1+3\alpha\theta_e x^2g_+^2).
\]
Its two even averages differ by
$6c\alpha\theta_e g_+^2(g_+^2-g_-^2)=6c\alpha\theta_e K_G$.
All four points $(\pm g_+,g_+,g_+)$ and
$(\pm g_-,g_+,g_+)$ belong to the fixed quadrature set.
The error bound from \eqref{eq:checkerboard-normalized-chain-rule}
therefore gives
\[
 6c|\alpha|K_G\le12d_\gamma(1+\delta),
 \qquad
 c-\gamma\ge\frac{cK_G|\alpha|}{2(1+\delta)}.
\]
This holds for every admitted potential, and hence for the attained
optimum. It proves the strict upper bound in
\eqref{eq:checkerboard-margin-bounds}.
The factor $2/h$ is retained throughout the chain rule; its
cancellation is what gives a bound in physical gradients independent
of $h$. Finally, the lower candidate bound gives
$c-\gamma_{Q,h}^*\le2c\delta/(1-\delta)$.
Bounding $\delta$ by $\delta_0$ in both denominators yields
\eqref{eq:checkerboard-linear-loss}.
\end{proof}

The theorem quantifies the restriction on every cellwise-exact
potential in the fixed candidate class. It also supplies positive
explicit choices for both variation spaces. No numerical optimizer
is needed to establish the strict loss or its linear order.
The pointwise constants and the geometric bounds are uniform in $h$.
Conversion to a physical $H^1$ reference bound additionally uses a
fixed Poincar\'e inequality and the quadrature comparison for squared
parent gradients, as developed in the reference-form analysis below.

The reference maps in this family oscillate on the cell scale. Their
uniform geometric bounds do not make this a convergence study for
approximation of a single fixed smooth geometry. The example has
fixed degree three; its refinement estimate introduces no additional
polynomial-order-uniform assertion.

Finally, a loss of this material certificate does not order the
eigenvalues of the assembled reference forms or of the complete
mechanical tangents. The later boundary representation makes the
distinction explicit: potentials with the same external trace give
the same exactly integrated gauge on a fixed variation space, and a
complete displacement clamp removes its boundary Hessian. The present
comparison establishes a restriction caused by variation-space
enrichment. It provides no solver-speed or general finite-strain
stability conclusion.

\section{The constant-coefficient comparison}
\label{sec:constant-margin-comparison}

The potential-space problem maximizes material margin subject to exact
compatibility. For a separate restricted-class comparison, take the
potential \(z(X)=RX\) with a fixed physical matrix \(R\in\mathbb M\).
The following comparison instead prescribes a positive margin and
asks how small the local quadrature defect can be.
It quantifies the limitation of this restricted class in a specified
basis and norm.

Retain the stress-free state, common material sum \(c>0\), Hessian
\(\mathcal H_c\), and metric \(\mathcal A\) of
\cref{sec:optimal-margin}. This separate comparison does not impose
membership of \(z(X)=RX\) in an otherwise prescribed global candidate space.

Fix one parent cell, its geometry $X_e$, and its error functional
$\mathcal L_e=\mathcal Q_e-\mathcal I_e$. Choose scalar basis functions
$\phi_1,\ldots,\phi_d$, where $d$ is their number, and use this same
basis for all three physical components of the local variations. For a
scalar field $f$, define the skew-symmetric matrix
\[
 \mathsf B(f)_{ab}
 =\mathcal L_e\!\left[
 D_\xi f\cdot
    (D_\xi\phi_a\times_{\R^3}D_\xi\phi_b)\right],
 \qquad 1\le a,b\le d.
\]
For a constant physical matrix $R\in\mathbb M$, let
$\mathsf E_R$ be the matrix of
$\mathcal T_e(RX_e;\cdot,\cdot)$ in the corresponding vector basis.
This is the cubature defect of the constant-coefficient cofactor gauge,
by \eqref{eq:parent-physical} and \eqref{eq:common-trilinear-defect}.
Write $X_{e,i}$ for component $i$ of the geometry, and define
\[
 (\mathsf S_X)_{ij}
 =\langle\mathsf B(X_{e,i}),\mathsf B(X_{e,j})\rangle_F,
 \qquad i,j\in\{1,2,3\},
\]
where $\langle A,B\rangle_F=\tr(A^TB)$ also denotes the matrix
Frobenius product for these $d$-by-$d$ matrices.

\begin{proposition}[Constant-gauge defect Gram identity]
\label{prop:constant-gauge-gram}
The matrix $\mathsf S_X$ is positive semidefinite, and
\begin{equation}
 \|\mathsf E_R\|_F^2=2\tr(R\mathsf S_XR^T),\qquad
 \|\mathsf E_{I_3}\|_F^2=2\tr\mathsf S_X.
 \label{eq:constant-gauge-gram}
\end{equation}
The identities hold for the fixed rule and basis, without a positivity
or exactness assumption on that rule.
\end{proposition}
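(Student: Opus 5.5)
The identity reduces to a block description of $\mathsf E_R$ in the vector basis. I will order that basis by component pairs, writing $u=\phi_a e_k$ and $v=\phi_b e_l$ with $k,l\in\{1,2,3\}$. The first step is to compute the scalar integrand $D_\xi z:(D_\xi u\times D_\xi v)$ for $z=RX_e$, so that $z_m=\sum_i R_{mi}X_{e,i}$. From the column formula for the cross product, each row of $D_\xi u\times D_\xi v$ is assembled from rows of $D_\xi u$ and $D_\xi v$. Row $m$ of the cross product of two matrices is $\tfrac12\sum_{k,l}\epsilon_{mkl}(\text{row}_k H\times_{\R^3}\text{row}_l L+\text{row}_k L\times_{\R^3}\text{row}_l H)$. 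Since $D_\xi u$ has only row $k$ equal to $D_\xi\phi_a$ and $D_\xi v$ has only row $l$ equal to $D_\xi\phi_b$, the product $D_\xi u\times D_\xi v$ has a single nonzero row, namely row $m$ with $\epsilon_{mkl}\neq0$. That row equals $\epsilon_{mkl}\,D_\xi\phi_a\times_{\R^3}D_\xi\phi_b$, and it vanishes when $k=l$. I will fix the sign convention by checking it against $E_{23}\times E_{31}=E_{12}$.

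Pairing with $D_\xi z$ and applying the linear functional $\mathcal L_e$ then gives the block formula
\[
(\mathsf E_R)_{(k,a),(l,b)}=\sum_m\epsilon_{mkl}\sum_iR_{mi}\,\mathsf B(X_{e,i})_{ab}.
\]
Writing $\mathsf M_m=\sum_iR_{mi}\mathsf B(X_{e,i})$, the $(k,l)$ block of $\mathsf E_R$ is $\epsilon_{mkl}\mathsf M_m$. These are skew blocks; the whole matrix is symmetric because the $(l,k)$ block is $-\epsilon_{mkl}\mathsf M_m$ and $\mathsf M_m^T=-\mathsf M_m$.

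Next I compute the Frobenius norm. For each $m$, exactly two ordered pairs $(k,l)$ have $\epsilon_{mkl}=\pm1$, and each contributes $\|\mathsf M_m\|_F^2$. Hence
\[
\|\mathsf E_R\|_F^2=2\sum_m\|\mathsf M_m\|_F^2=2\sum_m\sum_{i,j}R_{mi}R_{mj}(\mathsf S_X)_{ij}=2\tr(R\mathsf S_XR^T).
\]
Setting $R=I_3$ gives the second identity.

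Positive semidefiniteness of $\mathsf S_X$ is immediate, since it is the Gram matrix of the three matrices $\mathsf B(X_{e,i})$ under the Frobenius product. None of these steps uses positivity or exactness of the rule, because they rely only on linearity of $\mathcal L_e$. The only delicate point is the index and sign bookkeeping of the single-row structure in the cofactor polarization. Even there, the signs cancel in the squared norm, so only the count of two nonzero blocks per $m$ matters.
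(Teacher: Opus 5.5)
Your proposal is correct and follows essentially the same route as the paper. Both arguments isolate variations supported in single physical components, so that the cofactor polarization leaves a single complementary row. This gives off-diagonal blocks $\epsilon_{mkl}\mathsf M_m$, where $\mathsf M_m=\mathsf B((RX_e)_m)$ is skew; these are the paper's $B_m$, up to your Levi-Civita bookkeeping. Each $\mathsf M_m$ occupies exactly two blocks, and linearity of $\mathsf B$ then turns $2\sum_m\|\mathsf M_m\|_F^2$ into $2\tr(R\mathsf S_XR^T)$.

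One wording point: your row-wise cross-product formula is the transpose of the paper's column formula, valid because $\cof(H^T)=(\cof H)^T$, rather than something read off from the column formula directly.
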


\begin{proof}
The matrix $\mathsf S_X$ is a Gram matrix and hence positive
semidefinite. Put $f_i=(RX_e)_i$ and $B_i=\mathsf B(f_i)$.
Ordering the variation coordinates by physical component gives
\[
 \mathsf E_R=
 \begin{pmatrix}
 0&B_3&-B_2\\
 -B_3&0&B_1\\
 B_2&-B_1&0
 \end{pmatrix}.
\]
Indeed, two variations supported in distinct physical components
leave only the complementary row in their cofactor polarization.
Their scalar pairing is precisely the indicated entry of $B_i$,
with the sign of the component permutation.
Since $B_i^T=-B_i$, the displayed full matrix is symmetric, and
its squared Frobenius norm is $2\sum_i\|B_i\|_F^2$.
Linearity gives
$B_i=\sum_jR_{ij}\mathsf B(X_{e,j})$, proving
\eqref{eq:constant-gauge-gram}. A reordering from component-major to
node-major coordinates preserves this norm.
\end{proof}

For a nonnegative pointwise volumetric term, fix $\kappa\ge0$ and
write
\[
 \mathcal G_R^\kappa[H,L]
 =\mathcal H_c[H,L]+\mathcal C_R[H,L]
      +\kappa(\tr H)(\tr L).
\]
The last term is the resting Hessian of
$\kappa(\det F-1)^2/2$. At $\kappa=0$ this is the isochoric
translated form.

\begin{theorem}[Minimum local defect at prescribed coercivity]
\label{thm:optimal-rest-defect}
For $0<\gamma\le c$ and any fixed $\kappa\ge0$,
\begin{equation}
 \min_{\mathcal G_R^\kappa\succeq\gamma\mathcal A}
       \|\mathsf E_R\|_F
 =2\gamma\|\mathsf E_{I_3}\|_F.
 \label{eq:minimum-constant-gauge-defect}
\end{equation}
The minimum is attained by $R=2\gamma I_3$. No coefficient satisfies
the prescribed inequality for $\gamma>c$.
\end{theorem}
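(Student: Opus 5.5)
The plan is to reduce the matrix inequality \(\mathcal G_R^\kappa\succeq\gamma\mathcal A\) to a simple spectral condition on \(\sym R\) by testing well-chosen increments. Combined with the Gram identity of \cref{prop:constant-gauge-gram}, that condition gives the lower bound on \(\|\mathsf E_R\|_F\). A matching explicit coefficient then shows attainment.

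\emph{Step 1 (skew tests).} For a skew increment \(H\) with axial vector \(w\), we have \(H_{\rm d}=0\) and \(\tr H=0\). Hence \(\mathcal H_c[H,H]=0\) and the \(\kappa\)-term vanishes. Moreover \(\|H\|_F^2=2|w|^2\), so \(\mathcal A[H,H]=4|w|^2\). A direct computation gives \(\cof H=ww^T\), hence \(\mathcal C_R[H,H]=2\,R:\cof H=2w^TRw\). Feasibility therefore forces \(2w^TRw\ge4\gamma|w|^2\) for all \(w\), that is
\[
\sym R\succeq 2\gamma I_3 .
\]
This is the key necessary condition, and it does not involve \(c\) or \(\kappa\).

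\emph{Step 2 (lower bound on the defect).} By \cref{prop:constant-gauge-gram}, \(\|\mathsf E_R\|_F^2=2\tr(R\mathsf S_XR^T)=2\|R\mathsf S_X^{1/2}\|_F^2\), with \(\mathsf S_X\succeq0\). Write \(R=2\gamma I_3+P+W\), where \(P=\sym R-2\gamma I_3\succeq0\) and \(W\) is skew. Expanding the square gives
\[
\|R\mathsf S_X^{1/2}\|_F^2=4\gamma^2\tr\mathsf S_X+4\gamma\tr\bigl(\mathsf S_X(P+W)\bigr)+\|(P+W)\mathsf S_X^{1/2}\|_F^2 .
\]
The trace \(\tr(\mathsf S_XW)\) vanishes because \(\mathsf S_X\) is symmetric and \(W\) is skew. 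The trace \(\tr(\mathsf S_XP)\) is nonnegative as a product of positive semidefinite matrices, and the last term is nonnegative. Hence \(\|\mathsf E_R\|_F^2\ge 8\gamma^2\tr\mathsf S_X=4\gamma^2\|\mathsf E_{I_3}\|_F^2\), using the second identity in \eqref{eq:constant-gauge-gram}.

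\emph{Step 3 (attainment).} For \(R=2\gamma I_3\), the defect is linear in \(R\), so \(\|\mathsf E_R\|_F=2\gamma\|\mathsf E_{I_3}\|_F\). Feasibility follows from \eqref{eq:stress-free-translation-identity} by convex splitting:
\[
\mathcal H_c+\mathcal C_{2\gamma I_3}=\tfrac{\gamma}{c}\bigl(\mathcal H_c+\mathcal C_{2cI_3}\bigr)+\bigl(1-\tfrac{\gamma}{c}\bigr)\mathcal H_c=\gamma\mathcal A+\bigl(1-\tfrac{\gamma}{c}\bigr)\mathcal H_c .
\]
Since \(\gamma\le c\) and \(\mathcal H_c\succeq0\), the right side is \(\succeq\gamma\mathcal A\). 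The added term \(\kappa(\tr H)^2\ge0\) preserves this.

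\emph{Step 4 (no coefficient for \(\gamma>c\)).} This is the only place where \(c\) enters, and it needs a second test increment to play against Step 1. I would use the trace-free symmetric increment \(H=E_{12}+E_{21}\). Here \(\cof H=-E_{33}\), so \(\mathcal C_R[H,H]=-2R_{33}\). Also \(\|H_{\rm d}\|_F^2=2\), so \(\mathcal H_c[H,H]=8c\), and \(\mathcal A[H,H]=4\). Feasibility gives \(8c-2R_{33}\ge4\gamma\). Step 1 gives \(R_{33}\ge2\gamma\), so \(8c\ge8\gamma\), which is impossible if \(\gamma>c\). The main care points are the sign conventions in \(\cof\) for these two test increments; the remaining steps are routine.
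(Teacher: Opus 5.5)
Your proof is correct and follows the paper's route: the skew test gives $\sym R\succeq2\gamma I_3$, the Gram identity of \cref{prop:constant-gauge-gram} turns this into the trace bound, and $R=2\gamma I_3$ attains it via \eqref{eq:stress-free-translation-identity}. There are two minor differences. The paper reaches the trace bound through $R^TR\succeq4\gamma^2I_3$ by Cauchy--Schwarz instead of your symmetric/skew expansion. It also excludes $\gamma>c$ with the single rank-one test $H=E_{12}$, on which $\mathcal C_R$ and the volumetric term both vanish, giving $2c\ge2\gamma$ directly; so, contrary to your remark, no second increment paired with Step~1 is needed.
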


\begin{proof}
For $a\in\R^3$, let $H_a$ be the skew matrix defined by
$H_ax=a\times_{\R^3}x$. Direct calculation gives
$\cof H_a=aa^T$, $\tr H_a=0$, and
$\mathcal A[H_a,H_a]=4|a|^2$. The material Hessian vanishes on
$H_a$, so the assumed inequality gives
\[
 2a^TRa\ge4\gamma|a|^2.
\]
Thus $(R+R^T)/2\succeq2\gamma I_3$. Cauchy--Schwarz implies
$\|Ra\|_2\ge2\gamma\|a\|_2$, or
$R^TR\succeq4\gamma^2I_3$. Using the positive square root of the
Gram matrix in \eqref{eq:constant-gauge-gram},
\[
 \tr(R\mathsf S_XR^T)
 =\tr(\mathsf S_X^{1/2}R^TR\mathsf S_X^{1/2})
 \ge4\gamma^2\tr\mathsf S_X.
\]
This gives the lower bound in
\eqref{eq:minimum-constant-gauge-defect}.

For $R=2\gamma I_3$, \eqref{eq:stress-free-translation-identity}
gives
\[
 (\mathcal G_R^\kappa-\gamma\mathcal A)[H,H]
 =4(c-\gamma)\|H_{\rm d}\|_F^2+\kappa(\tr H)^2\ge0.
\]
Linearity of $\mathsf E_R$ proves attainment of the bound.
Finally, for the matrix unit $E_{12}$, which has a single unit entry
in position $(1,2)$, the cofactor translation and volumetric term vanish.
Its material and reference values are $2c$ and $2$, respectively,
so the inequality requires $\gamma\le c$.
\end{proof}

The norm in this theorem belongs to the stated common scalar basis and
the full component space. Component-dependent boundary elimination,
a different matrix norm, or an assembled-only positivity requirement
defines a different optimization problem. In the Tet10
midside-deviation coordinates of \cref{cor:faithful-jet},
$\mathsf E_{I_3}=\mathscr R(d_e(X))$, where $\mathscr R$ is the
eighteen-coordinate defect Hessian and $d_e(X)$ contains the six
reference midside deviations. Equation \eqref{eq:jet-exact-norm}
therefore recovers the Tet10 minimum in those coordinates. With the full
local quadratic variation space, every genuinely curved Tet10 cell has a
positive minimum at positive margin.

The associated geometry restriction follows directly from the kernel
classification. For full local $\mathbb Q_p^3$ variations with $p\ge2$, geometry
$X_e\in\mathbb Q_p^3$, and tensor Gauss with $n\ge p+1$, put
\[
 r=\min\{p,2(n-p)\}.
\]
By \cref{thm:tensor-compatible-kernel}, the constant coefficient $R$
is compatible precisely when $RX_e\in\mathbb Q_r^3$. Equivalently,
$R$ annihilates every reference-coordinate coefficient with a
coordinate degree above $r$. A positive prescribed margin makes $R$
invertible by the skew test, so such a compatible coefficient exists
at rest precisely when $X_e\in\mathbb Q_r^3$; the identity coefficient
$2\gamma I_3$ supplies the converse for $\gamma\le c$.
The usual $n=p+1$ choice has $r=2$. Overintegration can enlarge the
class. The Tet10 alignment criterion in
\cref{prop:exact-gauge-alignment} is its separate simplex counterpart.

\section{Complete reference forms and boundary compensation}
\label{sec:complete-reference-forms}
\label{sec:compatible-potentials}
\label{sec:defect-corrected-core}

A compatible potential and its material margin specify a local
reference construction. We now identify its assembled operator and
the compensation that preserves the complete equation.
Potential-generated null Lagrangians supply
the exact boundary identity
\citep{edelen1986null,olver1988structure,kupferman2019piola}.
Their contribution to natural boundary conditions is consequential
\citep{olver2024boundary}. We use this identity to represent the complete
stored-energy Hessian from \eqref{eq:complete-pressure-energy}.
No physical energy or boundary condition is replaced by its gauge.

\subsection{The exact and numerical added forms}

Use the functionals \(\mathscr J_z^I,\mathscr J_z^Q\) and their
Hessians \(\mathcal C_h^I[z_h],\mathcal C_h^Q[z_h]\) from
\cref{sec:gauge-design-levels}. Cellwise compatibility makes the
two assembled Hessians equal.

This last statement concerns Hessians. If the current parent maps lie in
the same linear spaces $\widehat V_e$ used to define the kernel, equality
of the full functionals follows as well, because
\[
\mathcal L_e[J_z:\cof J_y]
=\tfrac12\mathcal T_e(z_e;y_e,y_e).
\]
For a more general affine family of current maps, Hessian equality permits
a remaining affine function of the free displacement coefficients.
The reference-form conclusions below require the Hessian identity.
They always retain the original energy and residual.

Let $n_X$ be the outward unit normal on the physical reference boundary.
Let $R_\Gamma$ extract the free external displacement trace from a global
variation vector. A boundary matrix is understood in these trace
coordinates; interior degrees of freedom are those in $\ker R_\Gamma$.

\begin{theorem}[Compatible boundary reference form]
\label{thm:compatible-boundary-gauge}
Assume the fields are continuous and sufficiently piecewise regular for
the displayed boundary integrals; the polynomial finite-element families
considered here satisfy this condition. The exact gauge functional obeys
\begin{equation}
\mathscr J_z^I(y_h)
=\int_{\Omega_h}\nabla_Xz_h:\cof(\nabla_Xy_h)\,dX
=\int_{\partial\Omega_h}
 z_h\cdot\{\cof(\nabla_Xy_h)n_X\}\,dS.
\label{eq:potential-boundary-functional}
\end{equation}
There is a symmetric matrix $B_\Gamma[z_h]$, independent of $y_h$, such that
\begin{equation}
\mathcal C_h^I[z_h]=R_\Gamma^TB_\Gamma[z_h]R_\Gamma.
\label{eq:exact-boundary-gauge}
\end{equation}
If $z_h\in\mathcal Z_h^0$, define the numerical reference form
\[
G_{z,h}(y_h)=K_h(y_h)+\mathcal C_h^Q[z_h].
\]
Then
\begin{equation}
K_h(y_h)=G_{z,h}(y_h)-R_\Gamma^TB_\Gamma[z_h]R_\Gamma.
\label{eq:compatible-complete-identity}
\end{equation}
Whenever $G_{z,h}(y_h)\succ0$, the complete nonpositive index is at most
$\rank R_\Gamma$, and the interior block is positive definite.
With a full displacement clamp, $K_h(y_h)=G_{z,h}(y_h)$.
\end{theorem}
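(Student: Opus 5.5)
The plan has four steps: pull back the exact functional cellwise, integrate by parts with the Piola identity, differentiate the resulting boundary functional twice in the displacement, and then use compatibility to read off the inertia and clamp statements.

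\emph{Step 1: cellwise pullback.} By \Cref{thm:dual-pullback}, together with $\det J_X\,\cof F=\cof J_y\,J_X^T$ and $R_zJ_X=J_z$, we have
\[
J_z:\cof J_y=\det J_X\,R_z:\cof F .
\]
Summing the exact parent integrals $\mathcal I_e$ and changing variables on each cell therefore gives the first equality in \eqref{eq:potential-boundary-functional}.

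\emph{Step 2: Piola divergence identity.} For smooth $y$ on each cell, $\operatorname{Div}\cof(\nabla_Xy)=0$ row by row. Hence
\[
\nabla_Xz:\cof\nabla_Xy=\operatorname{Div}\bigl((\cof\nabla_Xy)^Tz\bigr),
\]
and integration by parts on each cell gives
\[
\int_{\partial K_e}z\cdot(\cof\nabla_Xy)\,n\,dS .
\]
The Piola identity holds for piecewise polynomial, hence $H^2$-smooth on each cell, maps. It can be checked by density of smooth maps, or directly from the cross-product form, since the first column of $\cof$ is $\partial_2y\times\partial_3y$ and its divergence telescopes by equality of mixed partials.

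\emph{Step 3: cancellation on interior faces.} On a shared face, $z_h$ is continuous. The quantity $(\cof\nabla_Xy)n$ equals the tangential area vector $\partial_{s_1}y\times\partial_{s_2}y$ in face coordinates, so it depends only on the trace of $y_h$ on that face. That trace is common to both cells because $y_h$ is conforming. The two opposite normals then cancel the face contributions, leaving only the external boundary integral. This tangential-only dependence is the point that needs care, and it is the step I expect to be the main obstacle. I would state it via the cofactor-polarized cross product of the two tangential derivatives. Since this needs only the tangential parameterization, it also covers curved faces.

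\emph{Step 4: second variation and boundary matrix.} The boundary functional is a quadratic form in the trace of $y_h$, because the cofactor is quadratic and $z_h$ is fixed. Its second derivative in directions $u,v$ is
\[
\int_{\partial\Omega_h}z_h\cdot(\partial_{s_1}u\times\partial_{s_2}v+\partial_{s_1}v\times\partial_{s_2}u)\,dS\cdot\sigma,
\]
with the orientation factor $\sigma$ from the face parameterization. This expression depends only on the external traces of $u$ and $v$, and it is independent of $y_h$. On $\Gamma_D$ the traces of the variations vanish, so the form depends only on $R_\Gamma u$ and $R_\Gamma v$. Defining $B_\Gamma[z_h]$ as its symmetric matrix in trace coordinates gives \eqref{eq:exact-boundary-gauge}.

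\emph{Step 5: compatibility and consequences.} If $z_h\in\mathcal Z_h^0$, then $\mathcal C_h^Q-\mathcal C_h^I$ is assembled from $\mathcal T_e(z_e;u_e,v_e)=0$, so $\mathcal C_h^Q=\mathcal C_h^I$, and \eqref{eq:compatible-complete-identity} follows by subtraction. Now suppose $G_{z,h}\succ0$. On $\ker R_\Gamma$ we have $K_h=G_{z,h}$, so the interior block is positive definite. This gives $n_+(K_h)\ge\dim\ker R_\Gamma$ by Courant--Fischer, and therefore the nonpositive index is at most $\operatorname{rank}R_\Gamma$. Finally, under a full clamp the free trace is empty, so $R_\Gamma=0$ and $K_h=G_{z,h}$.
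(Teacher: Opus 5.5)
Your proposal is correct and follows essentially the same route as the paper. The shared steps are the cellwise pullback, the row-wise Piola identity with interior-face cancellation (which comes from the tangential dependence of the cofactor flux and continuity of $z_h$), descent of the trace-only Hessian to the free trace, and cellwise compatibility. Your Courant--Fischer count $n_+(K_h)\ge\dim\ker R_\Gamma$ is the dual form of the paper's observation that trace extraction is injective on any nonpositive subspace. One small notational fix: the flux identity should read $(\cof\nabla_Xy)n\,dS=\pm\,\partial_{s_1}y\times\partial_{s_2}y\,ds_1\,ds_2$, so the surface-measure factor belongs with the parametric measure rather than as a separate $dS$.
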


\begin{proof}
Cofactor multiplicativity gives
$\det J_X\cof F=(\cof J_y)J_X^T$.
Taking its Frobenius product with $R_z=J_zJ_X^{-1}$ proves
\[
\det J_X\,R_z:\cof F=J_z:\cof J_y.
\]
This gives the first equality in \eqref{eq:potential-boundary-functional}.
The row-wise Piola identity is
$\operatorname{Div}_X\cof(\nabla_Xy_h)=0$.
The product rule therefore converts
$\nabla_Xz_h:\cof(\nabla_Xy_h)$ to a divergence on each cell.
Its normal cofactor flux depends on tangential derivatives of the
deformation trace. The common trace and opposite face orientations give
opposite fluxes on an internal face. Continuity of $z_h$ cancels those
contributions, leaving the stated external integral.

The boundary functional depends on $y_h$ only through its boundary trace.
Its second variation consequently vanishes whenever either argument has
zero full boundary trace. Essential conditions have already removed
variations on $\Gamma_D$. The bilinear form therefore descends to the
free trace space and has the matrix representation
\eqref{eq:exact-boundary-gauge}. Its symmetry follows from being a
Hessian, and its independence of $y_h$ follows from cofactor quadraticity.

Cellwise compatibility makes $\mathcal C_h^Q[z_h]=\mathcal C_h^I[z_h]$,
which proves \eqref{eq:compatible-complete-identity}.
On $\ker R_\Gamma$, the complete form equals $G_{z,h}$.
If the latter is positive, trace extraction is injective on any subspace
where $K_h$ is nonpositive. Such a subspace has dimension at most
$\rank R_\Gamma$. The interior block is the positive restriction to
$\ker R_\Gamma$. The full-clamp case has no free external trace.
\end{proof}

The local kernel is tested before displacement boundary elimination.
Additional assembled cancellations may make
$\mathcal C_h^Q[z_h]=\mathcal C_h^I[z_h]$ for potentials outside that
cellwise class. The theorem supplies a constructive sufficient route
through the classified kernel; it does not characterize every
assembled-only cancellation.

A discontinuous potential leaves interface contributions in the exact
boundary calculation. They can be retained explicitly, but they are not
removed by the continuity argument above. The current results use the
continuous candidate space.

\subsection{Constant coefficients and explicit reference-defect compensation}

For a constant physical coefficient $Z\in\mathbb M$, the associated
potential is $z_h(X)=ZX$. Its parent maps are $ZX_e$.
In this subsection it is sufficient to consider this smooth potential
directly; it need not belong to a separately chosen optimization class.
Denote its exact and numerical gauge Hessians by
$C_{Z,h}^I$ and $C_{Z,h}^Q$, and put
\[
\Delta_{Z,h}=C_{Z,h}^Q-C_{Z,h}^I,\qquad
G_h(y_h)=K_h(y_h)+C_{Z,h}^Q.
\]
The matrices $C_{Z,h}^I$, $C_{Z,h}^Q$, and $\Delta_{Z,h}$ depend on
the reference geometry, spaces, quadrature and coefficient, and are fixed
as $y_h$ changes.

\begin{theorem}[Static compatible constant coefficient]
\label{thm:static-gauge-boundary}
If $\mathcal T_e(ZX_e;u_e,v_e)=0$ for every local variation pair on
every cell,
then $\Delta_{Z,h}=0$ and
\[
K_h(y_h)=G_h(y_h)-R_\Gamma^TB_\Gamma(Z)R_\Gamma.
\]
The positive-core, trace-index and full-clamp conclusions of
\cref{thm:compatible-boundary-gauge} apply.
For quadratic tetrahedral geometry and variations with the symmetric
four-point rule, the local condition is precisely that $ZX_e$ be
parent-affine.
\end{theorem}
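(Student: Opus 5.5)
The plan is to reduce everything to \cref{thm:compatible-boundary-gauge}, applied to the specific potential $z_h(X)=ZX$.

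\textbf{Step 1: reduction.} The potential $ZX$ is smooth and globally continuous. Its pullbacks are $z_e=ZX_e$, and by \eqref{eq:parent-physical} its physical coefficient is $R_z=Z$. The difference $C_{Z,h}^Q-C_{Z,h}^I$ is assembled cellwise from the defects $\mathcal T_e(ZX_e;u_e,v_e)$. The hypothesis makes every such defect vanish on every local variation pair, so every assembled entry of $\Delta_{Z,h}$ is zero. Only the cellwise exactness is used here; membership in a candidate space $\mathcal Z_h$ plays no role, since the proof of \cref{thm:compatible-boundary-gauge} used only continuity and cellwise compatibility of $z_h$.

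\textbf{Step 2: boundary representation and the complete identity.} The boundary representation \eqref{eq:potential-boundary-functional}--\eqref{eq:exact-boundary-gauge} applies to any continuous, piecewise regular potential, so it holds for $ZX$. This gives $C_{Z,h}^I=R_\Gamma^TB_\Gamma(Z)R_\Gamma$, with $B_\Gamma(Z):=B_\Gamma[ZX]$ symmetric and independent of $y_h$. Combining this with $\Delta_{Z,h}=0$ and $G_h=K_h+C_{Z,h}^Q$ yields the displayed identity. The positive-core, trace-index and full-clamp conclusions are then verbatim the last paragraph of the proof of \cref{thm:compatible-boundary-gauge}: they depend only on the identity and on $G_h\succ0$.

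\textbf{Step 3: the Tet10 characterization.} Here I would invoke the quadratic-tetrahedron specialization of \cref{sec:compatible-kernels}. For $\widehat V=\widehat Z=\mathbb P_2^3$ with $\Qfour$, the local compatible space is exactly $\mathbb P_1^3$. Since $X_e\in\mathbb P_2^3$, the pullback $ZX_e$ lies in the candidate space $\mathbb P_2^3$. The condition $\mathcal T_e(ZX_e;\cdot,\cdot)=0$ on all pairs therefore holds iff $ZX_e\in\mathbb P_1^3$, that is, iff $ZX_e$ is parent-affine.

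\textbf{Main obstacle.} The only nontrivial point is the necessity direction in Step 3. It rests on injectivity of the defect Hessian on quadratic-modulo-affine potentials, which is the opposite-edge representation of \cref{cor:faithful-jet}; this is equivalently the alignment criterion of \cref{prop:exact-gauge-alignment}. I would cite that result rather than reprove it. If a self-contained check were wanted, I would apply the complementary-component reduction from \cref{prop:constant-gauge-gram} to isolate each scalar component of $ZX_e$. I would then test against pairs of quadratic edge bubbles, so that the $\Qfour$ degree-three defect sees each midside deviation separately.
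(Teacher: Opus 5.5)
Your proposal is correct and follows essentially the same route as the paper: apply \cref{thm:compatible-boundary-gauge} to the fixed continuous potential $z_h=Z\iota$, where membership in a prescribed candidate class is irrelevant and $Z$ may be singular, and let the cellwise hypothesis give $\Delta_{Z,h}=0$. The only difference is citation order in the Tet10 step: the paper invokes \cref{prop:exact-gauge-alignment} directly, while you cite the Section 3 quadratic-tetrahedron specialization, which rests on the same opposite-edge faithfulness of \cref{cor:faithful-jet}.
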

\begin{proof}
Use the fixed potential $z_h=Z\iota$ in
\cref{thm:compatible-boundary-gauge}. The local hypothesis makes its
gauge Hessian exact. The Tet10 characterization is
\cref{prop:exact-gauge-alignment}; the coefficient is allowed to be
singular. No positivity assumption is needed for the algebraic identity.
\end{proof}

For a general fixed continuous potential, define
\[
\Delta_h[z_h]=\mathcal C_h^Q[z_h]-\mathcal C_h^I[z_h],
\qquad
\widehat G_{z,h}(y_h)
=K_h(y_h)+\mathcal C_h^Q[z_h]-\Delta_h[z_h].
\]

\begin{theorem}[Reference-only defect-corrected form]
\label{thm:reference-corrected-core}
The exact identity
\begin{equation}
K_h(y_h)=\widehat G_{z,h}(y_h)
             -R_\Gamma^TB_\Gamma[z_h]R_\Gamma
\label{eq:defect-corrected-complete-identity}
\end{equation}
holds without cellwise compatibility. Let $A_h\succ0$ be a specified
reference matrix and let $\gamma>0$. If
\[
K_h(y_h)+\mathcal C_h^Q[z_h]\succeq\gamma A_h,\qquad
\gamma A_h-\Delta_h[z_h]\succ0,
\]
then $\widehat G_{z,h}(y_h)\succ0$.
Its complete nonpositive-index and interior-block consequences are the
same as in \cref{thm:compatible-boundary-gauge}.
Both the defect and its exact boundary compensation may be precomputed
for fixed reference geometry and potential.
\end{theorem}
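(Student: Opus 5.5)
The identity \eqref{eq:defect-corrected-complete-identity} is pure algebra, and I will obtain it from \cref{thm:compatible-boundary-gauge} without using compatibility. By definition,
\[
\widehat G_{z,h}(y_h)=K_h(y_h)+\mathcal C_h^Q[z_h]-\Delta_h[z_h]=K_h(y_h)+\mathcal C_h^I[z_h].
\]
The first part of \cref{thm:compatible-boundary-gauge} needs only continuity of $z_h$ and piecewise regularity, not cellwise exactness. It gives the representation \eqref{eq:exact-boundary-gauge}:
\[
\mathcal C_h^I[z_h]=R_\Gamma^TB_\Gamma[z_h]R_\Gamma.
\]
Subtracting this from $\widehat G_{z,h}(y_h)$ yields \eqref{eq:defect-corrected-complete-identity}. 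The point to stress in the write-up is that the Piola divergence argument and the interior-face cancellation act on the exact integrals $\mathcal I_e$. The quadrature therefore never enters that representation, and all quadrature error sits in $\Delta_h[z_h]$.

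For positivity, I rewrite
\[
\widehat G_{z,h}(y_h)=\bigl(K_h(y_h)+\mathcal C_h^Q[z_h]-\gamma A_h\bigr)+\bigl(\gamma A_h-\Delta_h[z_h]\bigr).
\]
The first bracket is positive semidefinite by hypothesis and the second is positive definite. The sum is therefore positive definite: for any $v\ne0$ the quadratic value is at least the strictly positive value of the second bracket.

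The index and interior-block consequences then repeat the final paragraph of the proof of \cref{thm:compatible-boundary-gauge}, with $G_{z,h}$ replaced by $\widehat G_{z,h}$. On $\ker R_\Gamma$ we have $K_h=\widehat G_{z,h}\succ0$, which gives the interior block. Now take any subspace on which $K_h$ is nonpositive. It meets $\ker R_\Gamma$ only in zero, so $R_\Gamma$ is injective on it and its dimension is at most $\rank R_\Gamma$. For the full clamp, $R_\Gamma=0$ and $K_h=\widehat G_{z,h}$.

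Precomputability follows because $\mathcal C_h^Q$, $\mathcal C_h^I$ and $B_\Gamma$ are Hessians of functionals quadratic in $y_h$ with the fixed potential, so they are independent of $y_h$. I do not expect a real obstacle. The only step needing care is confirming that the boundary representation in \cref{thm:compatible-boundary-gauge} was proved for the exact Hessian $\mathcal C_h^I$ of an arbitrary continuous potential, rather than only under the compatibility hypothesis. I will re-read that proof to check that the compatibility assumption enters only at the step $\mathcal C_h^Q=\mathcal C_h^I$.
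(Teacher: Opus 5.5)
Your proposal is correct and follows the paper's proof essentially step for step. The paper also writes $\widehat G_{z,h}=K_h+\mathcal C_h^I[z_h]$ by definition, applies the compatibility-free representation \eqref{eq:exact-boundary-gauge}, derives $\widehat G_{z,h}\succeq\gamma A_h-\Delta_h[z_h]\succ0$ from the two hypotheses, reuses the trace-injectivity argument of \cref{thm:compatible-boundary-gauge}, and cites the $y_h$-independence of the fixed-potential cofactor Hessians for precomputation; your reading that compatibility enters that earlier proof only at the step $\mathcal C_h^Q[z_h]=\mathcal C_h^I[z_h]$ is accurate.
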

\begin{proof}
By definition $\widehat G_{z,h}=K_h+\mathcal C_h^I[z_h]$, so
\eqref{eq:exact-boundary-gauge} proves
\eqref{eq:defect-corrected-complete-identity}.
The inequalities give
$\widehat G_{z,h}\succeq\gamma A_h-\Delta_h[z_h]\succ0$.
The trace argument is identical to that in
\cref{thm:compatible-boundary-gauge}.
The fixed-potential cofactor Hessians are independent of $y_h$, proving
the precomputation assertion.
\end{proof}

For $z_h=Z\iota$, this is the constant-coefficient corrected core
$\widehat G_h=G_h-\Delta_{Z,h}$ used in the finite-strain examples.
The correction is exact and preserves both $K_h$ and its quadrature.
A cellwise estimate may prove the required global inequality. Failure
of a cellwise sufficient estimate leaves room for positivity after
conforming assembly and essential constraints.

\subsection{The full potential trace and operator equivalence}

\begin{corollary}[Cellwise exactness and boundary equivalence]
\label{cor:compatible-maximality}
Fixed continuous potentials with the same full external trace have the
same exactly integrated global gauge Hessian on the same variation space.
If both are cellwise compatible, their numerical reference forms coincide.
More generally, a compatible potential and a defect-corrected potential
with that trace give the same assembled reference form.
For Tet10 candidates and variations in $[\mathbb P_2]^3$, the local
compatible class is exactly $[\mathbb P_1]^3$.
\end{corollary}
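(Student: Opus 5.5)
The plan is to derive all four assertions from \cref{thm:compatible-boundary-gauge} and \cref{thm:reference-corrected-core}. Only the last assertion needs anything beyond the boundary representation, and it comes from the quadratic-tetrahedron specialization already stated.

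\emph{First assertion.} Take two fixed continuous potentials $z_h,\tilde z_h$ with $z_h=\tilde z_h$ on $\partial\Omega_h$. By \eqref{eq:potential-boundary-functional},
\[
\mathscr J_z^I(y_h)=\int_{\partial\Omega_h}z_h\cdot\{\cof(\nabla_Xy_h)n_X\}\,dS .
\]
This integral depends on the potential only through its full external trace. Hence $\mathscr J_z^I=\mathscr J_{\tilde z}^I$ as functions of the global displacement coefficients on the common variation space. Taking second derivatives gives $\mathcal C_h^I[z_h]=\mathcal C_h^I[\tilde z_h]$, and so also equal boundary matrices $B_\Gamma[z_h]=B_\Gamma[\tilde z_h]$ in \eqref{eq:exact-boundary-gauge}. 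The identity is linear in the potential. So I would apply the boundary theorem to the difference $z_h-\tilde z_h$, which has zero external trace; its exact functional then vanishes identically, which gives the same conclusion.

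\emph{Second and third assertions.} If both potentials are cellwise compatible, then $\mathcal C_h^Q=\mathcal C_h^I$ for each of them, and the reference forms satisfy
\[
G_{z,h}=K_h+\mathcal C_h^I[z_h]=K_h+\mathcal C_h^I[\tilde z_h]=G_{\tilde z,h}.
\]
Now suppose $z_h$ is compatible and $\tilde z_h$ is arbitrary with the same trace. By definition $\widehat G_{\tilde z,h}=K_h+\mathcal C_h^Q-\Delta_h=K_h+\mathcal C_h^I[\tilde z_h]$, as noted in the proof of \cref{thm:reference-corrected-core}. Both forms therefore equal $K_h+\mathcal C_h^I$ of the common-trace class, so they coincide.

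\emph{Fourth assertion.} This is the Tet10 classification. Sufficiency comes from degree counting. For affine $z$ and quadratic $u,v$, the integrand $D_\xi z:(D_\xi u\times D_\xi v)$ has total degree at most two, and $\Qfour$ integrates degree two exactly. For necessity I would use the opposite-edge representation of \cref{cor:faithful-jet}: the map from quadratic-modulo-affine coordinates (the six midside deviations) to the defect Hessian is injective. Vanishing of $\mathcal T_e(z;\cdot,\cdot)$ on all variation pairs then forces every midside deviation to vanish, so $z$ is parent-affine. I would apply this componentwise, first using the complementary-component reduction from the proof of \cref{thm:tensor-compatible-kernel} to isolate each scalar component.

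\emph{Main obstacle.} The delicate point is the first step when the displacement space carries essential conditions. I must check that equality of the functionals on the full conforming space gives equality of the Hessians on the restricted space. This holds because the restriction is linear: the Hessian of a restriction is the restriction of the Hessian. The injectivity behind the Tet10 claim is assumed from the appendix, so it is not an obstacle here.
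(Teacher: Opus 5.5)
Your proposal is correct and follows the paper's proof. Equal traces give equal boundary functionals and hence equal exact Hessians; the compatible and defect-corrected references both equal $K_h$ plus that common exact Hessian; and the Tet10 class follows from degree exactness together with the injectivity of $d\mapsto\mathscr R(d)$ from \cref{cor:faithful-jet}, applied to $z_e$ in the first slot. Your componentwise reduction is harmless but unnecessary: $\mathcal T_e(z;u,v)=\mathscr R(d(z))[d(u),d(v)]$ already handles all three components of $z$ at once through $d(z)\in\R^{18}$.
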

\begin{proof}
Equal traces make the boundary functionals in
\eqref{eq:potential-boundary-functional} equal for every deformation.
Their Hessians are therefore equal.
A compatible reference and a corrected reference both equal $K_h$
plus that exact Hessian. The Tet10 class follows from the
opposite-edge faithfulness and affine-potential characterization in
\cref{cor:faithful-jet,prop:exact-gauge-alignment}, applied to $z_e$ in the
first slot of \eqref{eq:common-trilinear-defect}.
\end{proof}

In particular, let $\overline X_h$ be a continuous lower-order geometry
field used as a potential, and suppose
$Z\overline X_h=Z\iota$ on the full external boundary.
If $Z\overline X_h$ is cellwise compatible, its reference form equals
the defect-corrected form from the constant physical coefficient $Z$.
Interior potential changes can alter local densities and their
positivity bounds while leaving the exactly assembled matrix unchanged.
A different boundary trace may give a different reference matrix, with
its matching compensation still required.

A prescribed potential trace may also admit an exact representative
obtained by least-norm correction.
\Cref{app:trace-representative} gives the solvability obstruction,
the complete construction and proof, and its metric-dependent
sensitivity. This changes the local representation of a fixed exact
operator, and neither a uniform repair cost nor a solver benefit follows from the representation alone.

\section{Coercive reference forms and finite-deformation regimes}
\label{sec:coercivity-regimes}

We now prove positivity of the complete reference forms.
Quadratic tensor geometry supplies the mesh- and order-uniform
stress-free result; curved tetrahedral families give finite-deformation
constructions. Every bound retains both condensed-pressure derivatives
from \eqref{eq:complete-pressure-energy}.

For a compatible potential $z_h\in\mathcal Z_h^0$, write
\[
G_{z,h}(y_h)=K_h(y_h)+\mathcal C_h^I[z_h]
           =K_h(y_h)+\mathcal C_h^Q[z_h].
\]
For admissible nodal vectors $u,v$ with interpolants $u_h,v_h$, define
the reference metric $A_{I,h}$ by
\begin{equation}
u^TA_{I,h}v
=\sum_e\mathcal Q_e\!\left[
 \det J_{X,e}\,
 A_1(I_3)[\nabla_Xu_h,\nabla_Xv_h]\right],
\qquad
A_1(I_3)[H,H]=2\|H\|_F^2+\frac23(\tr H)^2.
\label{eq:assembled-identity-metric}
\end{equation}
This is the quadrature-defined physical gradient metric. Its uniform
comparison with the continuous $H^1$ norm is proved below using explicit
shape and Poincar\'e assumptions. Removing constants on one mesh alone
does not supply a uniform bound.

\subsection{Stress-free reference constructions}

At rest, the design question can be answered by direct material
identities and geometric norm comparisons. We first obtain a bound
uniform in mesh size and displacement order on quadratic tensor
geometry. A parameter-domain estimate makes the physical norm
conversion explicit for the examples, and a separate distortion
bound treats quadratic tetrahedra.

\subsubsection{A uniform construction on quadratic tensor geometry}

Let $c_{\Sigma,e}=c_{1,e}+c_{2,e}$ and let $c_*>0$ satisfy $c_*\le c_{\Sigma,e}$ on
every cell. Write $y_{\rm ref}=\iota$ for the identity deformation.
For $H_s=\dev\sym H$, direct stress-free differentiation of
the two modified invariants gives
\begin{equation}
D^2W_{{\rm iso},e}(I_3)[H,H]
 +(2c_*I_3):(H\times H)
 =c_*A_1(I_3)[H,H]+4(c_{\Sigma,e}-c_*)\|H_s\|_F^2.
\label{eq:common-reference-margin}
\end{equation}
Indeed, each modified invariant has stress-free Hessian
$4\|H_s\|_F^2$, and
$I_3:(H\times H)=(\tr H)^2-\tr(H^2)$.
Decomposing $H$ into trace, symmetric trace-free and skew parts gives
the displayed identity. The constitutive derivations in
\Cref{app:material-calculus} give the same formula.

\begin{theorem}[Mesh- and order-uniform reference coercivity]
\label{thm:q2-uniform-coercivity}
Consider conforming tensor cells with parent cube $[-1,1]^3$, reference
maps $X_e\in\mathbb Q_2^3$, and full local variation spaces
$\mathbb Q_{p_e}^3$, $p_e\ge2$, assembled into a conforming global
space. The orders may differ between cells and between meshes.
On cell $e$, coordinate $i$ uses $n_{e,i}\ge p_e+1$ Gauss points,
with the positive tensor-product weights. The chosen global linear
candidate space $\mathcal Z_h$ must contain the physical identity
$\iota(X)=X$; continuous piecewise $\mathbb Q_{p_e}^3$ candidates
with these geometry pullbacks satisfy that requirement.

Assume that, throughout every parent cell,
\[
\|J_{X,e}\|_2\le b h_e,\qquad
\|J_{X,e}^{-1}\|_2\le (a h_e)^{-1},\qquad
\det J_{X,e}>0,
\]
where $h_e>0$ is a cell size and $0<a\le b$ are independent of
mesh size and all cell orders. The admissible conforming variation spaces
satisfy a Poincar\'e inequality with the same constant $C_P$.
Let $c_{1,e},c_{2,e}\ge0$ and choose one coefficient
\[
c_*\ge c_0>0,\qquad c_*\le\min_e(c_{1,e}+c_{2,e}),
\]
with $c_0$ independent of mesh size and all cell orders.
At the stress-free identity deformation and for arbitrary nonnegative
cell bulk moduli, the potential $z_h=2c_*\iota$ belongs to
$\mathcal Z_h^0$ and satisfies
\begin{equation}
G_{z,h}(y_{\rm ref})[u_h,u_h]
 \ge \frac{2c_0}{1+C_P^2}\left(\frac ab\right)^5
        \|u_h\|_{H^1(\Omega_h)}^2.
\label{eq:q2-uniform-coercivity}
\end{equation}
The exact complete-tangent boundary representation of
\Cref{thm:compatible-boundary-gauge} holds.
\end{theorem}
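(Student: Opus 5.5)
We follow the three levels of the construction: compatibility, pointwise material identity, then the conversion of a quadrature-weighted gradient metric into the physical $H^1$ norm.

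\textbf{Compatibility.} The pullbacks of $z_h=2c_*\iota$ are $2c_*X_e\in\mathbb Q_2^3$. On cell $e$ the order is $p_e\ge2$ and the Gauss counts satisfy $n_{e,i}\ge p_e+1$. The sufficiency half of \cref{thm:anisotropic-compatible-kernel}, which applies with equal orders $p_e$, gives $r_{e,i}=\min\{p_e,2(n_{e,i}-p_e)\}\ge2$. Hence $\mathbb Q_2^3\subset\widehat Z_e^{\,0}$ and $z_h\in\mathcal Z_h^0$, since $\iota\in\mathcal Z_h$ by assumption. The boundary representation then follows directly from \cref{thm:compatible-boundary-gauge}.

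\textbf{Pointwise lower bound.} At $y_{\rm ref}$ every $c_e$ vanishes, so $p_e=0$ and the signed term \eqref{eq:pp0-import} is absent. The outer-product term \eqref{eq:qp0-import} is nonnegative for every $\kappa_e\ge0$. Because $R_z=2c_*I_3$ is constant, \cref{thm:dual-pullback} writes the quadrature cofactor Hessian as $\sum_e\mathcal Q_e[\det J_{X,e}(2c_*I_3):(H\times H)]$. Combining this with the material Hessian and using \eqref{eq:common-reference-margin} pointwise at each Gauss point gives
\[
G_{z,h}(y_{\rm ref})[u_h,u_h]\ge c_*\,u^TA_{I,h}u\ge 2c_0\sum_e\mathcal Q_e\bigl[\det J_{X,e}\|\nabla_Xu_h\|_F^2\bigr],
\]
where we drop $4(c_{\Sigma,e}-c_*)\|H_s\|_F^2\ge0$ and the trace term. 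The weights are positive, so all dropped terms are nonnegative.

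\textbf{Quadrature-to-integral comparison.} This is the main obstacle: the quadrature sum must be compared with $\int_{K_e}\|\nabla_Xu_h\|_F^2\,dX$ uniformly in $p_e$ and $h_e$, even though the integrand $\det J_X\|J_uJ_X^{-1}\|_F^2$ is rational and is not integrated exactly. I would strip the geometry pointwise. One has $\|J_uJ_X^{-1}\|_F\ge\|J_u\|_F/\|J_X\|_2\ge\|J_u\|_F/(bh_e)$ and $\det J_X\ge(ah_e)^3$, so the quadrature sum is at least $a^3h_e(bh_e)^{-2}h_e^{2}\,\mathcal Q_e[\|J_u\|_F^2]$, more precisely $(a^3/b^2)h_e\,\mathcal Q_e[\|J_u\|_F^2]$. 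The squared parent gradient has coordinate degree at most $2p_e\le2n_{e,i}-1$, so the rule integrates it exactly: $\mathcal Q_e[\|J_u\|_F^2]=\mathcal I_e[\|J_u\|_F^2]$. This exactness step is where order-uniformity enters, and it needs only $n_{e,i}\ge p_e+1$. Reversing the pointwise bounds, $\|J_u\|_F\ge ah_e\|\nabla_Xu_h\|_F$ and $\det J_X\le(bh_e)^3$, gives
\[
h_e\,\mathcal I_e[\|J_u\|_F^2]\ge \frac{a^2}{b^3}\int_{K_e}\|\nabla_Xu_h\|_F^2\,dX.
\]
The total factor is therefore $(a/b)^5$.

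\textbf{Conclusion.} Summing over cells, the reference form is bounded below by $2c_0(a/b)^5\|\nabla_Xu_h\|_{L^2}^2$. The common Poincar\'e inequality gives $\|u_h\|_{H^1}^2\le(1+C_P^2)\|\nabla_Xu_h\|_{L^2}^2$, which yields \eqref{eq:q2-uniform-coercivity}. Every constant is independent of $h$ and the cell orders.
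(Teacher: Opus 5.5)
Your proposal is correct and follows the paper's proof essentially step for step. It establishes compatibility by degree exactness; citing the sufficiency half of \cref{thm:anisotropic-compatible-kernel} amounts to the paper's direct count $2p_e+1\le 2n_{e,i}-1$. It then uses the pointwise identity \eqref{eq:common-reference-margin} and the nonnegative pressure outer product to get $G_{z,h}\succeq c_*A_{I,h}$, applies the shape factors $(a^3/b^2)h_e$ and $a^2/b^3$ on either side of the exact Gauss integration of $\|D_\xi u_e\|_F^2$, and finishes with the Poincar\'e inequality. The only blemishes are cosmetic: the garbled intermediate factor before ``more precisely'', and the reuse of $p_e$ for both the cell pressure and the polynomial order.
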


\begin{proof}
The pullback $z_e=2c_*X_e$ lies in $\mathbb Q_2^3$ and is globally
continuous. In each coordinate, every monomial in
$J_{z,e}:(J_{u,e}\times J_{v,e})$ has degree at most $2p_e+1$:
the determinant polarization differentiates once in each coordinate
across the three factors. Tensor Gauss with at least $p_e+1$ points
per coordinate is exact for these degrees. Thus the fixed potential
is compatible. Its physical coefficient is $R_z=2c_*I_3$.

Equation \eqref{eq:common-reference-margin}, the nonnegative coefficient
$c_{\Sigma,e}-c_*$ and positive quadrature weights give
$G_{z,h}(y_{\rm ref})\succeq c_*A_{I,h}$.
At $y_{\rm ref}$ every cell-volume change and recovered pressure is
zero. The complete volumetric Hessian therefore adds only a nonnegative
cell-volume outer product, for every stated bulk modulus.

It remains to compare the physical quadrature metric with the continuous
gradient norm uniformly in the cell orders. The whole-cell shape bounds imply
\[
a^3h_e^3\le\det J_{X,e}\le b^3h_e^3,\qquad
\|D_\xi u_eJ_{X,e}^{-1}\|_F
 \ge (b h_e)^{-1}\|D_\xi u_e\|_F.
\]
Consequently
\[
G_{z,h}(y_{\rm ref})[u_h,u_h]
 \ge \sum_e \frac{2c_0a^3}{b^2}\,h_e\,
       \mathcal Q_e[\|D_\xi u_e\|_F^2].
\]
The squared parent gradient has coordinate degree at most $2p_e$,
so its quadrature is exact. On the other hand, the same shape bounds
give
\[
\|\nabla_Xu_h\|_{L^2(K_e)}^2
 \le \frac{b^3}{a^2}\,h_e
       \int_{[-1,1]^3}\|D_\xi u_e\|_F^2\,d\xi.
\]
Combining the two estimates and summing over cells yields
$G_{z,h}[u_h,u_h]\ge2c_0(a/b)^5\|\nabla_Xu_h\|_{L^2}^2$.
Finally, the fixed Poincar\'e inequality gives
$\|u_h\|_{H^1}^2\le(1+C_P^2)\|\nabla_Xu_h\|_{L^2}^2$.
No polynomial-degree-dependent inverse inequality has entered the proof.
\end{proof}

The candidate-space hypothesis is essential: the argument does not
place the identity into an arbitrarily restricted potential class.
The coefficient $c_*$ is common to the mesh, even when the material
sums vary by cell. This preserves continuity of the potential.
For these quadratic tensor geometries the same rule also integrates
the reference determinant exactly, so $V_e^Q=V_e$.
The lower bound is uniform in the nonnegative bulk moduli; it does
not bound the upper spectrum or the condition number.

\subsubsection{A parameter-domain estimate for compatible margins}
\label{sec:parameter-norm-transfer}

For the parameterized examples, a quadrature-point material margin can
be converted into a continuous physical norm bound using explicit geometry
constants. This is a change-of-variables and Poincar\'e estimate
\citep{brezis2011functional}; it does not require an order-dependent
inverse inequality.

\begin{lemma}[Parameter-domain coercivity transfer]
\label{lem:parameter-norm-transfer}
Let $T=(0,1)^3$, and let $X_h:T\to\Omega_h$ be an orientation-preserving
bi-Lipschitz reference map, piecewise continuously differentiable.
Suppose the positive constants $L,M_X,d_-,d_+$ satisfy
\[
 \|D_tX_h\|_2\le L,\qquad
 \|(D_tX_h)^{-1}\|_2\le M_X,\qquad
 0<d_-\le\det D_tX_h\le d_+ .
\]
Let $U=u_h\circ X_h$ be the pullback of an admitted conforming variation, with
$U=0$ on $t_1=0$.
Let $\mathcal Q_h^t$ be the positive parameter-domain form of the
quadrature defining $G_{z,h}$, including the affine parameter-cell measures.
Assume that it
integrates $\|D_tU\|_F^2$ exactly for every such $U$.
For a compatible potential $z_h\in\mathcal Z_h^0$, suppose the material-plus-gauge
density of $G_{z,h}$ has margin at least $\gamma>0$ relative to
$A_1(I_3)$ at every quadrature point. Let all recovered cell pressures
be zero and all bulk moduli be nonnegative.
Define
\[
 C_P=\frac{2L}{\pi}\sqrt{\frac{d_+}{d_-}}.
\]
Then
\begin{equation}
 G_{z,h}[u_h,u_h]\ge
 \frac{2\gamma d_-}
 {L^2d_+M_X^2(1+C_P^2)}
 \|u_h\|_{H^1(\Omega_h)}^2 .
 \label{eq:parameter-h1-transfer}
\end{equation}
For affine tensor parameter cells with pullback degree $p_e$,
tensor Gauss counts at least $p_e+1$ satisfy the squared-gradient
exactness assumption. If $\gamma,L,M_X,d_-,d_+$ are uniform, the
bound is uniform in mesh size and these polynomial orders.
\end{lemma}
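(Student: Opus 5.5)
We follow the structure of the proof of \cref{thm:q2-uniform-coercivity}, replacing the cellwise parent comparison by a single change of variables to the parameter cube $T$.

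\textbf{Pointwise lower bound.} The margin hypothesis, positive weights and nonnegative coefficients give, at every quadrature point,
\[
(\text{material}+\text{gauge density})[H,H]\ge\gamma A_1(I_3)[H,H]\ge2\gamma\|H\|_F^2 .
\]
Here $H=\nabla_Xu_h$ is evaluated at the point. Since every recovered pressure is zero, the signed term $P_{P0,e}$ vanishes. The term $Q_{P0,e}$ is a nonnegative outer product for $\kappa_e\ge0$. Writing the quadrature in parameter form, including the factor $\det D_tX_h$, we get
\[
G_{z,h}[u_h,u_h]\ge2\gamma\,\mathcal Q_h^t\bigl[\det D_tX_h\,\|\nabla_Xu_h\|_F^2\bigr].
\]

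\textbf{Passing to the parameter gradient.} By the chain rule, $D_tU=\nabla_Xu_h\,D_tX_h$. Hence $\|\nabla_Xu_h\|_F\ge\|D_tU\|_F/L$, and together with $\det D_tX_h\ge d_-$ the integrand is at least $(d_-/L^2)\|D_tU\|_F^2$. Positivity of the weights permits this pointwise replacement. The exactness hypothesis then converts the quadrature of $\|D_tU\|_F^2$ into $\int_T\|D_tU\|_F^2\,dt$.

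\textbf{Returning to physical norms.} Converting back uses $\|\nabla_Xu_h\|_F\le M_X\|D_tU\|_F$ and $dX=\det D_tX_h\,dt\le d_+\,dt$. These give
\[
\|\nabla_Xu_h\|_{L^2(\Omega_h)}^2\le M_X^2d_+\int_T\|D_tU\|_F^2\,dt .
\]
Combining the three steps,
\[
G_{z,h}[u_h,u_h]\ge\frac{2\gamma d_-}{L^2M_X^2d_+}\,\|\nabla_Xu_h\|_{L^2}^2 .
\]

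\textbf{Poincar\'e step.} This is the main point to check, namely that the stated $C_P$ is correct. Since $U=0$ on $t_1=0$, the one-dimensional inequality on each segment in $t_1\in(0,1)$ with one zero endpoint gives $\int_T|U|^2\le(2/\pi)^2\int_T|\partial_{t_1}U|^2$, by Fubini and absolute continuity of the $H^1$ pullback. Change variables for $|u_h|^2$ using $d_\pm$:
\[
\|u_h\|_{L^2}^2\le d_+\int_T|U|^2 .
\]
Since $\partial_{t_1}U=\nabla_Xu_h\,\partial_{t_1}X_h$, we have $|\partial_{t_1}U|\le L\|\nabla_Xu_h\|_F$, and $\int_T\|\nabla_Xu_h\|_F^2\,dt\le d_-^{-1}\|\nabla_Xu_h\|_{L^2}^2$. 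Together,
\[
\|u_h\|_{L^2}^2\le\Bigl(\frac{2L}{\pi}\Bigr)^2\frac{d_+}{d_-}\,\|\nabla_Xu_h\|_{L^2}^2=C_P^2\|\nabla_Xu_h\|_{L^2}^2 .
\]
This matches the stated constant. It follows that $\|u_h\|_{H^1}^2\le(1+C_P^2)\|\nabla_Xu_h\|_{L^2}^2$, which yields \eqref{eq:parameter-h1-transfer}.

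\textbf{Final claims.} For affine parameter cells with pullback degree $p_e$, $D_tU$ has coordinate degree at most $p_e$. Its square therefore has coordinate degree at most $2p_e\le2n-1$ when $n\ge p_e+1$, so it is integrated exactly. The constant in \eqref{eq:parameter-h1-transfer} contains only $\gamma,L,M_X,d_\pm$, and no inverse inequality was used. This gives the stated uniformity in mesh size and polynomial order.
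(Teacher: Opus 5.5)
Your proposal is correct and follows essentially the same route as the paper: the pointwise margin with $A_1(I_3)[H,H]\ge2\|H\|_F^2$, the change of variables giving the factor $d_-/L^2$, exact quadrature of $\|D_tU\|_F^2$, the inverse-map bound with $d_+M_X^2$, and the one-endpoint Poincar\'e constant $2/\pi$ along $t_1$. The paper justifies that constant by even reflection about $t_1=1$ and the Dirichlet inequality on $(0,2)$. Your route to $C_P$ through $|\partial_{t_1}U|\le L\|\nabla_Xu_h\|_F$ and $dt\le d_-^{-1}dX$ is the same estimate the paper uses.
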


\begin{proof}
The parameter quadrature includes the affine parameter-cell measures;
changing variables expresses the physical quadrature as
$\mathcal Q_h^t[\det D_tX_h\,(\cdot)\circ X_h]$.
Since $D_tU=(\nabla_Xu_h)\circ X_h\,D_tX_h$ and
$A_1(I_3)[H,H]\ge2\|H\|_F^2$, positivity of the weights gives
\[
 G_{z,h}[u_h,u_h]\ge
 \frac{2\gamma d_-}{L^2}\mathcal Q_h^t[\|D_tU\|_F^2]
 =\frac{2\gamma d_-}{L^2}\int_T\|D_tU\|_F^2.
\]
The pressure-geometric term is zero and the complete pressure outer
term is nonnegative, so neither changes this lower estimate.

The one-dimensional Poincar\'e inequality with one endpoint fixed has
constant $2/\pi$. It follows by even reflection about $t_1=1$ and the
Dirichlet inequality on $(0,2)$.
Applying it along the first coordinate and changing variables yields
\[
 \|u_h\|_{L^2(\Omega_h)}^2
 \le \frac{4d_+}{\pi^2}\|D_tU\|_{L^2(T)}^2
 \le C_P^2\|\nabla_Xu_h\|_{L^2(\Omega_h)}^2.
\]
The inverse-map bound also gives
\[
 \|\nabla_Xu_h\|_{L^2(\Omega_h)}^2
 \le d_+M_X^2\|D_tU\|_{L^2(T)}^2.
\]
Combining these inequalities proves \eqref{eq:parameter-h1-transfer}.
For a tensor polynomial of degree $p_e$ in each coordinate,
every squared parent derivative has coordinatewise degree at most
$2p_e$. The stated Gauss rule integrates it exactly. Affine conversion
to the parameter coordinate preserves this property.
\end{proof}

\subsubsection{A distortion bound for quadratic tetrahedra}

The following specialization uses quadratic tetrahedral maps and the
symmetric four-point rule $\Qfour$, exact through total degree two.
The parent is the standard tetrahedron $\That$, and $\lambda_a$ are
its four barycentric coordinates, as specified in
\Cref{sec:setting}.
The energy is \eqref{eq:complete-pressure-energy} with
$\mathcal Q_e=\Qfour$ and the positive quadrature reference volumes
$V_e^Q=\Qfour[\det J_{X,e}]$. Its recovered pressure is
$\kappa_ec_e(y_h)/V_e^Q$. The potential class for this construction
contains continuous corner-linear fields.

On a cell, the corner interpolant is
$\overline X_e=\sum_{a=0}^3\lambda_a X_{e,a}$, where $X_{e,a}$
are its reference corner positions.
These local interpolants define the continuous corner-linear potential
$\overline X_h$ on the conforming reference mesh.
For a matrix, $\norm{\cdot}_2$ denotes its largest singular value.
Let $\overline J_{X,e}=D_\xi\overline X_e$ and define
\[
\delta_h=\max_{e,q}
\norm{\overline J_{X,e}J_{X,e}(\xi_q)^{-1}-I_3}_2,
\]
where $\xi_q$ are the four quadrature points.
Use the metric $A_{I,h}$ defined in \eqref{eq:assembled-identity-metric},
with $\mathcal Q_e=\Qfour$ on these tetrahedra.

\begin{theorem}[Stress-free compatible core]
\label{thm:compatible-rest-core}
Let $y_{\rm ref}$ be the identity deformation on $\Omega_h$, whose
parent maps are $X_e$. Consider its complete condensed-$P_0$
stored-energy Hessian.
Let the cellwise isochoric Mooney coefficients satisfy
$c_{1,e},c_{2,e}\ge0$ and choose a fixed
$0<c_*\le\min_e(c_{1,e}+c_{2,e})$.
Allow arbitrary nonnegative cell bulk moduli.
For $z_h=2c_*\overline X_h$,
\[
G_{z,h}(y_{\rm ref})\succeq c_*(1-2\delta_h)A_{I,h}.
\]
If $\delta_h<1/2$ and the essential constraints eliminate constant
displacements, then $G_{z,h}(y_{\rm ref})\succ0$.
The boundary identity and nonpositive-index conclusions of
\Cref{thm:compatible-boundary-gauge} hold without a volume-defect
correction.
\end{theorem}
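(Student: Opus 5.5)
The plan is to check three things in order: that $z_h=2c_*\overline X_h$ is compatible, that the pointwise material-plus-gauge density has the stated lower bound, and that this bound survives assembly with the pressure terms. Definiteness and the boundary identity then follow from earlier results.

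\emph{Compatibility.} First, $\overline X_h$ is continuous, since corner interpolants on a conforming tetrahedral mesh agree on shared faces. It belongs to the candidate class by hypothesis. On each cell the pullback $z_e=2c_*\overline X_e$ is parent-affine, so $D_\xi z_e$ is constant. For $u,v\in\mathbb P_2^3$ the integrand $D_\xi z_e:(D_\xi u\times D_\xi v)$ then has total degree at most two, and $\Qfour$ integrates it exactly. Hence $\mathcal T_e(z_e;\cdot,\cdot)=0$ and $z_h\in\mathcal Z_h^0$; this is also the affine case of \cref{prop:exact-gauge-alignment}. Consequently \cref{thm:compatible-boundary-gauge} applies directly, which gives the boundary identity and the nonpositive-index conclusions with no defect correction.

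\emph{Pointwise margin.} At a quadrature point, \eqref{eq:parent-physical} gives
\[
R_z=2c_*\overline J_{X,e}J_{X,e}^{-1}=2c_*(I_3+E_{eq}),\qquad \|E_{eq}\|_2\le\delta_h .
\]
Identity \eqref{eq:common-reference-margin} gives
\[
D^2W_{{\rm iso},e}(I_3)[H,H]+R_z:(H\times H)\ge c_*A_1(I_3)[H,H]+2c_*E_{eq}:(H\times H).
\]
The upper bound in \cref{lem:cofactor-loss-bounds}, namely $|\mathcal C_U[H,H]|\le\|U\|_2\mathcal A[H,H]$ applied with $U=2c_*E_{eq}$, bounds the last term below by $-2c_*\delta_h\mathcal A[H,H]$. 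The density is therefore at least $c_*(1-2\delta_h)A_1(I_3)[H,H]$ for every matrix increment.

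\emph{Assembly and pressures.} Next I will multiply by $\det J_{X,e}(\xi_q)>0$ (the maps preserve orientation) and by the positive weights $1/24$, and sum over cells and points. Since $\nabla_Xu_h$ is evaluated at the same points, this yields the material-plus-gauge bound $c_*(1-2\delta_h)A_{I,h}$. At $y_{\rm ref}$ every $c_e$ vanishes, so every recovered pressure is zero and $P_{P0,e}=0$. What remains is $Q_{P0,e}$, a nonnegative outer product for any $\kappa_e\ge0$. This proves $G_{z,h}(y_{\rm ref})\succeq c_*(1-2\delta_h)A_{I,h}$.

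\emph{Definiteness.} This is the step I expect to need the most care. When $\delta_h<1/2$, it suffices to show that $A_{I,h}$ is definite on the admissible space. If $A_{I,h}[u,u]=0$, then, because $A_1(I_3)[H,H]\ge2\|H\|_F^2$, the physical gradient vanishes at every quadrature point. Since $J_{X,e}$ is invertible there, the parent gradient $D_\xi u_e$ also vanishes at those points. This gradient is componentwise in $\mathbb P_1$, and it vanishes at the four affinely independent $\Qfour$ points, so it is identically zero (the unisolvence noted after \cref{cor:optimal-margin-rigidity}). Thus $u_h$ is constant on each cell. Conformity and connectedness of $\Omega_h$ make it a single global constant, which the essential constraints eliminate. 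Hence $u=0$ and $G_{z,h}(y_{\rm ref})\succ0$.
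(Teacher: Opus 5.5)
Your proof is correct and follows the paper's argument. Its steps match: the stress-free identity \eqref{eq:common-reference-margin}, the split $R_z=2c_*(I_3+E)$ with a cofactor perturbation bounded by $2c_*\delta_h A_1(I_3)$, positive weights with zero recovered pressure, and the argument that a kernel vector of $A_{I,h}$ is cellwise constant. The differences are cosmetic. You get the perturbation bound from the $\|U\|_2$ estimate of \cref{lem:cofactor-loss-bounds}, whereas the paper uses $\|\cof H\|_F^2\le\frac13\|H\|_F^4$ with $\|E\|_F\le\sqrt3\|E\|_2$; both give the same constant. You use four-point unisolvence of affine gradients where the paper uses quadratic exactness of their squared norm. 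You also make the compatibility of $z_h$ explicit, which the paper leaves implicit.
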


\begin{proof}
Write
$H_s=(H+H^T)/2-(\tr H)I_3/3$.
The stress-free identity \eqref{eq:common-reference-margin} gives
\[
D^2W_{{\rm iso},e}(I_3)[H,H]
 +(2c_*I_3){:}(H\times H)
=c_*A_1(I_3)[H,H]
 +4(c_{1,e}+c_{2,e}-c_*)\norm{H_s}_F^2.
\]
At a quadrature point write
$E=\overline J_XJ_X^{-1}-I_3$, so that
$R_z=2c_*(I_3+E)$.
If $\sigma_1,\sigma_2,\sigma_3$ are the singular values of $H$,
then
\[
\norm{\cof H}_F^2
=\sigma_1^2\sigma_2^2+\sigma_2^2\sigma_3^2+\sigma_3^2\sigma_1^2
\le\frac13\norm H_F^4.
\]
Since $\norm E_F\le\sqrt3\norm E_2$ and
$H\times H=2\cof H$, the change of gauge is bounded by
\[
\bigl|(2c_*E){:}(H\times H)\bigr|
\le4c_*\delta_h\norm H_F^2
\le2c_*\delta_h A_1(I_3)[H,H].
\]
The pointwise estimate follows.
At the stress-free state every condensed pressure is zero.
The volumetric second variation is a sum of nonnegative
cell-volume outer products. Positive quadrature weights therefore
give the assembled estimate for every nonnegative bulk modulus.

It remains to show that $A_{I,h}$ is positive on the constrained
space. A zero value forces the physical variation gradient to
vanish at every quadrature point. Invertibility of $J_X$ then
forces the affine parent gradient to vanish at those points.
Quadratic exactness, applied to its squared norm, makes that
parent gradient identically zero. The variation is constant on
each cell, continuity makes it globally constant, and the
essential constraints remove it.
\end{proof}

The distortion condition is sufficient. Its failure leaves positivity
undecided. The theorem supplies a bulk-independent lower bound in the
displayed metric. The largest eigenvalue can grow with the bulk moduli.
The same cofactor perturbation estimate can be applied to
any strictly positive constant-gauge material chart.
For the more general choice $z_h=Z_*\overline X_h$, define
\[
\eta_h(Z_*)=\max_{e,q}
\norm{Z_*(\overline J_{X,e}J_{X,e}(\xi_q)^{-1}-I_3)}_F.
\]
Then
\[
\bigl|(R_z-Z_*){:}(H\times H)\bigr|
\le\frac2{\sqrt3}\eta_h(Z_*)\norm H_F^2
\le\frac2{\sqrt3}\norm{Z_*}_F\delta_h\norm H_F^2.
\]
This gives a coefficient-dependent geometry margin.
Exact alignment makes $\eta_h(Z_*)=0$ even when the reference
curvature is large.
The coefficient and potential remain fixed during differentiation.
Away from zero cell pressure, the pressure-geometric term also
requires its own positivity bound.

\subsection{Certified finite-deformation constructions}

The following families show how the reference construction extends
beyond the stress-free state. They use three different routes:
an exactly aligned constant coefficient, an explicitly defect-corrected
constant coefficient, and a variable compatible potential on fully
curved geometry. Each route has its own stated geometric and material
hypotheses. Their volume-preserving deformation families have zero
recovered pressure, so the complete pressure contribution is
nonnegative. The pressure-dependent limitation is considered afterward.

\subsubsection{A fixed aligned gauge for a nonuniform deformation family}

Write $e_1,e_2,e_3$ for the Cartesian unit vectors and
$E_{12}=e_1\otimes e_2$ for the matrix unit. Set
$D=\operatorname{diag}(2,2,1/4)$ and $F(s)=D+sE_{12}$ for $|s|\le1/4$.
For this and the remaining finite-deformation families, let $W_{\rm MR}$
denote \eqref{eq:mooney-energy} with $c_1=c_2=1$.
Use the fixed coefficient $Z_*=\operatorname{diag}(191/4,191/4,0)$.
The metric $A_1(D)$ is the positive first-minor form \eqref{eq:a1}
evaluated at this fixed matrix $D$.

\begin{proposition}[Uniform shear-family certificate]
\label{prop:aligned-shear-chart}
On this family,
\[
D^2W_{\rm MR}(F(s))+Z_*{:}D^2\cof
\succ\tfrac15 A_1(D).
\]
The same coefficient is exactly aligned with every reference map whose
midside deviations lie in the third coordinate direction.
\end{proposition}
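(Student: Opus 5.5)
The plan is to reduce the certificate to a one-parameter family of explicit $9\times9$ symmetric matrices and verify positive definiteness on $|s|\le1/4$ by exact arithmetic. The second statement, on alignment, is a short algebraic consequence of the Tet10 alignment criterion.

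\emph{Step 1 (simplify the family).} Since $F(s)=D+sE_{12}$ is upper triangular with diagonal $(2,2,1/4)$, one has $J=\det F(s)=1$ for every $s$. Hence all powers $J^{-2/3},J^{-4/3}$ equal one. Then $F^{-1}$ and $C=\cof F$ are polynomial in $s$, with $F^{-1}=D^{-1}-sD^{-1}E_{12}D^{-1}$, and $I_1=\tfrac{65}{16}+4+s^2$ as well as $I_2$ are polynomials in $s$.

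\emph{Step 2 (write the form).} I would write $D^2W_{\rm MR}=D^2\bar I_1+D^2\bar I_2$ using the Flory identity \eqref{eq:first-invariant-import} and the second-minor chain rule of \cref{app:material-calculus}. This gives $A_1(F)+A_2(F)-(\pi_1+\pi_2)D^2J(F)$, where $A_2$ is the positive second-minor analogue.

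By \eqref{eq:det-cross}, $D^2J(F)[H,H]=F:(H\times H)$. The translated form is therefore
\[
A_1(F)+A_2(F)+\bigl(Z_*-(\pi_1+\pi_2)F-\text{(cofactor-slot terms of }A_2)\bigr):(H\times H),
\]
with every entry a polynomial in $s$ of low degree. Subtracting $\tfrac15A_1(D)$, which is a fixed rational matrix because $J(D)=1$, gives $M(s)$, and the goal becomes $M(s)\succ0$ on $[-1/4,1/4]$.

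\emph{Step 3 (certify positivity).} At $s=0$, the diagonal structure of $D$ and $Z_*$ splits the nine entry directions into small blocks. These are the three diagonal entries (coupled through traces and the cross product), together with the off-diagonal pairs $(12,21)$, $(13,31)$ and $(23,32)$, which are coupled by the cofactor polarization. The shear $sE_{12}$ couples some of these blocks.

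I would choose a fixed ordering and compute an $LDL^T$ factorization of $M(s)$ symbolically. Each pivot is then a rational function of $s$. For each pivot I would prove positivity on the interval, either by Sturm sequences on its numerator and denominator or by splitting $[-1/4,1/4]$ into finitely many rational subintervals and bounding each with interval arithmetic.

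The main obstacle is this step: the specific value $191/4$ presumably sits near the edge of feasibility, so crude bounds may fail. I would first try an exact Sylvester check, namely positive leading principal minors as polynomials in $s$, with Sturm counts. Only if those polynomials are unwieldy would I fall back to interval subdivision.

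\emph{Step 4 (alignment).} Suppose $X_e=\overline X_e+e_3\,q(\xi)$, where $q$ collects the quadratic midside deviations. Because $Z_*e_3=0$, we get $Z_*X_e=Z_*\overline X_e$, which is parent-affine. \Cref{prop:exact-gauge-alignment} (equivalently \cref{thm:static-gauge-boundary}) then makes the constant coefficient $Z_*$ exactly compatible; that criterion allows a singular coefficient. This proves the alignment claim.
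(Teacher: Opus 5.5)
Your reduction and your alignment step match the paper's. In the reduction, $\det F(s)=1$ turns the gauge minus $\frac15A_1(D)$ into an explicit degree-four $9\times9$ matrix polynomial on $|s|\le\frac14$. In the alignment step, $Z_*e_3=0$ together with \cref{prop:exact-gauge-alignment} gives the claim. The difference is how you certify positivity on the interval.

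\textbf{The paper's certificate.} It sets $x=2(s+\frac14)\in[0,1]$ and expands the form in the degree-four Bernstein basis $\binom4jx^j(1-x)^{4-j}$. The five constant rational coefficient matrices come from the values at $x=0,\frac14,\frac12,\frac34,1$, and each is checked by unpivoted rational $LDL^T$. Because the Bernstein weights are nonnegative and sum to one, the form at every $s$ is a convex combination of positive definite matrices.

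\textbf{Comparison with your route.} Your parametric Sylvester/$LDL^T$ test with Sturm sequences is exact rather than conservative. For a fixed ordering, positive leading minors on the interval are necessary as well as sufficient, so your test cannot fail when the statement is true. The Bernstein test, by contrast, works only because the five coefficient matrices happen to be positive definite without subdivision or degree elevation. The cost of your route is computation and auditability: you must form minors of degree up to $36$ in $s$ and isolate their real roots. The paper's certificate is just five rational matrices and a short pivot table.

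\textbf{Reducing the work.} If you take your route, you can sharpen your Step~3 block remark.
\begin{itemize}
\item The map $H\mapsto PHP$ with $P=\operatorname{diag}(1,1,-1)$ fixes $F(s)$, $Z_*$ and $A_1(D)$. Hence $M(s)$ splits into $5\times5$ and $4\times4$ blocks for every $s$, not only at $s=0$.
\item The map with $P=\operatorname{diag}(-1,1,1)$ sends $F(s)$ to $F(-s)$, so $[0,\frac14]$ suffices.
\end{itemize}
Both symmetries substantially lower the polynomial degrees you would have to handle.

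\textbf{A correction to Step~2.} By \cref{thm:i2-completion},
\[
D^2\bar I_2=A_C-2\pi_C\,DJ\otimes DJ-\pi_CJ\,D^2J+P_C{:}(H\times L).
\]
So the piece you call a positive second-minor form is really $B_2=A_C-2\pi_C\,DJ\otimes DJ$, which can be indefinite, and the cofactor-slot term enters with coefficient $+P_C$. This does no harm here, since you only use the split to write $M(s)$ entrywise, but you should not infer any positivity from it.
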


\begin{proof}
The determinant of $F(s)$ is one.
The explicit invariant derivatives make its Hessian a matrix
polynomial of degree at most four in $s$.
Put $x=2(s+1/4)$, so $x\in[0,1]$, and express the gauge minus
$A_1(D)/5$ in the Bernstein basis
$\binom4j x^j(1-x)^{4-j}$, $j=0,\ldots,4$.
Its five coefficient matrices are determined by the five rational
values at $x=0,1/4,1/2,3/4,1$.
Symmetric elimination writes each coefficient matrix as
$L\Lambda L^T$, with $L$ unit lower triangular and $\Lambda$ diagonal.
Positive diagonal entries, called pivots, establish positive definiteness.
Unpivoted rational elimination gives the following minimum pivots:
\[
\begin{array}{c|c}
\text{coefficient indices}&\text{smallest pivot}\\ \hline
0,4&1562178416193/2996866827776\\
1,3&490754683601/913773077488\\
2&2606415/4805888.
\end{array}
\]
All nine pivots of every coefficient matrix are positive.
The nonnegative Bernstein weights sum to one, so their convex
combination is positive definite throughout the interval.
The rational coefficient construction and all pivots are retained in
\texttt{aligned-shear.json}, identified in \Cref{sec:certificate-files}.
Finally $Z_*e_3=0$, so
\Cref{prop:exact-gauge-alignment} proves the alignment assertion.
\end{proof}

For the certified shear family, partition the parameter cube
$[0,1]^3$, with coordinate $t=(t_1,t_2,t_3)$, into tetrahedra.
Take $X_1=t_1$, $X_2=t_2$ and a conforming piecewise quadratic
$X_3$ for which the reference map is orientation-preserving and injective.
The current map
$y=DX+\epsilon(X_2-1/2)^2e_1$, $|\epsilon|\le1/4$,
has gradient in \Cref{prop:aligned-shear-chart} and determinant one.
Every condensed pressure value is zero, and its second variation
adds only the positive cell-volume outer product.
When the essential conditions remove global translations, $Z_*$
supplies a positive translated reference for this nonuniform
complete-$P_0$ family at every $\kappa\ge0$.
Indeed, the positive quadrature weights and the pointwise certificate
force a zero-energy variation to have zero gradient at all four
quadrature points. Its parent gradient is affine; those four
affinely independent points determine it, so it vanishes on the
whole cell. Conformity and connectedness then give one global
translation, removed by the essential conditions.
The correction is exact on each admitted curved mesh; it needs no
small-curvature or asymptotically affine hypothesis.

Nonzero pressures add their complete pressure-geometric terms inside
$G_h$. Its positivity then needs another bound or a direct matrix
positivity check. The algebraic boundary identity still holds.
This family supplies a fixed, exactly compensated reference form with a
proved positive-reference regime for nonuniform curved-element states.

\subsubsection{A verified non-aligned curved-mesh family}
\label{sec:nonaligned-certificate}

For the constant coefficient $Z_*$, take the physical potential
$z_*(X)=Z_*X$ and define
\[
G_h(y)=K_h(y)+\mathcal C_h^Q[z_*],\qquad
\Delta_{Z_*,h}=\mathcal C_h^Q[z_*]-\mathcal C_h^I[z_*],\qquad
\widehat G_h(y)=G_h(y)-\Delta_{Z_*,h}.
\]
These are the constant-coefficient specializations of the reference-only
construction in \Cref{thm:reference-corrected-core}. In particular,
$\widehat G_h=K_h+\mathcal C_h^I[z_*]$ preserves the complete original
stored-energy tangent through its exact boundary compensation.

Use the explicit $48$-cell mesh of
\Cref{app:nonaligned-geometry}. Its parameter domain
is $[0,1]^3$, with coordinate $t$.
Only the first reference component is curved;
$X_2=t_2$ and $X_3=t_3$.
The first component has zero boundary perturbation.
Its smallest parent-vertex Jacobian determinant is exactly
\[
\frac{112589990684263}{9007199254740992}>0.
\]
Since the other two components are affine, the
determinant is affine on every cell. Vertex positivity
therefore gives whole-cell orientation.
Monotonicity of $X_1$ on lines parallel to the first
axis, together with the fixed boundary, gives
global injectivity.

Take the same $D=\operatorname{diag}(2,2,1/4)$ and
$Z_*=\operatorname{diag}(191/4,191/4,0)$ as before.
For a real deformation parameter $\epsilon$, define
\[
y_h^\epsilon(X_h(t))=DX_h(t)+\epsilon(t_2-1/2)^2e_1,
\qquad |\epsilon|\le1/4.
\]
This field is quadratic on each cell. Its physical
gradient is $D+sE_{12}$, with
$s=2\epsilon(X_2-1/2)$, and its determinant is one.
Define the fixed reference matrix $A_h$ by assembling
\[
\sum_e Q_4\!\left[
 \det J_{X,e}\,
 A_1(D)[\nabla_Xu_h,\nabla_Xv_h]\right].
\]
Impose zero variations on the face $t_1=0$.
This removes global translations, so $A_h\succ0$
by positivity of the point metric and the
quadratic-element gradient argument used earlier.
There are $300$ free displacement coordinates.

\begin{proposition}[Uniform corrected core on a non-aligned mesh]
\label{prop:nonaligned-uniform-core}
On this explicit mesh, for every
$|\epsilon|\le1/4$ and every $\kappa\ge0$, the
complete condensed-$P_0$ Hessian has the
reference-only boundary representation of
\Cref{thm:reference-corrected-core}, with
$\widehat G_h(y_h^\epsilon)\succ0$.
The reference curvature is not annihilated by $Z_*$.
\end{proposition}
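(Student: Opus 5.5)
The boundary representation needs no new argument. By \Cref{thm:reference-corrected-core}, identity \eqref{eq:defect-corrected-complete-identity} holds for every fixed continuous potential, in particular $z_*=Z_*\iota$, without cellwise compatibility. The matrices $\mathcal C_h^Q[z_*]$, $\mathcal C_h^I[z_*]$ and $\Delta_{Z_*,h}$ depend only on the explicit mesh, the quadrature and $Z_*$. The non-alignment assertion is a finite check through \Cref{prop:exact-gauge-alignment}. The first reference component carries nonzero midside deviations, and $Z_*e_1=(191/4)e_1\ne0$, so $Z_*X_e$ is not parent-affine on the curved cells. A single explicit nonzero entry of $\Delta_{Z_*,h}$, obtained from the opposite-edge representation of \Cref{cor:faithful-jet}, can confirm this.

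For positivity I would split $\widehat G_h(y_h^\epsilon)$ as
\[
\widehat G_h=\bigl(K_h^{\rm iso}+\mathcal C_h^Q[z_*]\bigr)+Q_{P0}+P_{P0}-\Delta_{Z_*,h}.
\]
The deformation has determinant one pointwise, so every $c_e(y_h^\epsilon)$ vanishes and every recovered pressure is zero. Hence $P_{P0}=0$, and $Q_{P0}\succeq0$ for all $\kappa\ge0$; this removes $\kappa$ from the problem. At each of the four $Q_4$ points of every cell, the physical gradient is $D+sE_{12}$ with $s=2\epsilon(X_2-1/2)$. Because $X_2=t_2\in[0,1]$, we have $|s|\le1/4$. \Cref{prop:aligned-shear-chart} therefore gives the pointwise material-plus-gauge density $\succ\frac15A_1(D)$, and this covers all $\epsilon$ at once. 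Positive weights and $\det J_{X,e}>0$ then give
\[
K_h^{\rm iso}+\mathcal C_h^Q[z_*]\succeq\tfrac15A_h
\]
on the $300$ free coordinates, uniformly in $|\epsilon|\le1/4$.

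It then suffices to verify the second hypothesis of \Cref{thm:reference-corrected-core} with $\gamma=1/5$, namely
\[
\tfrac15A_h-\Delta_{Z_*,h}\succ0.
\]
Both matrices are fixed and independent of $\epsilon$ and $\kappa$, so this is one finite matrix inequality. The mesh coordinates are dyadic rationals, as the determinant value shows. The $Q_4$ nodes involve $\sqrt5$, but each entry of $A_h$ and $\Delta_{Z_*,h}$ is a symmetric quadrature sum. It should therefore lie in $\mathbb Q$, or at worst in $\mathbb Q(\sqrt5)$. The entries can be computed exactly, or enclosed in intervals, and an $LDL^T$ factorization can then certify positive pivots, as in the Bernstein certificate. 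Some slack may be needed, either because $\Delta_{Z_*,h}$ is not dominated cellwise or because the margin $1/5$ is tight. In that case I would use the sharper pointwise margin from the Bernstein pivots, or certify the assembled matrix directly with the constraint at $t_1=0$ imposed, since the remark after \Cref{thm:reference-corrected-core} notes that assembly may succeed where cellwise estimates fail.

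The main obstacle is this certified positivity of $\frac15A_h-\Delta_{Z_*,h}$ at size $300$. Exact arithmetic in $\mathbb Q(\sqrt5)$ with growing denominators may be expensive. Interval Cholesky needs enough slack that the rounding enclosures do not straddle zero. If the direct $\gamma=1/5$ test fails, I would first try a Schur-complement split between the curved interior block and the boundary block.
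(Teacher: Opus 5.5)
Your reduction is sound and follows the same skeleton as the paper. \Cref{thm:reference-corrected-core} gives the representation. $\det F=1$ removes $P_{P0}$ and leaves $Q_{P0}\succeq0$ for every $\kappa\ge0$. A pointwise Bernstein certificate yields $K_h^{\mathrm{iso}}+\mathcal C_h^Q[z_*]\succeq\gamma A_h$, and everything then hinges on the single fixed inequality $\gamma A_h-\Delta_{Z_*,h}\succ0$. Your non-alignment argument is also the paper's.

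The gap is the value of $\gamma$. You take $\gamma=1/5$ from \Cref{prop:aligned-shear-chart} on the full range $|s|\le1/4$. Your proof therefore stands or falls with $\frac15A_h-\Delta_{Z_*,h}\succ0$, which you leave unverified and which the paper's data make doubtful. The paper certifies $\frac{23}{100}A_h-\Delta_{Z_*,h}\succ0$, and even that matrix is nearly singular (approximate smallest eigenvalue $0.002738$), so there is no evident room to subtract a further $\frac{3}{100}A_h$.

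Your first fallback cannot supply the slack on the full interval. The observed pointwise margin of $Z_*$ at $s=\pm1/4$, recorded with the aligned coefficient in the supporting observations, is about $0.2241<\frac{23}{100}$. So no certificate valid on all of $|s|\le1/4$ reaches the margin the paper needed. Your other fallbacks, certifying the assembled constrained matrix or splitting it by a Schur complement, test the same inequality and add no slack.

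The missing idea is that $K_h$ evaluates the material Hessian only at the $\Qfour$ points. The pointwise certificate is therefore needed only for the shear values realized there. Every $\Qfour$ point has all barycentric coordinates at least $b_4=(5-\sqrt5)/20$. On each Freudenthal tetrahedron of this mesh, the vertex values of $t_2$ are $t_2^0$ and $t_2^0+1/2$, and both are attained. Hence $X_2=t_2\in[b_4/2,\,1-b_4/2]$ at every quadrature point, and $|s|\le(15+\sqrt5)/80<27/125$ for all $|\epsilon|\le1/4$, with the physical parameter range unchanged.

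On this shorter interval the same Bernstein construction gives the stronger bound $\succ\frac{23}{100}A_1(D)$, with all pivots above $47/100$. The interval congruence certificate for $\frac{23}{100}A_h-\Delta_{Z_*,h}$ then completes the argument. Without this quadrature-resolved restriction, your plan rests on a certificate that is not known to hold.
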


\begin{proof}
Let $b_4=(5-\sqrt5)/20$ be the smaller barycentric
coordinate of $Q_4$. On this parameter mesh,
all quadrature values of $X_2$ lie between
$b_4/2$ and $1-b_4/2$. Hence their shear parameters satisfy
\[
|s|\le\frac{15+\sqrt5}{80}<\frac{27}{125}.
\]
The last inequality follows, for example, from
$\sqrt5<57/25$.

On $|s|\le27/125$, the same rational Bernstein
construction as in \Cref{prop:aligned-shear-chart}
gives the stronger quadrature-resolved bound
\[
D^2W_{\rm MR}(D+sE_{12})+Z_*{:}D^2\cof
\succ\frac{23}{100}A_1(D).
\]
For this interval and margin, all nine rational
elimination pivots of each of the five Bernstein
coefficient matrices exceed $47/100$.
The complete rational elimination data are in the non-aligned
certificate package identified in \Cref{sec:certificate-files}.
The physical parameter interval
$|\epsilon|\le1/4$ is unchanged; this estimate uses
the actual quadrature locations.

It remains to qualify the global geometry defect.
Set
$S_h=(23/100)A_h-\Delta_{Z_*,h}$.
The interval certificate described below encloses
this exact matrix and proves $S_h\succ0$.
At every quadrature point, $\det F=1$, so the
condensed pressure value is zero. Its complete
second variation adds the nonnegative
cell-volume outer product for every $\kappa\ge0$.
Thus $G_h(y_h^\epsilon)\succeq(23/100)A_h$,
and \Cref{thm:reference-corrected-core} applies.
Finally, the nonzero first-component reference
curvature is multiplied by $191/4$ under $Z_*$;
the mesh is non-aligned.
\end{proof}

\subsubsection{Interval positive-definiteness certificate}

Verified numerical methods combine a numerical
approximation with rigorously enclosed residual
or matrix quantities \citep{rump2010verification}.
Here all reference coordinates are the exact
dyadic values specified in \Cref{app:nonaligned-geometry}.
Interval evaluation at $60$ decimal digits encloses
the ideal four-point metric and the centered
cofactor defect. It uses rational operations and
an enclosed $\sqrt5$. The resulting binary64
bounds $S^-\le S_h\le S^+$ hold entrywise and are rounded outward.

Let $L$ be a numerical lower Cholesky factor of the
interval midpoint, and let $Y$ be its computed
lower-triangular inverse, retained in binary64.
Every diagonal entry of $Y$ is nonzero.
Its entries are treated as exact real numbers;
triangularity and nonzero diagonal make it invertible.
Outward-rounded products enclose
$C=YS_hY^T$. If $C^-\le C\le C^+$ entrywise, define
\[
\ell_i=C^-_{ii}
 -\sum_{j\ne i}\max\{|C^-_{ij}|,|C^+_{ij}|\}.
\]
For every vector $v$, symmetry and
$2|v_iv_j|\le v_i^2+v_j^2$ give
$v^TCv\ge\sum_i\ell_i v_i^2$.
All $300$ certified lower bounds satisfy
$\ell_i>0.9999999997$.
Therefore $C\succ0$, and invertibility of $Y$
proves $S_h\succ0$.
The interval bounds and $Y$ are retained in
\texttt{retained\_matrix\_certificate.npz} in the non-aligned package
of \Cref{sec:certificate-files}; its verifier reconstructs every row bound.

Each binary64 interval matrix product uses explicit
multiply, outward rounding to adjacent representable
numbers, addition, and outward rounding.
An induction over the summed products gives the
entrywise inclusion. Exact rational tests check
$480$ independently generated product entries.
A separate conventional assembly differs from
the enclosed matrix midpoint by
$1.269\times10^{-15}$ relatively in Frobenius norm.
Its approximate smallest eigenvalue is $0.002738$;
the sign proof uses the enclosure and congruence above.

\subsubsection{A certified fully curved finite-deformation family}

Use the rational $48$-cell reference mesh specified in
\Cref{app:compatible-geometry}, and let $t\in[0,1]^3$ denote
its construction coordinate. All reference corner positions are
unchanged, so $\overline X_h(X_h(t))=t$.
Take $c_1=c_2=1$ and specify the potential by
$z_h(X_h(t))=4t$. The physical deformation family is
\[
y_h^s(X)=(I_3+sE_{12})X,\qquad X\in\Omega_h,\quad |s|\le1/4.
\]
Here $E_{12}=e_1\otimes e_2$ is the matrix unit.
Every cell has curvature spanning all three physical directions.
Impose zero variations on the face $t_1=0$; this eliminates
constant displacements.

\begin{proposition}[Uniform compatible core beyond the distortion bound]
\label{prop:compatible-uniform-family}
For this reference mesh, all $|s|\le1/4$, and arbitrary nonnegative
cell bulk moduli,
\[
G_{z,h}(y_h^s)\succeq\frac1{16}A_{I,h}\succ0.
\]
The complete original four-point stored-energy Hessian retains the exact boundary
representation of \Cref{thm:compatible-boundary-gauge}.
The geometry has $\delta_h>1/2$.
\end{proposition}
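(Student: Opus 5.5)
The argument has three parts: compatibility of the potential, a pointwise material certificate at the quadrature points, and assembly with the pressure terms. I would then verify $\delta_h>1/2$ directly from the mesh data.

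\textbf{Compatibility.} The pullback of $z_h$ is $z_e=4\overline X_e$, which is parent-affine because $\overline X_h(X_h(t))=t$ and the corner interpolant is affine on each cell. By the Tet10 classification (\cref{cor:compatible-maximality}), $\mathbb P_1^3$ is exactly the local compatible class, so $z_h\in\mathcal Z_h^0$. \Cref{thm:compatible-boundary-gauge} then gives the boundary representation of the complete four-point Hessian.

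\textbf{Pointwise certificate.} At quadrature point $q$ of cell $e$, the physical coefficient is
\[
R_z=4\,\overline J_{X,e}J_{X,e}(\xi_q)^{-1}=4(I_3+E_{eq}).
\]
Since $\det(I_3+sE_{12})=1$, every condensed pressure vanishes, and the volumetric Hessian contributes only nonnegative outer products. It therefore suffices to prove
\[
D^2W_{\rm MR}(I_3+sE_{12})+R_z:D^2\cof\succeq\tfrac1{16}A_1(I_3)
\]
at each of the $48\cdot4$ quadrature points and for all $|s|\le1/4$. For each point I would follow the route of \cref{prop:aligned-shear-chart}:
\begin{itemize}
\item write the gauge minus $\tfrac1{16}A_1(I_3)$ as a degree-four matrix polynomial in $x=2(s+1/4)\in[0,1]$;
\item pass to the Bernstein basis;
\item certify the five coefficient matrices positive definite by rational $L\Lambda L^T$ elimination with positive pivots.
\end{itemize}
The quadrature points involve $\sqrt5$, so $E_{eq}$ is not rational. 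I would either carry exact arithmetic in $\mathbb Q(\sqrt5)$, or enclose $\sqrt5$ and run interval elimination, then absorb the interval radius into the pivot margin. A cheap first filter is the bound $|(R_z-4I_3):(H\times H)|\le\frac{2}{\sqrt3}\|4E\|_F\|H\|_F^2$. It fails whenever $\delta_h>1/2$ at the stress-free state, so it cannot replace the direct check. The genuine curvature must be handled inside the certificate, using the positive margin that the shear states supply.

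\textbf{Assembly.} With positive $Q_4$ weights, summing the pointwise bounds gives $G_{z,h}(y_h^s)\succeq\frac1{16}A_{I,h}$. Strict positivity $A_{I,h}\succ0$ follows from the argument in \cref{thm:compatible-rest-core}:
\begin{itemize}
\item a zero value forces the affine parent gradient to vanish at the four affinely independent points, hence on the whole cell;
\item conformity and connectedness then make the variation a single global constant;
\item the constraint on $t_1=0$ removes that constant.
\end{itemize}
Finally, $\delta_h>1/2$ is verified by evaluating $\|\overline J_XJ_X^{-1}-I_3\|_2$ at a single quadrature point of one cell, with a rational lower bound.

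\textbf{Main obstacle.} The hardest step is the uniform-in-$s$ pointwise certificate at all quadrature points, with an irrational $R_z$ that varies from point to point. If the margin $1/16$ is too tight at some points, I would first try subdividing the $s$-interval and applying Bernstein elimination on each piece before resorting to degree elevation.
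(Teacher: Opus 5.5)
Your plan matches the paper's proof. The parent-affine pullback $4\overline X_e$ is compatible. The degree-four Bernstein expansion on $[-1/4,1/4]$ of the gauge minus $\tfrac1{16}A_1(I_3)$ is certified at all $192$ quadrature points by interval $LDL^T$ pivots; the paper encloses $J_X^{-1}$ and $R_z$ at $60$ digits, and all $8640$ pivots exceed $0.1265$, so no subdivision is needed. The zero-pressure outer products and the gradient argument of \cref{thm:compatible-rest-core} give the assembled bound, and one enclosed witness, $\|Ev\|_2/\|v\|_2>0.598$ at a single point, gives $\delta_h>1/2$.

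Two small remarks. First, the assembly step also needs $\det J_{X,e}>0$ at the quadrature points. The paper takes this from the appendix bound $\|D_tX_h-I_3\|_2^2\le18865/24649<1$. Second, your aside that the Frobenius filter fails whenever $\delta_h>1/2$ is not justified. That filter stays positive at rest while $\|E\|_F<\sqrt3/2$; it is the spectral consequence $c_*(1-2\delta_h)$ that is inconclusive for $\delta_h>1/2$. Nothing in your argument depends on this aside.
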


\begin{proof}
The exact integer geometry check in
\Cref{app:compatible-geometry} gives a global reference-gradient
perturbation bound whose square is $18865/24649<1$.
The reference map is consequently injective and orientation
preserving. The linear map $I_3+sE_{12}$ has determinant one,
so the entire deformation family is admissible and its condensed
pressures vanish.

At a fixed quadrature point, the isochoric Hessian at
$I_3+sE_{12}$ is a matrix polynomial of degree at most four.
The compatible gauge coefficient is independent of $s$.
Subtract $(1/16)A_1(I_3)$ and express the resulting polynomial
in the degree-four Bernstein basis on $[-1/4,1/4]$.
Exact rational arithmetic supplies the material coefficients.
Outward interval arithmetic at $60$ decimal digits encloses
$J_X^{-1}$ and the compatible coefficient at all $192$
quadrature locations. It also encloses the binary64 realization
used in the operator observations.

Each of the five Bernstein matrices at each point has an
interval $LDL^T$ factorization with all nine pivots strictly
positive. There are $8640$ such pivots; every reported lower
endpoint exceeds $0.1265$.
The nonnegative Bernstein weights sum to one, proving the
pointwise lower bound throughout the interval.
The retained coefficient and elimination data are checked by the
authors' verification script \texttt{verify.py}, described in
\Cref{sec:certificate-files}.
The nonnegative condensed-pressure outer products and
\Cref{thm:compatible-rest-core}'s gradient argument give
the assembled conclusion.

Finally, set $E=\overline J_{X,e}J_{X,e}^{-1}-I_3$
at zero-based cell $34$ and local quadrature point $0$ in the compatible
package's stored connectivity and rule ordering. For $v=(-1,1,0)^T$,
the file \texttt{distortion\_witness.json} records an outward enclosure
of $Ev$ and the exact value $\norm v_2^2=2$. Together these give
$\norm{Ev}_2/\norm v_2>0.598$.
Thus $\delta_h>1/2$, even though the direct interval certificate
proves positivity.
\end{proof}

\subsection{Pressure dependence}

The stress-free and finite-shear constructions above have zero recovered
cell pressure. Their complete volumetric Hessian therefore adds a
nonnegative outer product, and the lower bounds hold at every stated
nonnegative bulk modulus. The upper spectrum can still grow with bulk;
no bulk-independent condition number follows.

At a loaded state, the additional term \(p_eD^2c_e\) is signed.
The zero-pressure proofs do not control it.
\Cref{sec:compatible-loaded-evidence} exhibits this limitation with
the complete pressure terms retained.
The boundary identity remains exact; positivity then requires an
additional state-dependent estimate.
The proof objects and verification instructions for all finite
families are specified in \cref{sec:certificate-files}.

\section{Evidence for compatible reference design}
\label{sec:evidence}

The comparisons follow compatible spaces, attainable margins and
complete reference forms. Tensor calculations use an independent
polynomial implementation in binary64, with ideal-rule algebraic
checks distinguished below. Explicit potentials and analytic bounds
stand in place of an uncomputed numerical optimum.
Detailed tetrahedral operator and arithmetic controls are retained
in \cref{app:supporting-observations}.

\subsection{Exact moment checks of the compatible spaces}

An independent monomial implementation forms the scalar defect from
\[
\mathcal L\!\left[
 \det(D_\xi\xi^\alpha,D_\xi\xi^\beta,D_\xi\xi^\gamma)\right]
=\det(\alpha,\beta,\gamma)\,
 \mathcal L[\xi^{\alpha+\beta+\gamma-(1,1,1)}].
\]
Here $\alpha,\beta,\gamma$ are nonnegative integer multi-indices,
$\xi^\alpha=\prod_{j=1}^3\xi_j^{\alpha_j}$, and
$\det(\alpha,\beta,\gamma)$ denotes the determinant of their exponent
rows. If the coefficient is zero, the term is zero; a nonzero term
has nonnegative resulting exponents.

Gauss moments are obtained exactly by taking the polynomial remainder
modulo the Legendre node polynomial and integrating that remainder.
Nonzero pivots modulo the prime $1000000007$, with admissible
denominators, certify rational rank lower bounds. The explicit
compatible spaces give matching upper bounds.

\begin{table}[H]
\centering
\caption{Exact-algebra checks for the scalar potential kernel with full
tensor-product variations. The rank bounds coincide. The theorem
supplies the all-order statement; these finite checks use no
floating-point rank tolerance.}
\label{tab:tensor-kernel-checks}
\begin{tabular}{@{}rrrrr@{}}
\toprule
Degree $p$ & Gauss count $n$ & Candidate dimension & Defect rank & Kernel dimension\\
\midrule
3 & 4 & 64 & 37 & 27\\
4 & 5 & 125 & 98 & 27\\
5 & 6 & 216 & 189 & 27\\
5 & 7 & 216 & 91 & 125\\
6 & 8 & 343 & 218 & 125\\
\bottomrule
\end{tabular}
\end{table}

The $\mathcal S_3$ scalar candidate space has dimension $32$.
The proposed $23$-dimensional kernel has zero modular rank, and its
nine-dimensional complement has nine nonzero modular pivots.
The exact parity proof in \cref{thm:serendipity-compatible-kernel}
establishes the kernel itself.
Anisotropic checks at degree/rule pairs
$(3,1,1)/(4,2,2)$, $(4,1,1)/(5,2,2)$,
$(3,2,1)/(4,3,2)$, and $(5,3,2)/(7,4,3)$
give scalar kernel dimensions $13,13,18,45$.
These include the exceptional single-higher-order-coordinate class.

\subsection{Attainable margins under fixed-candidate enrichment}
\label{sec:enrichment-observations}

Use the checkerboard construction
\eqref{eq:checkerboard-geometry} on a $2\times2\times2$ patch,
with $c_1=c_2=1/2$ and amplitudes $\alpha=1/64,1/32$.
The geometry, Gauss4 point set, and continuous $\mathcal S_3^3$
candidate-potential class are fixed in each local variation-space
comparison. The two explicit potentials are $2\iota$, admitted for
$\mathcal S_3$ variations, and the affine-pullback field
$z(X_h(t))=2t$, admitted after enrichment to $\mathbb Q_3$.

Their classes are determined by the exact kernel theorems.
Tensor nodal arrays evaluate the two explicit polynomial fields and
their physical gradients. The comparison does not assemble or solve
a separate global serendipity equilibrium problem.
The computed material margins and analytic bounds on the
$\mathbb Q_3$ optimum are reported in \cref{tab:enrichment-margins}.

\begin{table}[H]
\centering\small
\caption{Material margins on the checkerboard patch.
The ideal $\mathcal S_3$ optimum is one.
The explicit $\mathbb Q_3$ construction is a feasible candidate,
with its margin evaluated in binary64. The final two columns
bound the optimum over every compatible potential in the same
continuous $\mathcal S_3$ candidate space. The bounds are analytic;
the SDP optimum itself is not computed.}
\label{tab:enrichment-margins}
\begin{tabular}{@{}lrrrr@{}}
\toprule
$\alpha$ & $\mathcal S_3$ identity & $\mathbb Q_3$ explicit
& Optimum lower bound & Optimum upper bound\\
\midrule
$1/64$ & $1$ to rounding & $0.9707486774$ & $0.8906908266$ & $0.9965521842$\\
$1/32$ & $1$ to rounding & $0.9399073749$ & $0.7687423627$ & $0.9934281568$\\
\bottomrule
\end{tabular}
\end{table}

The explicit construction loses approximately $2.93\%$ and $6.01\%$
of the unit material margin. The analytic theorem requires losses
of at least $0.3448\%$ and $0.6572\%$ even at the optimum.
These are distinct statements about one chosen potential and the
entire compatible class. The upper bounds on the possible optimum
loss are conservative, and no sharp constant is inferred from two
amplitudes.

\begin{figure}[!htbp]
\centering
\includegraphics[width=\linewidth]{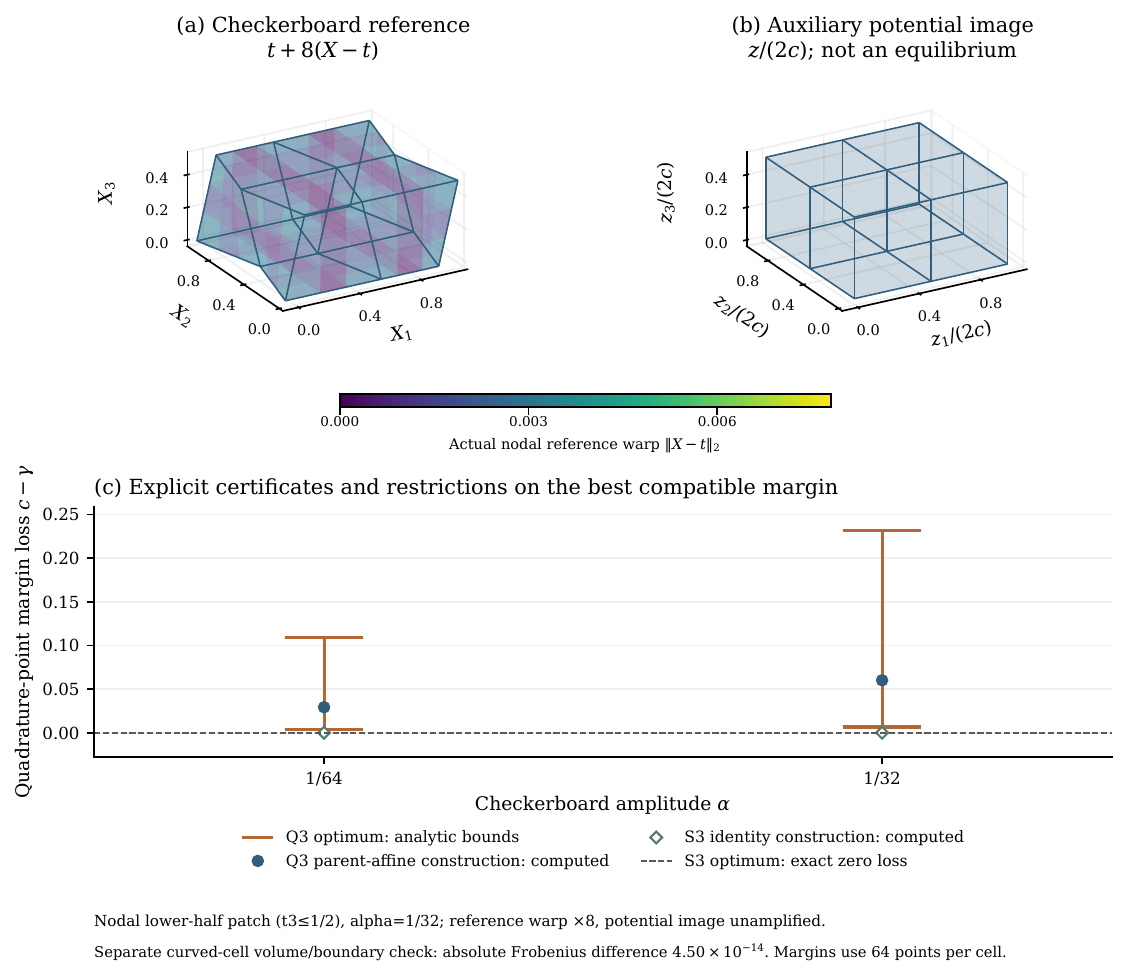}
\caption{Checkerboard geometry and compatible material margins.
At $\alpha=1/32$, panel (a) displays the reference warp magnified as
$t+8(X_h(t)-t)$, with only the lower parameter-half cells shown.
Colours give the actual nodal warp. Panel (b) shows the affine-pullback
potential image $z/(2c)$ without geometric magnification; it is an
auxiliary field. Segments and facets join the stored nodes.
Panel (c) uses both amplitudes: intervals bound the optimal
$\mathbb Q_3$-compatible material-margin loss, and points show
the explicit constructions. The candidate space is the same
continuous $\mathcal S_3$ space throughout.
Here $\gamma=\gamma_h(z)$ and $c=1$. These are finite-point material certificates at the stated quadrature
locations; global spectral and solver-time comparisons belong to a different study.
The separate curved-cell check noted in the graphic refers to the
smooth-reference comparison in \cref{sec:quadratic-reference-observations}.}
\label{fig:enrichment-margin}
\end{figure}

An independent oriented-face action check uses the $\mathbb Q_3$
variation space with zero variations on $t_1=0$. The prescribed test
field in construction coordinates is
\[
v(X_h(t))=
\bigl(t_1,\ t_1t_2(1-t_2),\
             t_1t_3(1-t_3)\bigr)^T.
\]
For the affine-pullback compatible potential, volume and boundary
actions differ by at most $6.78\times10^{-15}$ in Euclidean norm.
The paired interior-face contribution has norm at most
$6.25\times10^{-18}$.
The action comparison tests assembled cancellation for this
specified variation. The loss theorem itself applies to all local
matrix increments.

The amplitude and refinement comparison also includes \(\alpha=1/16\)
and uses \(N=1,2,4\) cells per coordinate. The two \(N=2\) rows
in \cref{tab:enrichment-margins} are reused.
At amplitudes \(1/64,1/32,1/16\), the respective explicit
\(\mathbb Q_3\) margins are approximately
\[
0.97074867735020,\qquad 0.93990737493876,\qquad 0.87290647661997.
\]
Their spreads across \(N\) are
$6.33\times10^{-15}$, $4.55\times10^{-15}$, and $1.02\times10^{-14}$.
At the largest amplitude, the explicit construction loses about
$12.7094\%$ of the unit material margin.
The analytic optimum lies between $0.477012272280111$ and
$0.987984685589522$, so every compatible choice loses at least
$1.20153\%$. No numerical optimum or near-optimality claim is inferred.

The oriented action differences in this comparison are at most
$7.07016\times10^{-15}$, with interior sums at most
$3.14747\times10^{-17}$.
For fixed $N$, these gauge actions are independent of $\alpha$:
both the parent-affine potential $z(X_h(t))=2t$ and the tested
parameter variations are fixed in the pulled-back gauge.
This does not compare complete physical tangents on unchanged domains.
For the explicit candidate, the proof of
\cref{thm:enrichment-certificate-loss} gives the material lower bound
$\gamma=(1-3\delta)/(1-\delta)$.
Thus \cref{lem:parameter-norm-transfer} applies with
$L=1+\delta$, $M_X=(1-\delta)^{-1}$ and
$d_\pm=1\pm3|\alpha|$, where $\delta=\sqrt{11}|\alpha|$.
At $\alpha=1/16$, its continuous-$H^1$ lower coefficient is approximately
$0.151028607$ for the ideal map.
The local margin, this analytic norm bound and an assembled eigenvalue
measure different things.

\FloatBarrier
\subsection{Assembled reference forms and exact compensation}

These comparisons test how a compatible material construction enters
the complete reference form. A smooth quadratic geometry supplies the
order and refinement observations. The subsequent finite-deformation
families examine the three certified constructions and the effect of
nonzero recovered pressure.

\subsubsection{Quadratic tensor geometry with a free boundary}
\label{sec:quadratic-reference-observations}

For $t=(t_1,t_2,t_3)\in[0,1]^3$, take one curved reference cell
\[
X(t)=
\left(t_1,\ t_2,\ t_3+\frac1{32}
           4t_1(1-t_1)\,4t_2(1-t_2)\right)^T.
\]
It has $\det(D_tX)=1$ in these construction coordinates and is globally one-to-one,
since the first two coordinates are unchanged and the third is a
unit-slope translation along each vertical line.
The geometry is in $\mathbb Q_2^3$. Use $\mathbb Q_3^3$
displacements with variations fixed on $t_1=0$, leaving $144$
free displacement coordinates. Four Gauss points per coordinate
define the complete stored energy, with
$c_1=c_2=1/2$ and bulk modulus $10$.
The mechanical state is the identity deformation $y_h(X)=X$,
and the potential is $z_h=2\iota$.

The smallest quadrature-point material margin is
$0.999999999999997$, consistent with the ideal value $c=1$.
Independent volume and oriented-boundary assemblies of its gauge
Hessian differ by $4.4954\times10^{-14}$ in Frobenius norm,
against a volume-gauge norm $10.1421$.
The complete-tangent reconstruction discrepancy is
$4.4951\times10^{-14}$ in the same absolute norm.
The external boundary correction is nonzero; it has norm $10.1421$.
This checks a genuine compensated form with a free boundary.

In the assembled metric $A_{I,h}$ of
\eqref{eq:assembled-identity-metric}, the smallest reference eigenvalue
is $0.9999999999999944$. Its generalized eigenvector is normalized in
that metric, and the absolute Euclidean eigenpair residual is
$5.165\times10^{-15}$.
The reference includes the full positive pressure outer product at rest;
the original complete tangent and its boundary compensation remain
separate matrices. These are numerical observations of the
quadratic-geometry construction in \cref{thm:q2-uniform-coercivity}.

\subsubsection{Displacement order, refinement and the complete pressure term}
\label{sec:tensor-refinement-observations}

Retain the same global quadratic map, material coefficients, potential
$z_h=2\iota$, bulk modulus $10$, and clamp on $t_1=0$.
On one cell, compare displacement orders $p=2,3,4$.
For $p=3$, also subdivide each parameter coordinate into $N=2,4$ cells.
The reference map is restricted from the same smooth function; its
geometry order stays quadratic.
Each case uses $p+1$ Gauss points per coordinate for the complete
material and pressure energy, and $p+2$ for the independent surface
calculation. The preceding one-cell $p=3$ matrices are reused.

For action comparisons, define the two parameter-pullback fields
\[
 v_A(X_h(t))=(t_1,0,0)^T,\qquad
 v_T(X_h(t))=
 \bigl(0,t_1t_2(1-t_2),t_1t_3(1-t_3)\bigr)^T .
\]
We evaluate $v_A,v_T$ and their sum. These are two generating fields,
not three independent test directions.
The refined cases use full matrix-free actions on $882$ and $6084$
free coordinates. No dense global eigensolve is performed there.

\begin{table}[H]
\centering\small
\caption{Smooth-reference compensation across displacement order and
refinement. The matrix rows use an absolute Frobenius difference;
the action rows use the Frobenius norm of the three-column action
difference for $v_A,v_T,v_A+v_T$.
The last column concerns the small $G_{z,h}$ versus $A_{I,h}$ generalized
eigenproblem only.}
\label{tab:tensor-refinement}
\begin{tabular}{@{}rrrrlr@{}}
\toprule
$N$ & $p$ & Free coordinates & Gauge difference & Quantity & Minimum eigenvalue\\
\midrule
1&2&54&$1.91511\,10^{-14}$&Matrix&$1$ to rounding\\
1&3&144&$4.49538\,10^{-14}$&Matrix, reused&$1$ to rounding\\
1&4&300&$1.17030\,10^{-13}$&Matrix&$1$ to rounding\\
2&3&882&$9.89552\,10^{-15}$&Actions&---\\
4&3&6084&$1.00404\,10^{-14}$&Actions&---\\
\bottomrule
\end{tabular}
\end{table}

The $p=2,4$ minimum eigenvalues are
$0.9999999999999961$ and $0.9999999999999879$, with absolute
eigenpair residuals $4.16926\times10^{-15}$ and
$1.50123\times10^{-14}$.
Their eigenvectors have squared $A_{I,h}$ norms within
$7\times10^{-16}$ of one.
The refined oriented interior-face action sums have norms
$1.66163\times10^{-17}$ and $4.92725\times10^{-17}$.
The external correction remains nonzero in every case.
These matrix and action norms use nodal coefficient coordinates;
their variation with $N$ or $p$ is not physical decay or a time comparison.

The local material margin remains one to rounding, but selected
complete-reference Rayleigh values change under refinement.
At this stress-free state the translation identity gives
\[
 G_{z,h}=A_{I,h}+Q_{P0,h},\qquad
 Q_{P0,h}[v,v]
 =\sum_e\frac{\kappa}{V_e^Q}\bigl(Dc_e[v]\bigr)^2.
\]
Here $Q_{P0,h}$ is the sum of the terms in \eqref{eq:qp0-import}.
The pressure values are zero, while their linearized outer contribution
remains present.

For $v_T$, the physical divergence is
$2t_1(1-t_2-t_3)$ because the reference map has determinant one
and its third-coordinate shear does not alter this trace.
Thus $Dc_e[v_T]/V_e$ is its cell average.
On the uniform parameter partition, summing the squared midpoint
averages gives
\begin{equation}
 Q_{P0,h}[v_T,v_T]
 =\frac{2\kappa}{3}
   \left(\frac13-\frac1{12N^2}\right)
   \left(1-\frac1{N^2}\right).
 \label{eq:refined-pressure-energy}
\end{equation}
Indeed, the mean of the squared first-coordinate midpoints is
$1/3-1/(12N^2)$, and the variance of each of the other midpoint
coordinates is $(1-1/N^2)/12$.
The quadrature integrates this cell-volume derivative exactly.
For $\kappa=10$ and $N=1,2,4$, \eqref{eq:refined-pressure-energy}
gives $0,1.5625,2.05078125$.
The observed $p=3$ transverse $A_{I,h}$ value stays
$0.727195767196$, while $G_{z,h}/A_{I,h}$ on that direction changes
$1\to3.148664872\to3.820122644$.
This is a pressure-partition effect with unchanged material margin.

The one-cell order comparison has a separate integration effect.
For $p=2$, the transverse $A_{I,h}$ value is $0.727175925926$,
differing from the Gauss-four/five value by $1.98413\times10^{-5}$.
To identify the term, temporarily write the reference amplitude as
$\alpha$ in $X_h(t)=t+16\alpha t_1(1-t_1)t_2(1-t_2)e_3$.
The only term in the $A_1(I_3)$ density for $v_T$ beyond Gauss-three
exactness is
\[
 512\alpha^2t_1^4(1-t_1)^2(1-2t_2)^2(1-2t_3)^2.
\]
Write $\mathcal G_3$ for three-point Gauss integration on $(0,1)$.
Its first missed moment is
$\int_0^1t^6\,dt-\mathcal G_3[t^6]=1/2800$.
Integrating the transverse factors gives the difference
$32\alpha^2/1575$, which equals $1/50400$ at $\alpha=1/32$.
All other terms are integrated exactly by Gauss three, and Gauss four
integrates the displayed term as well.
Exact integration of the added gauge therefore coexists with a resolved
integration difference in the original material metric.

For the ideal smooth reference map, put $s=4\sqrt2/32$.
The shear has determinant one and satisfies
$\|D_tX_h\|_2,\|(D_tX_h)^{-1}\|_2\le1+s$.
Using $\gamma=1$, $L=M_X=1+s$, and $d_-=d_+=1$ in
\cref{lem:parameter-norm-transfer} gives a continuous-$H^1$ lower
coefficient approximately $0.668011871$, uniform in the selected $h,p$.
Higher-rule $p+3$ evaluations of physical $L^2/H^1$ norms are retained
as numerical observations. They are separate from that analytic
ideal-map certificate. The largest sampled represented-reference
determinant deviation from one is $1.51\times10^{-14}$;
it is not a whole-cell interval enclosure.

\Cref{fig:journal-gauge} places two different quantities side by side.
The checkerboard material margins belong to
\cref{sec:enrichment-observations}; the smooth-reference actions use
the field and pressure partition just defined.
Their distinct behavior illustrates the distinction between a local
material certificate and a complete reference action.

\begin{figure}[H]
\centering
\includegraphics[width=\linewidth]{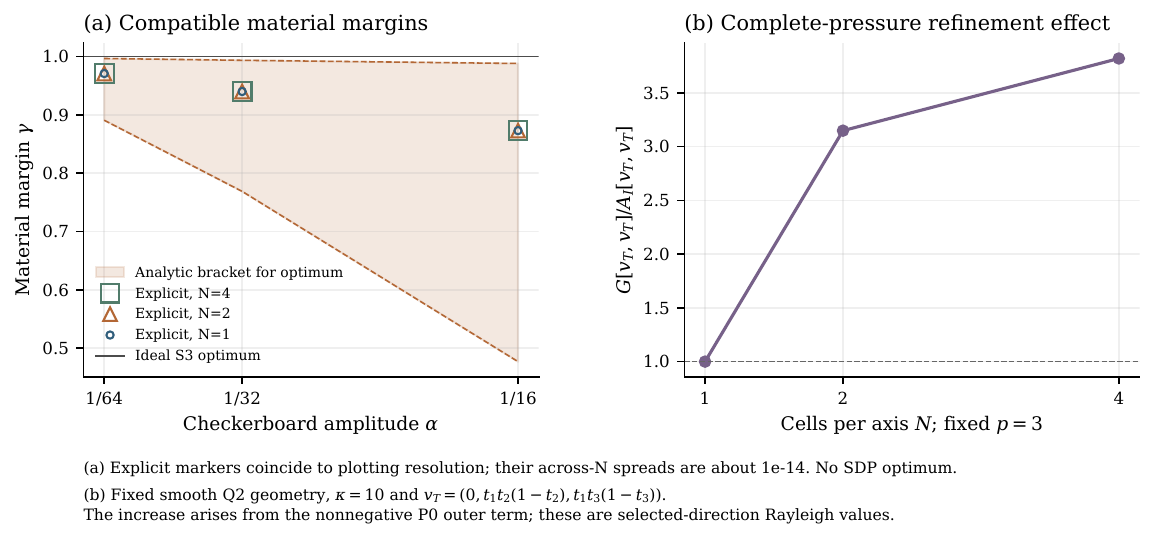}
\caption{Material certificates and complete-reference pressure contributions.
Panel (a) uses the fixed $\mathcal S_3^3$ candidate class:
markers are explicit $\mathbb Q_3$-compatible constructions on $N^3$
cells, and the shaded interval bounds the optimum.
Their overlap illustrates the predicted cell-size-independent local margin.
Panel (b) concerns the separate smooth quadratic map and the field $v_T$
defined in \cref{sec:tensor-refinement-observations}.
Its Rayleigh increase arises from the positive pressure outer contribution
\eqref{eq:refined-pressure-energy}, divided by $A_{I,h}[v_T,v_T]$.
It is not a minimum-eigenvalue curve.}
\label{fig:journal-gauge}
\end{figure}

\FloatBarrier
\subsection{Finite-deformation references and complete equations}
\label{sec:finite-reference-observations}

The tetrahedral comparisons use \(c_1=c_2=1\), hence \(c=2\),
whereas the tensor comparisons use \(c_1=c_2=1/2\).
The reference metrics are those of their respective finite-family
certificates. All complete pressure contributions are retained,
including the small geometric terms caused by rounding in nominally
volume-preserving states.

\subsubsection{Aligned reference and boundary reconstruction}

Use \(D=\operatorname{diag}(2,2,1/4)\) and
\(Z_*=\operatorname{diag}(191/4,191/4,0)\) from
\cref{prop:aligned-shear-chart}.
A parameter cube is split into \(n^3\) cubes and then Freudenthal
tetrahedra; the quadratic reference interpolates
\[
 X(t)=t+\beta\prod_{i=1}^3t_i(1-t_i)e_3,
 \qquad y_h(X)=DX+\epsilon(X_2-1/2)^2e_1.
\]
The \(24\) cases at \(n=2\) use
\(\beta\in\{0,8\}\), \(\epsilon\in\{-1/4,0,1/4\}\),
\(\kappa\in\{10,1000\}\), and either a full boundary clamp or
zero variations on \(t_1=0\).
Four further cases use \(n=4,\beta=8,\epsilon=1/4\), both bulk
moduli and both constraints. Reference orientation and injectivity
follow from the vertex Jacobians and one-component monotonicity,
as detailed in \cref{app:supporting-observations}.

Independent assemblies give the complete \(K_h\), translated \(G_h\)
and oriented-boundary matrix \(B_\Gamma\).
For interior coordinates \(i\) and free trace coordinates \(\Gamma\),
the reference and complete Schur complements are
\[
 S_G=G_{\Gamma\Gamma}-G_{\Gamma i}G_{ii}^{-1}G_{i\Gamma},
 \qquad S_K=S_G-B_\Gamma.
\]
The complete solve uses \(S_K\); when \(S_G\succ0\), the trace pencil
\(S_Kx_\Gamma=\lambda S_Gx_\Gamma\) gives its negative counts.
In \cref{tab:aligned-complete}, reconstruction is
\(\|K_h-G_h+R_\Gamma^TB_\Gamma R_\Gamma\|_F/\|K_h\|_F\),
and solve residual is \(\|K_hx-b\|_2/\|b\|_2\).

\begin{table}[H]
\centering
\caption{Complete-pressure observations for the aligned family.
Rows aggregate the stated cases. Negative counts are numerical,
with the trace pencil used for the partially constrained cases.}
\label{tab:aligned-complete}
\begin{tabular}{@{}llrrrr@{}}
\toprule
$n$ & Constraint & Free / trace & Negative count
& Reconstruction & Solve residual\\
\midrule
2 & Full & $81/0$ & 0 & $\le2.78\,10^{-17}$ & $\le1.31\,10^{-12}$\\
2 & One face & $300/219$ & 23--28 & $\le2.78\,10^{-17}$ & $\le1.31\,10^{-12}$\\
4 & Full & $1029/0$ & 0 & $\le3.32\,10^{-17}$ & $\le5.17\,10^{-12}$\\
4 & One face & $1944/915$ & 80, 109 & $\le3.32\,10^{-17}$ & $\le5.17\,10^{-12}$\\
\bottomrule
\end{tabular}
\end{table}

The clipping reference replaces negative eigenvalues of each
pointwise material Hessian by zero, assembles the resulting Hessians,
and adds the positive cell-volume outer product.
Both references use exact global Cholesky actions in the
minimum-residual (MINRES) method for the same complete equation.
The full-clamp gauge takes one column because
\(G_h=K_h\); the clipping reference takes \(17\).
For the face clamp, the gauge takes \(185\)--\(240\) columns and
reaches relative residual \(10^{-9}\) in nine of twelve cases;
clipping takes \(194\)--\(240\) and reaches it in eleven.
There is no consistent partial-boundary advantage.
The signed boundary Schur solve retains the complete equation.
\Cref{app:supporting-observations} gives the full protocols, the
failed coefficient \(\operatorname{diag}(45,45,0)\), and the eight
nonzero-pressure dilations, whose pointwise and assembled signs
need not coincide.

\subsubsection{Defect-corrected and variable compatible references}

On the certified non-aligned mesh of
\cref{prop:nonaligned-uniform-core}, \(18\) cases use
\(\epsilon=-1/4,0,1/4\), \(\kappa=0,10,1000\), and the full or
one-face clamp. The corrected reference is positive in every
observation, with relative reconstruction at most
\(4.041\times10^{-17}\) and complete solve residual below
\(5.380\times10^{-13}\).
At the face clamp, the complete matrix has \(31,25,20\) negative
eigenvalues at the three bulk moduli. Its sign differs from that of
the reference, as the boundary representation permits.

For the fully curved comparison, use the integer field \(a_h\) of
\cref{app:compatible-geometry} and
\[
 X_h^\alpha(t)=t+(\alpha/471)a_h(t),\qquad
 \alpha\in\{0,1/8,1/4,1/2,7/8\}.
\]
The potential \(z_h=4\overline X_h\) is used at rest and on the
simple-shear family. The stretched-state comparison at
\(D+(1/8)E_{12}\) uses \(z_h=Z_*\overline X_h\).
\Cref{fig:compatible-potential-geometry} distinguishes the observed
pointwise margin from the sufficient distortion bound.

\begin{figure}[H]
\centering
\includegraphics[width=.92\linewidth]{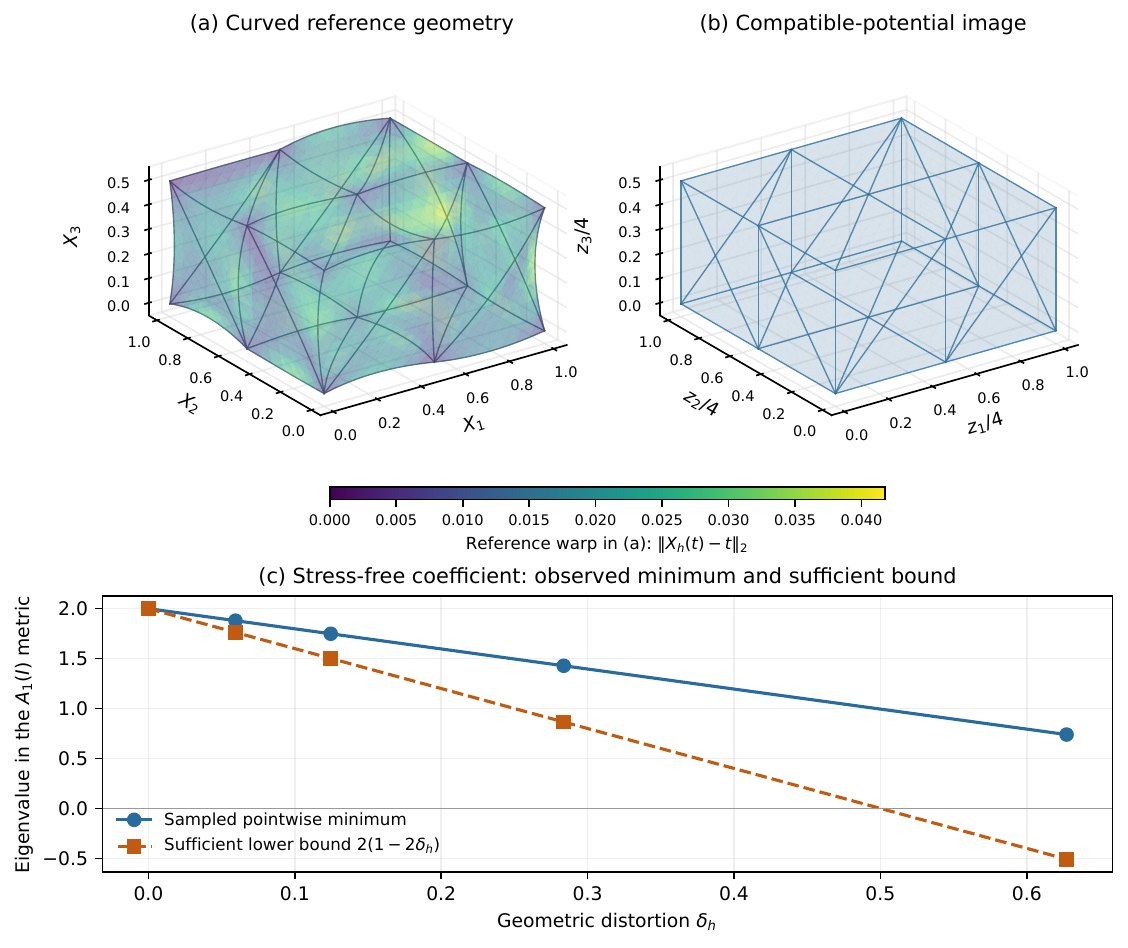}
\caption{Compatible geometry and the stress-free bound.
(a) The $\alpha=7/8$ curved reference at actual scale, with the upper
parameter-half cells omitted; colours give $\|X_h(t)-t\|_2$.
(b) The image of $z_h=4\overline X_h$, divided by four. Its straight
edges describe the coefficient potential on the same connectivity.
(c) The smallest pointwise gauge eigenvalues sampled at the four quadrature
points of every cell, measured in the $A_1(I_3)$ metric, and the sufficient
bound $2(1-2\delta_h)$. A negative bound is inconclusive when the sampled
minimum is positive. The interval-certified family retains its separate
proof scope. These dimensionless views contain no equilibrium deformation
or curvature magnification.}
\label{fig:compatible-potential-geometry}
\end{figure}

The complete \(90\)-case comparison uses physical maps
\(y_h(X)=FX\) with deformation gradients
\(F=I_3\), \(I_3+(1/4)E_{12}\) and \(D+(1/8)E_{12}\),
bulk values \(0,10,1000\), and either the face clamp or a fixed
vertex at \(t=(0,0,0)\).
There are \(88\) positive compatible references. The two failures
occur at \(\alpha=7/8\), zero bulk, and the stretched state with
\(z_h=Z_*\overline X_h\), outside the certified family of
\cref{prop:compatible-uniform-family}.
Their minimum eigenvalues are \(-0.0526\) and \(-0.0533\).
The observed bulk values \(10,1000\) restore positivity through
the volume outer term; the defect-corrected constant references
remain positive in these observations.

For matrix comparisons define
\(d(A;B)=\|A-B\|_F/\max\{1,\|B\|_F\}\).
An additional \(18\)-case interior-curvature control gives matching
potential traces and compatible/corrected operators agreeing to
\(1.31\times10^{-15}\) in \(d\), with the corrected reference as \(B\).
Curved boundary traces can give different gauges: the strongest case
differs by \(0.0775\), normalized by the compatible gauge.
These observations test the trace equivalence and its restrictions.
The complete parameter and arithmetic records remain in
\cref{app:supporting-observations}.

\subsubsection{A nonuniform loaded-state restriction}
\label{sec:compatible-loaded-evidence}

A different $48$-cell family uses the global quadratic reference map
\[
X_h(t)=t+\alpha
\bigl(t_2(1-t_2),t_3(1-t_3),t_1(1-t_1)\bigr)^T,
\qquad \alpha\in\{0,1/4,1/2\}.
\]
The two physical deformations are specified in construction coordinates by
$y_h(X_h(t))=X_h(t)$ and
$y_h(X_h(t))=DX_h(t)+\tfrac14(t_2-1/2)^2e_1$, with the same $D$ as above.
The potential is $4\overline X_h$ at rest and
$Z_*\overline X_h$ in the latter state, where
$Z_*=\operatorname{diag}(191/4,191/4,0)$.
With $c_1=c_2=1$, bulk values $0,10,1000$, and either the full boundary
or the face $t_1=0$ constrained, this gives $36$ complete operators.
Reference and normalized-current gradient bounds certify their
orientation and injectivity. All pressure-geometric contributions are retained.

For $\alpha=1/2$ with one face constrained and the nonuniform current
state, the compatible reference's smallest eigenvalue is $-0.0099687$ at
zero bulk, while the defect-corrected constant reference has minimum
$0.0093759$. Both are positive at bulk $10$. At bulk $1000$, their
minima are $-1.145760$ and $-0.800219$.
For the Euclidean-unit minimum eigenvector of the compatible reference, its
isochoric-plus-gauge term contributes $1.174943$, the positive cell-volume
outer product $0.0048728$, and the pressure-geometric term $-2.325576$.
The sum gives the observed negative eigenvalue, with eigenpair residual
$7.99\times10^{-13}$. Deleting the geometric term would change the energy
Hessian. This example identifies the extra pressure-dependent
control needed to extend the stress-free theorem to a loaded family.

A separate same-trace potential perturbation leaves its exactly
assembled gauge unchanged to the reported arithmetic scale but
changes its four-point approximation. The complete record in
\cref{app:supporting-observations} distinguishes boundary equivalence
from cellwise exactness after an arbitrary potential change.

The supporting complete-pressure and near-kernel arithmetic checks
retain their separate matrix and component scales. They do not turn
a local compatibility identity into a uniform floating-point guarantee.
See \cref{app:supporting-observations}.

\section{Discussion and conclusions}
\label{sec:discussion}
\label{sec:conclusions}

MORTIS gives a discrete design theory for cofactor reference forms:
the exact kernel identifies the available potentials, the coefficient
loss quantifies their attainable material margins, and boundary
compensation preserves the complete tangent.
The tensor, anisotropic, serendipity and Tet10 classifications depend
on the actual variation and potential spaces. Degree counting alone
misses the one-high-axis exception and the mixed serendipity modes.

The optimized margin tests every matrix increment at the prescribed
quadrature points. Attainment concerns the finite evaluation image;
ideal-margin rigidity additionally requires the stated candidate-space
and gradient-sampling assumptions.
Fixed-candidate enrichment can lower the best certificate by linear
order in curvature amplitude, uniformly in cell size.
At the largest reported amplitude, the explicit potential loses
\(12.7094\%\) of its material margin, while every compatible candidate
must lose at least \(1.20153\%\).
The numerical optimum remains uncomputed.
The separate constant-coefficient trade-off is sharp in its specified
basis, norm and pointwise positivity class.

A material certificate, a physical coercivity estimate and a complete
operator spectrum measure different properties. On the smooth tensor
example, pressure refinement changes a selected-direction Rayleigh
value while the material margin is unchanged.
A Gauss-three material-metric difference also coexists with exact
gauge integration. The norm-transfer lemma identifies the additional
geometry and quadrature assumptions required for a physical bound.
An essential constraint removing constants on one mesh does not
alone supply a uniform Poincar\'e constant.

Compatible and defect-corrected potentials give the same exact
assembled gauge when their full potential traces agree. The
prescribed-trace repair changes a local representation of that
operator; its finite-dimensional solvability does not imply a uniform
repair bound or economical computation.
With a full displacement clamp, the compatible reference equals
the complete tangent. The resulting one-column exact-preconditioning
observation is a consequence of this equality.
The partial-boundary MINRES comparisons show no consistent advantage
for the gauge alone. A useful approximation to the signed boundary
Schur action would require separate cost and residual analysis.

The uniform quadratic-tensor theorem and the finite-shear certificates
retain their complete material, geometry, quadrature and boundary
hypotheses. Their recovered pressures vanish, so arbitrary
nonnegative bulk moduli add nonnegative outer terms.
The loaded example has a larger negative pressure-geometric
contribution than its positive material-plus-gauge and volume-outer
contributions. Omitting that term would change the equation.
The exact boundary representation survives; reference positivity
at nonzero pressure needs further control.

Rational and interval certificates prove the specified parameter
ranges; sampled eigenvalues retain their numerical status.
Near-kernel cofactor components can have small absolute error and
large relative action error. Complete-action accuracy therefore
depends on the signed composition and pressure terms
\citep{higham2002accuracy,croci2024mixed}.
These restrictions leave the core distinction explicit:
compatibility selects a representation, material margins certify
local positivity, and the proved coercivity regimes establish when
that choice yields a complete positive reference.

\appendix
\section{Exact Tet10 gauge structure}
\label{app:tet10-gauge-structure}

This appendix gives the complete coordinate derivations for the sharp
tetrahedral instance. They describe a local cubature component before
displacement boundary elimination; global cancellation and pointwise
material margins have their separately stated meanings.

Let
\[
\That=\{\xi\in\R^3:\xi_i\ge0,\ \xi_1+\xi_2+\xi_3\le1\},
\qquad
\lambda_0=1-\xi_1-\xi_2-\xi_3,\quad \lambda_i=\xi_i\ (i=1,2,3).
\]
Write $\mathbb P_r(\That)$ for scalar polynomials of total degree at most
$r$. Tet10 maps and variations belong to $[\mathbb P_2(\That)]^3$.
Their quadratic Lagrange functions are
$N_i=\lambda_i(2\lambda_i-1)$ and $N_{ij}=4\lambda_i\lambda_j$.
The tetrahedral rule $\Qfour$ has weight $1/24$ at each permutation of
\[
(\alpha_4,\beta_4,\beta_4,\beta_4),\qquad
\alpha_4=\frac{5+3\sqrt5}{20},\qquad
\beta_4=\frac{5-\sqrt5}{20}.
\]
It is exact through total degree two \citep{shunn2012symmetric}.
This notation is distinct from tensor polynomial spaces and tensor Gauss
point counts.

For the affine parent Jacobian $J_y=D_\xi y_h$, let $J_{y,a}$ be its
four vertex values and $\overline J_y=\frac14\sum_aJ_{y,a}$. The centered
determinant identity is
\begin{equation}
\Qfour[\det J_y]-\int_{\That}\det J_y
 =c_4\sum_{a=0}^3\det(J_{y,a}-\overline J_y),\qquad
c_4=\frac{3\sqrt5-5}{1800}.
\label{eq:companion-cubic}
\end{equation}
The same identity is used in RUPA \citep{liu2026factor}.
Here is a short derivation. Put $B_a=J_{y,a}-\overline J_y$, so
$\sum_aB_a=0$. All determinant terms containing the mean have spatial
degree at most two and integrate exactly. If $e_3,e_{21},e_{111}$ are
the cubic moment errors for $\lambda_i^3$, $\lambda_i^2\lambda_j$ and
$\lambda_i\lambda_j\lambda_k$ with distinct indices, quadratic exactness
gives $e_{21}=-e_3/3$ and $e_{111}=e_3/3$.
The symmetric determinant expansion with $\sum_aB_a=0$ therefore gives
$(e_3-3e_{21}+2e_{111})\sum_a\det B_a$.
Finally,
$e_3=(\alpha_4^3+3\beta_4^3)/24-1/120$ and
$(8/3)e_3=c_4$, proving \eqref{eq:companion-cubic}.
Polarization connects its Hessian to
$\mathcal T_e(y_h;u_h,v_h)$ in \eqref{eq:common-trilinear-defect}.

\subsection{Current and reference geometry in the quadratic case}

For fixed quadratic variations $u_e,v_e$ and a constant physical
coefficient $R\in\mathbb M$, define
\[
\mathcal E_J(y_e;u_e,v_e)=\mathcal T_e(y_e;u_e,v_e),\qquad
\mathcal E_C(X_e,R;u_e,v_e)=\mathcal T_e(RX_e;u_e,v_e).
\]
The pullbacks \eqref{eq:det-parent} and \eqref{eq:cof-parent} identify
these with the determinant- and constant-coefficient cofactor-curvature
errors, respectively.

\begin{theorem}[Current and reference geometry in the quadratic defect]
\label{thm:dual-crime}
For quadratic maps and variations, both defining integrands have total
degree at most three. The determinant error is linear in the current
parent Jacobian and vanishes when the current map is parent-affine.
The cofactor error is linear in the reference parent Jacobian and
vanishes when the reference map is parent-affine.
For parent-affine maps $\overline y_e,\overline X_e$ and a scalar $s$,
\[
\mathcal E_J(\overline y_e+s(y_e-\overline y_e);u_e,v_e)
=s\mathcal E_J(y_e;u_e,v_e),
\]
\[
\mathcal E_C(\overline X_e+s(X_e-\overline X_e),R;u_e,v_e)
=s\mathcal E_C(X_e,R;u_e,v_e).
\]
\end{theorem}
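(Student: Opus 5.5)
The plan is to reduce everything to the trilinear defect $\mathcal T_e(w;u,v)=\mathcal L_e[D_\xi w:(D_\xi u\times D_\xi v)]$ of \eqref{eq:common-trilinear-defect}, which is symmetric and trilinear in its three fields. By the polarization remarks after \eqref{eq:common-trilinear-defect}, $\mathcal E_J(y_e;u_e,v_e)=\mathcal T_e(y_e;u_e,v_e)$. Likewise $\mathcal E_C(X_e,R;u_e,v_e)=\mathcal T_e(RX_e;u_e,v_e)$. In both errors the first slot carries either the current map or the reference map times $R$. The two statements are therefore the same argument applied to the single slot $w\in\{y_e,RX_e\}$, with $u_e,v_e$ held fixed.

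\textbf{Degree count.} For quadratic $w,u,v$, each parent Jacobian is affine. The integrand $D_\xi w:(D_\xi u\times D_\xi v)$ is a sum of products of three affine entries, so it has total degree at most three. The cross product is bilinear and entrywise quadratic by \eqref{eq:cross-definition}, and it is then contracted with an affine factor.

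\textbf{Linearity and vanishing.} Linearity of each error in $D_\xi w$ is immediate from trilinearity of $\mathcal T_e$. If $w$ is parent-affine, then $D_\xi w$ is constant. The integrand then has degree at most two, because $D_\xi u\times D_\xi v$ is quadratic. Since $\Qfour$ is exact through total degree two, $\mathcal L_e$ annihilates it and the error vanishes. For the reference case, note that $R\overline X_e$ is parent-affine whenever $\overline X_e$ is.

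\textbf{Homotopy formula.} Write $\overline w+s(w-\overline w)$ and expand by linearity in the first slot:
\[
\mathcal T_e(\overline w;u,v)+s\bigl(\mathcal T_e(w;u,v)-\mathcal T_e(\overline w;u,v)\bigr).
\]
Both occurrences of $\mathcal T_e(\overline w;u,v)$ vanish by the affine step, which leaves $s\,\mathcal T_e(w;u,v)$. For the cofactor version, apply this with $w=RX_e$ and $\overline w=R\overline X_e$, using linearity of $X\mapsto RX$.

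\textbf{Main obstacle.} The only point needing care is the exact identification of $\mathcal E_J$ with the first slot of $\mathcal T_e$. The Hessian of $\mathcal L_e[\det D_\xi y]$ in the directions $u,v$ is $\mathcal L_e[D_\xi y:(D_\xi u\times D_\xi v)]$, by \eqref{eq:det-cross} applied at the parent level. This fits cofactor quadraticity, since $D^2\cof$ is independent of $F$ by \eqref{eq:d2cof}; the identity also extends to singular $D_\xi y$ by polynomiality. Once this is granted, the rest is bookkeeping.
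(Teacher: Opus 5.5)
Your proposal is correct and follows essentially the same route as the paper's proof: trilinearity in the three affine parent gradients gives the degree-three bound, and a constant first gradient reduces the integrand to degree two, where $\Qfour$ is exact. Linearity in the first slot then gives both scaling identities; the identification of $\mathcal E_J$ and $\mathcal E_C$ with the first slot of $\mathcal T_e$, which you flag as the main obstacle, is simply how the appendix defines them, so it needs no separate justification.
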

\begin{proof}
Each parent gradient is affine, and the determinant polarization is
trilinear in its three gradient arguments. The integrand consequently
has degree at most three. A constant first gradient leaves a product
of degree at most two, which $\Qfour$ integrates exactly.
Linearity in that gradient proves both scaling identities.
\end{proof}

The result is specific to quadratic variations and the stated rule.
At higher order, an affine first argument need not make the remaining
gradient product exact. A variable constitutive coefficient also keeps
its variation in the original quadrature form; the cofactor dependence
stated above concerns the fixed coefficient $R$.

\subsection{An opposite-edge determinant representation}

For a quadratic vector map $y_h$, let $y_i$ be its vertex values and
$y_{ij}$ its edge-midpoint values. Define the midside deviations by
$d_{ij}(y_h)=y_{ij}-(y_i+y_j)/2$, and write $d_{ij}=d_{ij}(y_h)$. We use the edge order
$(01,12,20,03,13,23)$, with $d_{20}=d_{02}$.
More generally $d_{ji}=d_{ij}$, since the deviations are independent
of edge orientation.
Let $d\in\R^{18}$ collect these six vectors. Define two three-by-three
matrices by listing their columns:
\[
D_+(d)=[d_{01}+d_{23},\,d_{12}+d_{03},\,d_{20}+d_{13}],\qquad
D_-(d)=[d_{01}-d_{23},\,d_{12}-d_{03},\,d_{20}-d_{13}].
\]
Let $\mathscr V(d)$ be the scalar determinant cubature defect
in \eqref{eq:companion-cubic}.
Its Hessian in the Euclidean midside-deviation coordinates is denoted by
$\mathscr R(d)\in\R^{18\times18}$.

\begin{theorem}[Opposite-edge normal form]
\label{thm:opposite-edge-normal-form}
For every quadratic map,
\[
\mathscr V(d)=16c_4\bigl(\det D_-(d)-2\det D_+(d)\bigr).
\]
\end{theorem}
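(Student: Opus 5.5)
The plan is to start from the centered identity \eqref{eq:companion-cubic}, which already gives $\mathscr V(d)=c_4\sum_{a=0}^3\det B_a$ with $B_a=J_{y,a}-\overline J_y$. It then remains to express the four centered vertex Jacobians in terms of the midside deviations and expand the determinants.

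\emph{Step 1 (Jacobians in terms of $d$).} Write the quadratic map as
\[
y_h=\sum_i\lambda_iy_i+\sum_{i<j}4\lambda_i\lambda_jd_{ij}.
\]
This holds because $4\lambda_i\lambda_j$ equals one at the midpoint of edge $ij$ and vanishes at all other nodes. The affine part contributes the same constant Jacobian at every vertex, so it cancels in each $B_a$. Put $g_j=\nabla_\xi\lambda_j$, so that $g_k=e_k$ for $k=1,2,3$ and $g_0=-(e_1+e_2+e_3)$. At vertex $a$, the gradient of $4\lambda_i\lambda_j$ is $4g_j$ if $a=i$, $4g_i$ if $a=j$, and zero otherwise. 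Hence
\[
J_{y,a}-(\text{affine part})=M_a:=4\sum_{j\ne a}d_{aj}\otimes g_j,
\qquad
B_a=M_a-\tfrac14\sum_bM_b .
\]
Written out in columns, $M_0=4[d_{01},d_{02},d_{03}]$ and $M_1=4[-d_{01},\,d_{12}-d_{01},\,d_{13}-d_{01}]$; the matrices $M_2$ and $M_3$ are obtained analogously.

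\emph{Step 2 (reduce to a coefficient comparison).} Both sides of the claimed identity are cubic forms in $d\in\R^{18}$. They are built from determinants of matrices whose columns are linear combinations of the six vectors $d_{ij}$, with scalar coefficients that act only on the edge index. Each side is therefore a linear combination of terms $\det[d_{ij},d_{kl},d_{mn}]$, and both transform by the factor $\det A$ under $d_{ij}\mapsto Ad_{ij}$. Trilinearity of the determinant lets me compare the coefficients of the basis trilinear symbols $\det[d_{\pi},d_{\rho},d_{\sigma}]$ for unordered edge triples, using alternation to fix an ordering.

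For $\sum_a\det B_a$, I expand each $\det B_a=\det(M_a-\overline M)$ using $\sum_a(M_a-\overline M)=0$. An equivalent and cleaner route is the identity
\[
\sum_a\det(M_a-\overline M)=\sum_a\det M_a-\tfrac1{16}\Bigl(\text{mixed terms of }\textstyle\sum_bM_b\Bigr),
\]
obtained by polarizing the cubic $\det$ about the mean. For the right-hand side, I expand $\det D_\pm$ into the eight triple products $\det[d_{01}\pm d_{23},\,d_{12}\pm d_{03},\,d_{20}\pm d_{13}]$.

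\emph{Step 3 (symmetry shortcut).} To avoid checking all $\binom{6}{3}$-type coefficient classes by hand, I would use the tetrahedral symmetry. Vertex permutations act on the edges and preserve the left side up to the sign of the induced parent affine map. They permute the three opposite-edge pairs and flip the signs inside $D_-$, and this should leave $\det D_--2\det D_+$ invariant. It then suffices to check one representative of each orbit of edge triples:
\begin{itemize}
\item three edges at a vertex, such as $01,02,03$;
\item a triangle, such as $01,12,20$;
\item a path;
\item triples containing an opposite pair.
\end{itemize}
For each representative I evaluate both sides on the corresponding basis monomials and read off the constant $16c_4$.

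The main obstacle I expect is exactly this bookkeeping: getting the signs of the induced permutation action on $D_\pm$ right, and correctly expanding the mean correction in $\det(M_a-\overline M)$. If the symmetry argument proves delicate, I would fall back on a direct check. Both sides are cubic polynomials on an 18-dimensional space that are equivariant under $A$, so it is enough to verify the identity for $d_{ij}$ ranging over a three-dimensional space with generic numerical values, choosing enough sample configurations to determine the coefficients of the finitely many trilinear symbols.
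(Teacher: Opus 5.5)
Your route is sound, but it is not the paper's. The paper avoids coefficient matching. It observes that the centered vertex Jacobians factor as $\delta J_{y,a}=D_+(d)A_a+D_-(d)B_a$, with explicit constant $3\times3$ matrices $A_a,B_a$ (the paper's $B_a$, not your centered Jacobians). One application of $\det(U+V)=\det U+\det V+\cof U{:}V+\cof V{:}U$ and cofactor multiplicativity then combine with four identities: $\sum_a\det A_a=-32$, $\sum_a\det B_a=16$, $\sum_aB_a(\cof A_a)^T=0$ and $\sum_aA_a(\cof B_a)^T=0$. Together they give $\sum_a\det\delta J_{y,a}=16\det D_--32\det D_+$ at once. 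That argument explains why only $\det D_\pm$ can appear, and it yields the orthogonal block structure of \cref{cor:faithful-jet} for free. Your approach needs no guessed factorization and reduces everything to a finite, mechanical check, but it verifies rather than explains.

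Cauchy--Binet makes your check short. Write $B_a=\sum_{i<j}d_{ij}\otimes w^{(a)}_{ij}$, where $w^{(a)}_{ij}=3g_j-g_i$ if $a=i$ (symmetrically if $a=j$) and $w^{(a)}_{ij}=-(g_i+g_j)$ if $a\notin\{i,j\}$. Then the coefficient of $\det[d_S]$ on the left is $c_4\sum_a\det[w^{(a)}_S]$, with columns ordered as in $S$. Writing $D_+=\sum_ed_e\otimes p_e$ and $D_-=\sum_ed_e\otimes m_e$, the coefficient on the right is $16c_4(\det[m_S]-2\det[p_S])$. This vanishes unless $S$ contains one edge from each opposite pair. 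In units of $c_4$, the representatives $(01,02,03)$, $(01,12,20)$ and $(01,12,23)$ give $48$, $-16$ and $0$ on both sides.

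Three points need fixing.

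\emph{Equivariance of the right side.} The right side is not invariant. Under a vertex permutation $\sigma$, both $\det D_+$ and $\det D_-$ are multiplied by $\operatorname{sgn}\sigma$, exactly as the left side is. For $D_+$, a vertex transposition induces a transposition of the three opposite pairs, so the two sign characters agree. For $D_-$, the plus edges $01,12,20$ form the face opposite vertex $3$, so any relabeling flips the signs of either zero or two columns.

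\emph{The polarization identity.} Your ``cleaner'' identity is false as a general identity. Already for $M_0=I_3$ and $M_1=M_2=M_3=0$, the left side is $27/64-3/64=3/8$, not $1$. The correct expansion is $\sum_a\det(M_a-\overline M)=\sum_a\det M_a-\overline M{:}\sum_a\cof M_a+8\det\overline M$.

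\emph{Orbits.} There are only three orbits of edge triples: stars (four), triangles (four) and paths (twelve). Every triple containing an opposite pair is a path.
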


\begin{proof}
The affine part of $y_h$ drops out of \eqref{eq:companion-cubic}.
Write $g_a=\nabla_\xi\lambda_a$, so
$g_0=(-1,-1,-1)^T$ and $g_1,g_2,g_3$ are the Cartesian unit vectors.
For the quadratic part $4\sum_{i<j}\lambda_i\lambda_jd_{ij}$, the
centered vertex Jacobian is
\[
\delta J_{y,a}
=4\sum_{j\ne a}d_{aj}\otimes g_j
 -\sum_{i<j}d_{ij}\otimes(g_i+g_j).
\]
Put $P=D_+(d)$ and $M=D_-(d)$. Substitution gives
$\delta J_{y,a}=PA_a+MB_a$, with the following constant matrices:
\[
\renewcommand{\arraystretch}{1.25}
\begin{array}{c|cc}
a&A_a&B_a\\ \hline
0&
\begin{bmatrix}2&0&0\\0&0&2\\0&2&0\end{bmatrix}&
\begin{bmatrix}2&1&1\\-1&-1&-2\\1&2&1\end{bmatrix}\\[5pt]
1&
\begin{bmatrix}-2&-2&-2\\0&2&0\\0&0&2\end{bmatrix}&
\begin{bmatrix}-2&-1&-1\\-1&1&0\\1&0&-1\end{bmatrix}\\[5pt]
2&
\begin{bmatrix}0&0&2\\2&0&0\\-2&-2&-2\end{bmatrix}&
\begin{bmatrix}0&1&-1\\1&-1&0\\-1&-2&-1\end{bmatrix}\\[5pt]
3&
\begin{bmatrix}0&2&0\\-2&-2&-2\\2&0&0\end{bmatrix}&
\begin{bmatrix}0&-1&1\\1&1&2\\-1&0&1\end{bmatrix}.
\end{array}
\]
They satisfy
\[
\sum_a\det A_a=-32,\quad \sum_a\det B_a=16,\quad
\sum_a B_a(\cof A_a)^T=0,\quad
\sum_a A_a(\cof B_a)^T=0.
\]
Use
$\det(U+V)=\det U+\det V+\cof U{:}V+\cof V{:}U$.
Cofactor multiplicativity makes the two mixed sums vanish by the
displayed matrix identities. Therefore
$\sum_a\det(\delta J_{y,a})=-32\det P+16\det M$.
Multiplication by $c_4$ proves the result.
\end{proof}

\subsection{Extension to symmetric quadratic-exact rules}
\label{sec:symmetric-rule-extension}

A cubature rule $Q$ on $\That$ is \emph{fully symmetric} if it is
unchanged by every permutation of the four barycentric coordinates.
It is \emph{quadratic-exact} if it integrates every scalar polynomial
of total degree at most two exactly. Symmetric moment reduction is
classical; the invariant-polynomial basis for tetrahedral rules is
treated systematically by \citet{wang2023symmetric}.
We use its degree-three consequence to identify the rule dependence
of the preceding operator.

For such a rule, define its cubic moment defect and coefficient by
\[
m_3(Q)=Q[\lambda_0^3]-\frac1{120},
\qquad c_Q=\frac83m_3(Q).
\]
The integral of $\lambda_0^3$ on the unit parent tetrahedron is $1/120$.

\begin{proposition}[One rule-dependent cubic coefficient]
\label{prop:symmetric-rule-cubic}
Let $J(\lambda)=\sum_{a=0}^3\lambda_aJ_a$ be any affine
three-by-three matrix field, and put $\bar J=\frac14\sum_aJ_a$.
Every fully symmetric quadratic-exact rule satisfies
\[
Q[\det J]-\int_{\That}\det J
=c_Q\sum_{a=0}^3\det(J_a-\bar J).
\]
Consequently the Tet10 scalar defect has the same opposite-edge
normal form as in \Cref{thm:opposite-edge-normal-form}, with
$c_4$ replaced by $c_Q$.
Moreover, $c_Q=0$ if and only if $Q$ is exact through degree three.
\end{proposition}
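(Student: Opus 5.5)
The plan is to reuse the derivation of \eqref{eq:companion-cubic}, making the rule dependence explicit. Write $J(\lambda)=\bar J+\sum_a\lambda_aB_a$ with $B_a=J_a-\bar J$ and $\sum_aB_a=0$; this holds because $\sum_a\lambda_a=1$. Expand $\det J$ by multilinearity.

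Every term containing at least one factor $\bar J$ is a polynomial of total degree at most two in the barycentric coordinates. Quadratic exactness therefore removes these terms from $Q[\det J]-\int_{\That}\det J$. The remaining cubic part is
\[
\sum_{a,b,c}\lambda_a\lambda_b\lambda_c\,\mathcal D(B_a,B_b,B_c),
\]
where $\mathcal D$ is the symmetric trilinear polarization of the determinant.

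The moment errors of the cubic monomials depend only on the orbit type under full symmetry. Using the notation $e_3,e_{21},e_{111}$ from the derivation above, these are the errors of $\lambda_i^3$, $\lambda_i^2\lambda_j$ and $\lambda_i\lambda_j\lambda_k$.

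Two linear relations follow from quadratic exactness, since $\lambda_i^2\sum_j\lambda_j=\lambda_i^2$ and $\lambda_i\lambda_j\sum_k\lambda_k=\lambda_i\lambda_j$. The first gives $e_3+3e_{21}=0$. The second gives $2e_{21}+2e_{111}=0$. Hence $e_{21}=-e_3/3$ and $e_{111}=e_3/3$, for any fully symmetric quadratic-exact rule, with $e_3=m_3(Q)$.

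Next regroup the symmetric cubic sum using $\sum_aB_a=0$. This is the one step needing care. I would form $\sum_{a,b}\mathcal D(B_a,B_a,B_b)=0$ and $\sum_{b,c}\mathcal D(B_a,B_b,B_c)=0$. These express the mixed-index sums $\sum_{a\ne b}\mathcal D(B_a,B_a,B_b)$ and $\sum_{a,b,c\text{ distinct}}\mathcal D$ in terms of $\sum_a\det B_a$. Substituting them gives the total coefficient $(e_3-3e_{21}+2e_{111})\sum_a\det B_a=\tfrac83e_3\sum_a\det B_a$, which equals $c_Q\sum_a\det B_a$. This is the combinatorial constant already quoted for $Q_4$, and I would verify the factors $-3$ and $2$ by counting ordered index triples.

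The opposite-edge consequence is then immediate. The proof of \Cref{thm:opposite-edge-normal-form} used only the sum $\sum_a\det(\delta J_{y,a})$ times the rule constant, so replacing $c_4$ by $c_Q$ gives $\mathscr V(d)=16c_Q(\det D_--2\det D_+)$.

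For the final equivalence, first suppose $Q$ is exact through degree three. Then $e_3=0$, so $c_Q=0$. Conversely, suppose $c_Q=0$. Then $e_3=0$, and the relations above force $e_{21}=e_{111}=0$. Every cubic monomial in $\lambda_1,\lambda_2,\lambda_3$ is one of these types, so together with quadratic exactness the rule is exact on $\mathbb P_3$.

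The main obstacle I expect is the bookkeeping in the regrouping step, specifically making sure the coefficient is exactly $e_3-3e_{21}+2e_{111}$ for general symmetric rules and not an artifact of the particular nodes of $Q_4$.
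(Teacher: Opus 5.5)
Your proof is correct, but it takes a different route from the paper's proof of this proposition. What you do is the explicit moment computation that the paper uses only to derive \eqref{eq:companion-cubic} for $Q_4$. You observe, correctly, that nothing in that derivation uses more than full symmetry and quadratic exactness.

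The bookkeeping you flag does close. Ordered triples of type $(a,a,b)$ with $a\ne b$ occur with multiplicity three, and $\sum_{a\ne b}\mathcal D(B_a,B_a,B_b)=-\sum_a\det B_a$. Summing your second relation over $a$ shows that the ordered all-distinct sum equals $2\sum_a\det B_a$. Together these give the coefficient $e_3-3e_{21}+2e_{111}=\tfrac83 e_3$ for every such rule.

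The paper's proof avoids this computation by an invariance argument:
\begin{itemize}
\item $\mathcal L_Q$ is invariant under permutations of the barycentric coordinates and annihilates quadratics.
\item Modulo quadratics, the symmetric cubics on the barycentric hyperplane form a one-dimensional space, because $\sum_a\lambda_a=1$ there.
\item Hence $\mathcal L_Q$ is a multiple of the invariant functional $\mathcal N(f)=\sum_a f_3(v_a-\bar v)$, with the multiple fixed by $\mathcal N(\lambda_0^3)=3/8$.
\item The determinant formula then follows because the cubic Taylor part of $\det J$ at $v_a-\bar v$ is $\det(J_a-\bar J)$.
\end{itemize}

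Each route has its advantages. The paper's argument needs no index counting. It proves the stronger statement $\mathcal L_Q=c_Q\mathcal N$ for every cubic $f$, not only determinants of affine fields, and it gives the degree-three exactness equivalence in one line. Your argument is entirely elementary and makes the relations $e_{21}=-e_3/3$ and $e_{111}=e_3/3$ explicit. It proves the converse directly: every cubic monomial in $\xi$ belongs to one of the three orbit types.
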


\begin{proof}
Let $\mathcal L_Q$ denote cubature minus exact integration.
It is invariant under the permutation group and annihilates
quadratic polynomials. Symmetrizing a polynomial therefore
leaves its $\mathcal L_Q$ value unchanged.
Modulo polynomials of degree at most two, the symmetric cubic
space is one-dimensional: the elementary symmetric polynomials
have degrees one, two, three, and four, and the degree-one
polynomial $\sum_a\lambda_a$ is the constant one on $\That$.

For a cubic polynomial $f$ on the barycentric hyperplane
$\sum_a\lambda_a=1$, let $f_3$ be the homogeneous
degree-three part of its Taylor expansion about the barycenter.
Let $v_a$ be the four barycentric vertex vectors and
$\bar v=(1/4,1/4,1/4,1/4)$.
The functional $\mathcal N(f)=\sum_a f_3(v_a-\bar v)$
is also invariant and annihilates quadratics. For
$f=\lambda_0^3$, it equals
$(3/4)^3+3(-1/4)^3=3/8$, so it is nonzero.
The one-dimensional quotient gives
$\mathcal L_Q=c_Q\mathcal N$ on cubics.
For $f=\det J$, its homogeneous cubic part at $v_a-\bar v$
is $\det(J_a-\bar J)$, proving the formula.
The same quotient proves the final assertion.
\end{proof}

When $c_Q\ne0$, the local faithfulness and alignment results below,
and the corresponding constant-gauge defect comparison, transfer to this rule.
Norm formulas use $|c_Q|$ in place of $c_4$; a negative
coefficient reverses the local signs. When $c_Q=0$, these
cubic defects vanish for every geometry, so an alignment
restriction is unnecessary for these components.
This concerns the constant-coefficient minor terms.
The complete material energy and its rational pullbacks
have additional quadrature dependence.
Changing $Q$ in the complete energy defines a different
discrete equation.

\subsection{Norm, rank, and sign of the four-point defect}

For a three-by-three matrix $A$, let $\mathsf D(A)$ denote the
nine-by-nine matrix of $D^2\det(A)$ in a Frobenius-orthonormal entry basis.
Vectorizing $D_+(d)/\sqrt2$ and $D_-(d)/\sqrt2$ defines an orthogonal
change of the eighteen coordinates. Denote that change by $T$.

\begin{corollary}[Faithfulness and an exact norm]
\label{cor:faithful-jet}
The Hessian has the orthogonal block representation
\[
T\mathscr R(d)T^T
=\operatorname{diag}\bigl(-64c_4\mathsf D(D_+(d)),
                          32c_4\mathsf D(D_-(d))\bigr).
\]
Consequently
\begin{equation}
\|\mathscr R(d)\|_F^2
=4096c_4^2\bigl(4\|D_+(d)\|_F^2+\|D_-(d)\|_F^2\bigr).
\label{eq:jet-exact-norm}
\end{equation}
In particular,
\[
64\sqrt2\,c_4\|d\|_2
\le\|\mathscr R(d)\|_F
\le128\sqrt2\,c_4\|d\|_2 .
\]
The map $d\mapsto\mathscr R(d)$ is injective. Its singular values,
for these domain and matrix norms, are $64\sqrt2c_4$ and
$128\sqrt2c_4$, each with multiplicity nine.
\end{corollary}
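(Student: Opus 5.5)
The block representation follows from \Cref{thm:opposite-edge-normal-form} by polarization. Write $\mathscr V(d)=16c_4(\det D_-(d)-2\det D_+(d))$. The maps $d\mapsto D_\pm(d)$ are linear, so the Hessian of $\det D_\pm(d)$ in $d$ is $\mathsf D(D_\pm(d))$ pulled back through the linear map $d\mapsto \operatorname{vec}D_\pm(d)$. Each column of $D_\pm$ combines one opposite-edge pair $(d_{ij},d_{kl})$ as $d_{ij}\pm d_{kl}$. The 18 coordinates split into three such pairs of vectors, so $d\mapsto(\operatorname{vec}D_+/\sqrt2,\operatorname{vec}D_-/\sqrt2)$ is exactly the orthogonal $T$, built from $2\times2$ Hadamard blocks scaled by $1/\sqrt2$.

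I would set $p=\operatorname{vec}D_+/\sqrt2$ and $m=\operatorname{vec}D_-/\sqrt2$. Then $\det D_+=\det(\sqrt2\,P)=2\sqrt2\det P$ and $\det D_-=2\sqrt2\det M$. Differentiating twice in the orthonormal coordinates $(p,m)$, the factor $2\sqrt2$ combines with the $(\sqrt2)^{-1}$ chain factor per derivative to give
\[
D^2_p(2\sqrt2\det)=\sqrt2\,\mathsf D(\cdot)\ \text{at }D_+ .
\]
The bookkeeping here needs care. Homogeneity of degree one of $\mathsf D$ helps: $\mathsf D(\sqrt2 P)=\sqrt2\,\mathsf D(P)$, so $D^2_p[\det(\sqrt2 P)]=2\,\mathsf D(\sqrt2P)/\sqrt2\cdot$\dots. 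I would simply verify that the net block is $c\,\mathsf D(D_\pm(d))$ with $16c_4\cdot 2\cdot 2=64c_4$ for the $+$ block and $16c_4\cdot2=32c_4$ for the $-$ block, with signs $-$ and $+$ respectively. The cross blocks vanish because $\mathscr V$ is a sum of a function of $p$ and a function of $m$.

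For the norm, orthogonal invariance of $\|\cdot\|_F$ reduces it to $\|\mathsf D(A)\|_F^2$ for a general $A$. Since $D^2\det(A)[H,L]=A:(H\times L)$ by \eqref{eq:det-cross} with $F$ replaced by $A$ (a polynomial identity), $\mathsf D(A)$ is linear in $A$. Each entry $A_{ij}$ appears via the pairs of matrix units used in \Cref{lem:cofactor-loss-bounds}, where $E\times E'=\pm E_{ij}$. Counting, each $A_{ij}$ occurs in exactly two unordered off-diagonal pairs (four matrix entries) with coefficient $\pm1$, and these positions are disjoint across $(i,j)$. Hence $\|\mathsf D(A)\|_F^2=4\|A\|_F^2$. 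I expect this count to be the main thing to double-check, say by the $E_{23}\times E_{32}=-E_{11}$ and $E_{22}\times E_{33}=E_{11}$-type pairs; in the cofactor expansion each entry of $\cof$ is a $2\times2$ minor with two monomials. The result is $\|\mathscr R\|_F^2=64^2c_4^2\cdot4\|D_+\|_F^2+32^2c_4^2\cdot4\|D_-\|_F^2$, which is \eqref{eq:jet-exact-norm}.

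The linear map $d\mapsto\mathscr R(d)$ then has the squared norm $4096c_4^2(4\|D_+\|_F^2+\|D_-\|_F^2)$ as a quadratic form in $d$. Using $\|D_\pm\|_F^2=2\|p\|^2,2\|m\|^2$ and $\|d\|^2=\|p\|^2+\|m\|^2$, this form is diagonal in the orthonormal $(p,m)$ coordinates with values $32768c_4^2$ on $p$ and $8192c_4^2$ on $m$. Its singular values are therefore $128\sqrt2c_4$ and $64\sqrt2c_4$, each of multiplicity nine. This gives the two-sided bound and injectivity, since $c_4>0$.
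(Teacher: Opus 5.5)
Your proposal is correct and follows essentially the same route as the paper: the orthogonal opposite-edge sum/difference coordinates give the block form, the entry count $\|\mathsf D(A)\|_F^2=4\|A\|_F^2$ gives the norm identity, and $\|D_\pm(d)\|_F^2=2\|p\|_2^2,\,2\|m\|_2^2$ gives the two nine-fold singular values and injectivity. The one thing to fix is your intermediate display: since $D_+(d)=\sqrt2P$, each $p$-derivative contributes a factor $\sqrt2$, so the Hessian of $\det D_+(d)$ in $p$ is $2\,\mathsf D(D_+(d))$, not $\sqrt2\,\mathsf D(\cdot)$ — and $2\,\mathsf D(D_+(d))$ is exactly the factor your final constants $64c_4$ and $32c_4$ already use.
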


\begin{proof}
Differentiate \Cref{thm:opposite-edge-normal-form} in the orthogonal
coordinates. Each opposite-edge sum or difference is $\sqrt2$ times
its new coordinate; the two derivative factors give the displayed
blocks. In an entry basis, every entry of $A$ occurs in four signed
entries of $\mathsf D(A)$, and different entries have disjoint supports.
Thus $\|\mathsf D(A)\|_F^2=4\|A\|_F^2$, proving
\eqref{eq:jet-exact-norm}.
The opposite-edge identity
$\|D_+(d)\|_F^2+\|D_-(d)\|_F^2=2\|d\|_2^2$
gives the bounds. The sum and difference subspaces each have dimension
nine; the norm identity gives the two singular values on them.
\end{proof}

These norms concern the stated quotient coordinates. A physical energy
norm requires the element gradient metric. The two-to-one singular-value
ratio concerns the geometry-to-matrix map. The action of an individual
matrix on directions near its kernel can still be sensitive. Relative
action errors then require the magnitude of that action as well as the
matrix norm \citep{higham2002accuracy}.

The determinant-Hessian calculation uses standard constitutive spectral
algebra \citep{poya2023variational,poya2025generalised}.
The normal form identifies its precise occurrence in the Tet10 cubature
operator.

For a real symmetric matrix $S$, let
$\In(S)=(n_+(S),n_-(S),n_0(S))$ count its positive, negative and zero
eigenvalues, including multiplicity.

\begin{corollary}[Local rank and sign]
\label{cor:jet-inertia}
The inertia of $\mathscr R(d)$ is the sum of the inertias of
$-\mathsf D(D_+(d))$ and $\mathsf D(D_-(d))$.
For any $A\in\mathbb M$, the determinant Hessian has inertia
\[
\begin{array}{c|c}
\text{condition on }A&\In(\mathsf D(A))\\ \hline
\operatorname{rank}A=0&(0,0,9)\\
\operatorname{rank}A=1&(2,2,5)\\
\operatorname{rank}A=2&(3,3,3)\\
\det A>0&(4,5,0)\\
\det A<0&(5,4,0).
\end{array}
\]
Every nonzero local cubic defect Hessian is indefinite.
\end{corollary}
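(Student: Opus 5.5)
The first assertion is immediate from \Cref{cor:faithful-jet}. The orthogonal congruence $T$ preserves inertia, and so does multiplication of a block by the positive scalars $64c_4$ and $32c_4$; note $c_4>0$ since $3\sqrt5>5$. Hence $\In(\mathscr R(d))=\In(-\mathsf D(D_+))+\In(\mathsf D(D_-))$. The remaining work is the table for $\In(\mathsf D(A))$, i.e.\ the inertia of the quadratic form $H\mapsto 2\,A{:}\cof H=A{:}(H\times H)$ by \eqref{eq:det-cross}.

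To compute it, I reduce $A$ by a singular value decomposition $A=P\Sigma Q^T$ with $P,Q\in SO(3)$ and $\Sigma=\operatorname{diag}(\sigma_1,\sigma_2,\pm\sigma_3)$, the sign of the last entry matching $\operatorname{sign}\det A$. By cofactor multiplicativity, $\cof(P^THQ)=P^T(\cof H)Q$. The substitution $H\mapsto P^THQ$ is Frobenius-orthogonal, so $\In(\mathsf D(A))=\In(\mathsf D(\Sigma))$. For diagonal $\Sigma=\operatorname{diag}(s_1,s_2,s_3)$ the form is $2\sum_k s_k\cof(H)_{kk}$. Here $\cof(H)_{11}=h_{22}h_{33}-h_{23}h_{32}$, and cyclically for the other indices.

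This form splits into orthogonal blocks. The diagonal block in $(h_{11},h_{22},h_{33})$ is the symmetric matrix with zero diagonal and off-diagonal entries $s_3,s_2,s_1$ in positions $(1,2),(1,3),(2,3)$. There are also three $2\times2$ blocks, on $(h_{23},h_{32})$, $(h_{13},h_{31})$ and $(h_{12},h_{21})$, of the form $-s_k\begin{pmatrix}0&1\\1&0\end{pmatrix}$.

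Each $2\times2$ block contributes $(1,1,0)$ if $s_k\ne0$ and $(0,0,2)$ otherwise. The $3\times3$ block $M$ has trace zero and $\det M=2s_1s_2s_3$.
\begin{itemize}
\item If $\det A>0$, the eigenvalues of $M$ multiply to a positive number and sum to zero, so $M$ has inertia $(1,2,0)$. The total is $(4,5,0)$.
\item If $\det A<0$, the same reasoning gives $(2,1,0)$ for $M$, hence $(5,4,0)$ overall.
\item If $\operatorname{rank}A=2$, take $s_3=0$ and $s_1,s_2\neq0$. Then $M$ has eigenvalues $0,\pm\sqrt{s_1^2+s_2^2}$, and exactly two $2\times2$ blocks are nondegenerate. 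The total is $(3,3,3)$.
\item If $\operatorname{rank}A=1$, take $s_1\neq0$ only. Then $M$ contributes $(1,1,1)$ and one $2\times2$ block contributes $(1,1,0)$. The total is $(2,2,5)$.
\item If $A=0$, the form vanishes and the inertia is $(0,0,9)$.
\end{itemize}

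For the final assertion, suppose $\mathscr R(d)\ne0$. Then $(D_+,D_-)\neq(0,0)$ by injectivity in \Cref{cor:faithful-jet}. By the table, every nonzero $A$ gives $n_\pm(\mathsf D(A))\ge2$; a sign flip of $\mathsf D$ merely swaps $n_+$ and $n_-$. So whichever of $D_+(d)$ or $D_-(d)$ is nonzero already supplies a block with both positive and negative eigenvalues, and the sum inertia has $n_+,n_-\ge2$.

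The main obstacle is the bookkeeping that isolates the decoupled blocks and their signs in the diagonal case. In particular I need the correct sign of the entries $-h_{ij}h_{ji}$, and I need to confirm that the reduction only ever uses rotations, which preserve determinant signs.
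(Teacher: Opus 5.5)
Your proof is correct, and it reaches the table by a different route from the paper's.

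The paper treats the invertible case by substituting $H=AX$. This turns $D^2\det(A)[H,H]$ into
$\det A\,\bigl((\tr X)^2-\tr(X^2)\bigr)=\det A\bigl(\tfrac23(\tr X)^2+\|X_{\rm skew}\|_F^2-\|X_{\rm d}\|_F^2\bigr)$,
so the $(4,5,0)$ split is read off as trace-plus-skew versus symmetric trace-free directions. For the singular rows it uses general invertible left and right changes to reach $\operatorname{diag}(1,1,0)$ or $\operatorname{diag}(1,0,0)$. That reduction introduces a scalar factor $\det M\det N$ of either sign, which is harmless only because those counts are balanced.

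Your single $SO(3)$ singular-value reduction instead treats all five rows uniformly. Because the congruence is orthogonal, it gives the actual spectrum of $\mathsf D(A)$, not just its inertia:
\begin{itemize}
\item the values $\pm s_k$ from the three hyperbolic blocks;
\item the roots of $\lambda^3-\|A\|_F^2\lambda-2\det A=0$ from the zero-diagonal block.
\end{itemize}
This is consistent with the identity $\|\mathsf D(A)\|_F^2=4\|A\|_F^2$ used in the faithfulness corollary. The price is exactly the care you took. With merely orthogonal factors, a reflection flips the sign of the form, so the determinant-sign rows genuinely need $P,Q\in SO(3)$. The paper's route, in exchange, exhibits the positive and negative subspaces intrinsically, as $A$ times trace/skew matrices and $A$ times symmetric trace-free matrices.

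Two small points of precision:
\begin{itemize}
\item \eqref{eq:det-cross} is stated for $F\in\GLp$, but it is a polynomial identity. It therefore holds for the singular and negative-determinant $A$ you need.
\item The implication $\mathscr R(d)\ne0\Rightarrow(D_+(d),D_-(d))\ne(0,0)$ follows directly from the block representation, since both blocks vanish when $D_\pm(d)=0$. It does not come from injectivity, which is the converse direction.
\end{itemize}
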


\begin{proof}
The first assertion follows from the orthogonal block representation
and $c_4>0$. If $A$ is invertible, write an increment as $H=AX$.
Then
$D^2\det(A)[H,H]=\det A((\tr X)^2-\tr(X^2))$.
The trace direction and three skew directions are positive, and the
five symmetric trace-free directions are negative, when $\det A>0$.
The signs reverse when $\det A<0$.
Invertible changes of left and right coordinates reduce a rank-two
matrix to $\operatorname{diag}(1,1,0)$ and a rank-one matrix to
$\operatorname{diag}(1,0,0)$, up to a nonzero scalar factor.
Direct expansion gives three positive and three negative directions
in the first case and two of each in the second; the remaining
directions vanish. A scalar sign cannot change these balanced counts.
Rank zero gives the zero matrix.
If $d\ne0$, at least one of $D_+(d),D_-(d)$ is nonzero.
Its determinant-Hessian block has both signs.
\end{proof}

Local indefiniteness supplies no universal sign conclusion for
a constrained assembled correction; assembly can remove local directions.

\subsection{Exactness of a geometry-aligned translation}

Let $d_e(X)$ denote the six reference-map midside deviations on one
element, and let
$\mathcal C_X=\operatorname{span}\{d_{ij}(X):i<j\}\subset\R^3$.
For a constant physical cofactor coefficient $Z$, write
$\mathsf E_X(Z)$ for the eighteen-coordinate matrix of
$\mathcal T_e(ZX_e;\cdot,\cdot)$, with the tetrahedral four-point
error functional. The potential pullback is $ZX_e$, whose parent
gradient is $ZJ_{X,e}$.
Here $Zd_e(X)$ means that $Z$ acts on each of the six vectors.

\begin{proposition}[Necessary and sufficient alignment]
\label{prop:exact-gauge-alignment}
The following conditions are equivalent:
\[
\mathsf E_X(Z)=0,\qquad
Zd_e(X)=0,\qquad
Z\mathcal C_X=\{0\},\qquad
ZX_h\text{ is parent-affine}.
\]
\end{proposition}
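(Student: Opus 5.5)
The plan is to prove the chain of equivalences
\[
\mathsf E_X(Z)=0\iff Zd_e(X)=0\iff Z\mathcal C_X=\{0\}\iff ZX_h\ \text{parent-affine},
\]
by reducing everything to the opposite-edge normal form and the faithfulness result \Cref{cor:faithful-jet}. The last three conditions are elementary. By definition, $\mathcal C_X$ is the span of the six midside deviations, so $Zd_e(X)=0$ says exactly that $Z$ annihilates each spanning vector, i.e.\ $Z\mathcal C_X=\{0\}$. Next, midside deviations are linear in the map, so $d_e(ZX_e)=Zd_e(X)$. A Tet10 quadratic map is parent-affine exactly when all six of its midside deviations vanish, since its quadratic part is $4\sum_{i<j}\lambda_i\lambda_jd_{ij}$ and the six products $\lambda_i\lambda_j$ are linearly independent modulo affine functions. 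Together these give the last two equivalences.

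The substantive step is $\mathsf E_X(Z)=0\iff Zd_e(X)=0$. The definition \eqref{eq:common-trilinear-defect} shows that $\mathsf E_X(Z)$ is the matrix of $\mathcal T_e(w;\cdot,\cdot)$ with potential $w=ZX_e\in[\mathbb P_2]^3$ in the first slot. By symmetry and trilinearity of $\mathcal T_e$, this is the mixed polarization of the scalar cubic functional $y\mapsto\mathcal L_e[\det D_\xi y]$, namely its third derivative evaluated in direction $w$. I would therefore use \Cref{thm:opposite-edge-normal-form}, which expresses that functional as the cubic $\mathscr V(d(y))$ in the midside-deviation coordinates only. Affine parts drop out, so affine variations and affine potential components contribute nothing. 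Polarizing once in the direction $w$ gives, in midside coordinates, the bilinear form $D^3\mathscr V[d(w),\cdot,\cdot]$. Since $\mathscr V$ is a homogeneous cubic, its Hessian $\mathscr R(d)$ at the point $d$ equals $D^3\mathscr V[d,\cdot,\cdot]$. Hence $\mathsf E_X(Z)$ equals, up to the constant $1/2$ or $1$ and up to padding by zero rows for the affine variation coordinates, $\mathscr R(d(w))=\mathscr R(Zd_e(X))$. Injectivity of $d\mapsto\mathscr R(d)$ in \Cref{cor:faithful-jet} then gives $\mathsf E_X(Z)=0\iff Zd_e(X)=0$. The converse direction is also immediate from \Cref{thm:dual-crime}: the cofactor error is linear in the reference Jacobian and vanishes for parent-affine potentials.

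I expect the main obstacle to be the bookkeeping in this identification. One must check carefully that the defect on the full 30-dimensional local variation space restricts to the 18 midside coordinates, with zero entries on the affine (vertex-mean) directions. The reason is that any determinant term containing an affine factor has degree at most two and is integrated exactly. One must also check that the eighteen-coordinate matrix $\mathsf E_X(Z)$ is the same object as the Hessian of $\mathscr V$ evaluated at the point $Zd_e(X)$. The only fact needed there is that a symmetric trilinear form coming from a cubic satisfies $\tfrac12\,\mathrm{Hess}\,\mathscr V(d)=D^3\mathscr V[d,\cdot,\cdot]/2$, so any nonzero constant factor is harmless for the vanishing statement.
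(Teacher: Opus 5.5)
Your proposal is correct and follows the paper's proof. The paper likewise identifies $\mathsf E_X(Z)=\mathscr R(Zd_e(X))$ through the common polarized determinant functional, invokes the injectivity in \Cref{cor:faithful-jet}, and treats the span and parent-affine equivalences as immediate from the midside-deviation description. Your hedge on the constant is unnecessary: since $\mathcal T_e(y;\cdot,\cdot)$ is by definition the Hessian of $\mathcal L_e[\det D_\xi y]$, the identification holds with factor exactly one.
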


\begin{proof}
The common polarized functional gives
$\mathsf E_X(Z)=\mathscr R(Zd_e(X))$.
\Cref{cor:faithful-jet} makes its vanishing equivalent to
$Zd_e(X)=0$. The span condition is the same statement.
A quadratic map is affine precisely when all its midside deviations
vanish, giving the last equivalence.
\end{proof}

Thus the determinant tangent is exact under the four-point rule for
every local pair of displacement variations precisely when its current
quadratic map is affine. Exact scalar volume at a curved map is a
weaker property.
Let $W_{\rm iso}$ be any twice-differentiable isochoric density.
A full three-dimensional space $\mathcal C_X$
also obstructs a positive exactly aligned gauge at every deformation:
alignment forces $Z=0$, while scale invariance gives
$D^2W_{\rm iso}(F)[F,F]=0$.

\section{First- and second-minor material calculus}
\label{app:material-calculus}

The identities here close the constitutive derivation used by the reference
forms. Fix $F\in\GLp$ and write
$J=\det F$, $C=\cof F$, $I_1=F{:}F$, $I_2=C{:}C$.
For increments $H,L$, put $\theta_H=F^{-T}{:}H$ and define $\theta_L$
similarly. The form $Q_F$, its rescaling $A_1$ and the coefficient $\pi_1$
were defined in \eqref{eq:qf}--\eqref{eq:pi1}.

\subsection{The first-minor completed square}

The form $Q_F$ is positive definite: if its sum of squares in
\eqref{eq:qf} vanishes, $I_1>0$ forces $\theta_H=0$, and its remaining
square forces $H=0$. Symmetry and bilinearity are immediate.
To derive \eqref{eq:first-invariant-import}, differentiation of
$\bar I_1=J^{-2/3}I_1$ gives
\[
D\bar I_1(F)[H]
 =2J^{-2/3}\left(F{:}H-\frac{I_1}{3}\theta_H\right).
\]
A second differentiation, using
$D\theta_H(F)[L]=-\tr(F^{-1}LF^{-1}H)$, yields
\begin{align*}
D^2\bar I_1(F)[H,L]
={}&2J^{-2/3}H{:}L
-\frac43J^{-2/3}\bigl(\theta_LF{:}H+\theta_HF{:}L\bigr)\\
&+\frac{4I_1}{9}J^{-2/3}\theta_H\theta_L
+\frac{2I_1}{3}J^{-2/3}\tr(F^{-1}HF^{-1}L).
\end{align*}
Expanding $Q_F$ and substituting \eqref{eq:det-second} rewrites this as
$A_1(F)[H,L]-\pi_1(F)D^2J(F)[H,L]$, as claimed.
This is the first-minor Flory identity also used by the companion
RUPA analysis \citep{liu2026factor}; its derivation is included here.

\subsection{The cofactor chain rule}

Write $K_F(H)=D\cof(F)[H]$ as in \eqref{eq:dcof}.
The modified second invariant can be read as a first-invariant Flory function
of the cofactor:
\begin{equation}
\bar I_2(F)
=(\det C)^{-2/3}C{:}C,
\qquad
\det C=J^2.
\label{eq:i2-as-flory}
\end{equation}
This observation permits reuse of the completed square in
\eqref{eq:qf}, followed by an exact cofactor chain rule.

For $M,N\in\R^{3\times3}$, define
\begin{align}
\vartheta_M&=C^{-T}{:}M,\\
\vartheta_N&=C^{-T}{:}N,\\
Q_C(M,N)
&=\left(M-\frac23\vartheta_MC\right){:}
\left(N-\frac23\vartheta_NC\right)
+\frac{I_2}{9}\vartheta_M\vartheta_N.
\label{eq:qc}
\end{align}
The same sum-of-squares proof as for $Q_F$ shows that $Q_C$ is positive
definite.  Introduce
\begin{align}
A_C(F)[H,L]
&=2J^{-4/3}Q_C(K_F(H),K_F(L)),
\label{eq:ac}\\
\pi_C(F)&=\frac{2I_2}{3J^{10/3}},
\label{eq:pic}\\
P_C(F)&=2J^{-4/3}
\left(C-\frac{I_2}{3}C^{-T}\right).
\label{eq:pc}
\end{align}

\begin{theorem}[Completed second-invariant Hessian]
\label{thm:i2-completion}
For every $F\in\GLp$ and $H,L\in\R^{3\times3}$,
\begin{align}
D^2\bar I_2(F)[H,L]
={}&A_C(F)[H,L]
-2\pi_C(F)DJ(F)[H]DJ(F)[L]\nonumber\\
&-\pi_C(F)J D^2J(F)[H,L]
+P_C(F){:}(H\times L).
\label{eq:i2-completion}
\end{align}
The form $A_C$ is positive definite on $\R^{3\times3}$.
\end{theorem}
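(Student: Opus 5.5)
The plan is to treat $\bar I_2=\Phi\circ\cof$, where $\Phi(C)=(\det C)^{-2/3}C{:}C$ is the first-invariant Flory function evaluated at $C$. This is legitimate because \eqref{eq:i2-as-flory} holds and $C=\cof F\in\GLp$ (since $\det C=J^2>0$). Applying \eqref{eq:first-invariant-import} with $C$ in place of $F$ gives
\[
D^2\Phi(C)[M,N]=2(\det C)^{-1/3}Q_C(M,N)-\frac{2I_2}{3(\det C)^{5/6}}D^2\det(C)[M,N].
\]
With $\det C=J^2$, the prefactors become $2J^{-2/3}$ and $2I_2/(3J^{5/3})$. The chain rule for a composition then reads
\[
D^2\bar I_2(F)[H,L]=D^2\Phi(C)[K_F H,K_F L]+D\Phi(C)\bigl[D^2\cof(F)[H,L]\bigr].
\]
By \eqref{eq:d2cof}, the last argument is $H\times L$. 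The first derivative $D\Phi(C)[M]=2(\det C)^{-1/3}(C{:}M-\tfrac{I_2}{3}\vartheta_M)$ pairs $M$ with $2J^{-2/3}(C-\tfrac{I_2}{3}C^{-T})$.

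At this point I see a discrepancy: my exponents ($J^{-2/3}$) do not match the stated $J^{-4/3}$ in \eqref{eq:ac} and \eqref{eq:pc}. I would recheck this first. The modified invariant \eqref{eq:modified-invariants} is $J^{-4/3}I_2$, while $(\det C)^{-2/3}=J^{-4/3}$. So the correct first-minor formula at $C$ must use the $\det C$ powers literally: $(\det C)^{-2/3}$ as prefactor of $Q_C$, and $\pi_1(C)=2I_2/(3(\det C)^{5/3})=2I_2/(3J^{10/3})$. With these powers the constants in \eqref{eq:ac}, \eqref{eq:pic} and \eqref{eq:pc} match exactly. This yields the $A_C$ term and the $P_C{:}(H\times L)$ term directly.

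The remaining step, which I expect to be the main bookkeeping obstacle, is to rewrite $-\pi_C\,D^2\det(C)[K_FH,K_FL]$ in terms of $F$. Since $\det C=J^2$ as functions of $F$, I will differentiate $J(F)^2=\det(\cof F)$ twice. This gives
\[
2DJ[H]\,DJ[L]+2J\,D^2J[H,L]=D^2\det(C)[K_FH,K_FL]+D\det(C)[H\times L].
\]
I then need to handle the extra term $D\det(C)[H\times L]=\det C\,(C^{-T}{:}(H\times L))=J^2\,J^{-1}F{:}(H\times L)$, using $C^{-T}=F/J$. By \eqref{eq:det-cross}, this equals $J\,D^2J[H,L]$. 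Hence
\[
D^2\det(C)[K_FH,K_FL]=2DJ[H]\,DJ[L]+J\,D^2J[H,L].
\]
Multiplying by $-\pi_C$ gives precisely the two middle terms of \eqref{eq:i2-completion}.

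Positive definiteness of $A_C$ then follows from two facts: $Q_C$ is positive definite (by the same sum-of-squares argument as for $Q_F$), and $K_F$ is invertible (\Cref{lem:cofactor-derivatives}). Together these make $H\mapsto Q_C(K_FH,K_FH)$ vanish only at $H=0$.
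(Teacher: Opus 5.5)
Your proposal is correct and follows the paper's proof essentially step for step. You write $\bar I_2=\phi\circ\cof$ and apply the first-minor Flory identity at $C\in\GLp$. In your corrected second pass, the powers of $\det C=J^2$ reproduce $A_C$, $\pi_C$ and $P_C$ exactly; the first displayed prefactors $(\det C)^{-1/3}$ and $(\det C)^{-5/6}$ were simply wrong and should be deleted. You then use the second-order chain rule with $D^2\cof(F)[H,L]=H\times L$, and you eliminate $D^2\det(C)[K_FH,K_FL]$ by twice differentiating $\det(\cof F)=J^2$, using $\cof C=JF$ and \eqref{eq:det-cross}. Your positive-definiteness argument, combining positive definiteness of $Q_C$ with invertibility of $K_F$, is also exactly the paper's.
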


\begin{proof}
Define $\phi(C)=(\det C)^{-2/3}C{:}C$.  Applying the first-invariant
identity \eqref{eq:first-invariant-import} with $C$ as the matrix variable
gives
\begin{equation}
D^2\phi(C)[M,N]
=2J^{-4/3}Q_C(M,N)-\pi_C D^2\det(C)[M,N].
\label{eq:phi-second}
\end{equation}
Its first derivative is
\begin{equation}
D\phi(C)[M]=P_C{:}M.
\label{eq:phi-first}
\end{equation}

We next evaluate the determinant term after cofactor pullback.  Since
$\det(\cof F)=J^2$, twice differentiating this composition yields
\begin{align*}
D^2\det(C)[K_F(H),K_F(L)]
+D\det(C){:}(H\times L)
=2DJ[H]DJ[L]+2JD^2J[H,L].
\end{align*}
Here $D\det(C)=\cof C=JF$, and \eqref{eq:det-cross} gives
$D\det(C){:}(H\times L)=JD^2J[H,L]$.  Therefore
\begin{equation}
D^2\det(C)[K_F(H),K_F(L)]
=2DJ[H]DJ[L]+JD^2J[H,L].
\label{eq:det-cof-chain}
\end{equation}

The second-order chain rule for $\bar I_2=\phi\circ\cof$ is
\[
D^2\bar I_2[H,L]
=D^2\phi(C)[K_F(H),K_F(L)]
+D\phi(C){:}(H\times L).
\]
Substitution of \eqref{eq:phi-second}, \eqref{eq:phi-first}, and
\eqref{eq:det-cof-chain} proves \eqref{eq:i2-completion}.

The form $Q_C$ is positive definite, $J^{-4/3}>0$, and
$K_F$ is invertible by \Cref{lem:cofactor-derivatives}.  Their pullback
$A_C$ is therefore positive definite.
\end{proof}

The second-invariant factor that will join the first-minor gauge is
\begin{equation}
B_2(F)=A_C(F)-2\pi_C(F)DJ(F)\otimes DJ(F).
\label{eq:b2}
\end{equation}
It is obtained from the positive form $A_C$ by subtracting one scalar outer
product, so its sign can depend on the deformation.

For fixed nonnegative coefficients $c_1,c_2$ with $c_1+c_2>0$, write
$W_{\rm MR}$ for the cell density in \eqref{eq:mooney-energy}.
Define the two-minor gauge
\begin{equation}
G_{\rm MR}(F)=c_1A_1(F)+c_2B_2(F).
\label{eq:gmr}
\end{equation}

Define the combined determinant-curvature coefficient by
\begin{equation}
\Pi_J(F)=c_1\pi_1(F)+c_2\pi_C(F)J.
\label{eq:combined-pressure}
\end{equation}
Combining \eqref{eq:first-invariant-import} and
\Cref{thm:i2-completion} gives the full isochoric Hessian
\begin{align}
D^2W_{\rm MR}(F)[H,L]
={}&G_{\rm MR}(F)[H,L]
-\Pi_J(F)D^2J(F)[H,L]\nonumber\\
&+c_2P_C(F){:}(H\times L),
\label{eq:mr-full-split}
\end{align}
The last two terms are respectively determinant and cofactor curvature.

On a rank-one increment $H=a\otimes n$, both $D^2J(F)[H,H]$ and
$H\times H=2\cof H$ vanish. Thus the complete rank-one isochoric value
in \eqref{eq:mr-full-split} equals $G_{\rm MR}(F)[H,H]$.
Positivity of that chosen gauge is a sufficient rank-one ellipticity
condition. The exact decomposition keeps both curvature channels available
for the discrete compatibility and compensation analysis.

\section{Exact non-aligned reference geometry}
\label{app:nonaligned-geometry}

These data specify the reference mesh in
\Cref{prop:nonaligned-uniform-core}. The deformation, point metric
$A_1(D)$ and full interval-congruence argument are given in
\Cref{sec:coercivity-regimes}; the integer coordinates below complete that
specification.

Divide each axis of $[0,1]^3$ into two intervals.
Split each of the eight cubes into the six tetrahedra
whose vertex paths increment the three coordinate
directions in each possible order.
Use an orientation-preserving local vertex order
and the six midside nodes of the quadratic element.
The resulting parameter nodes form the quarter grid
$t=(i,j,k)/4$, $0\le i,j,k\le4$.
The Cartesian vectors $e_1,e_2,e_3$ determine the coordinate directions;
the current fields use the fixed matrix $D=\operatorname{diag}(2,2,1/4)$
and $E_{12}=e_1\otimes e_2$ defined with the shear family.

Set $X(t)=t$ at every boundary node, and set
$X_2=t_2$, $X_3=t_3$ at every node.
For the interior nodes, define integers $n_{ijk}$
by \Cref{tab:nonaligned-reference} and set
$X_1(i/4,j/4,k/4)=n_{ijk}/2^{55}$.
These rational coordinates specify the geometry
independently of the sampling procedure that
first produced it.

\begin{table}[H]
\centering
\small
\caption{Exact first-coordinate numerators for the
non-aligned reference mesh. The common denominator
is $2^{55}$.}
\label{tab:nonaligned-reference}
\begin{tabular}{@{}ccrrr@{}}
\toprule
$j$&$k$&$n_{1jk}$&$n_{2jk}$&$n_{3jk}$\\
\midrule
1&1&9105314122432956&17060509236021852&28356459511241252\\
1&2&12049108527703530&17816379473272022&27203240556309152\\
1&3&7623885483807398&19502005720601204&26157924656613804\\
2&1&10316193290055142&16881392942710108&27515618967020928\\
2&2&8671466383741623&16989823806708642&24586724701378688\\
2&3&11123479124564504&19221453749166012&24175413339273560\\
3&1&11432525502845932&17898559407134226&29511977488033696\\
3&2&7213745460801190&18134896708814432&27423007014701600\\
3&3&9189194271142126&18086850943127236&27280126714252592\\
\bottomrule
\end{tabular}
\end{table}

\section{Full-curvature compatible-potential mesh}
\label{app:compatible-geometry}

This is the exact reference geometry for
\Cref{prop:compatible-uniform-family}. Its complete finite-shear proof
and the physical quadrature metric $A_{I,h}$ are in
\Cref{sec:coercivity-regimes}. The integer table and its normalization
below specify the certificate inputs.

Partition each cube of the corner grid $\{0,1/2,1\}^3$ into the six
Freudenthal tetrahedra given by the coordinate-order paths.
The resulting quadratic mesh has $48$ cells and $125$ nodes,
at $t=(i,j,k)/4$ with $i,j,k\in\{0,1,2,3,4\}$.
Let $a_h$ be the continuous parent-quadratic vector field whose
nodal values are the integer vectors in \Cref{tab:compatible-noise};
its values are zero when all three indices are even.
The rational reference map is
\[
X_h(t)=t+\frac7{3768}a_h(t).
\]
Thus every reference corner is unchanged, and
$\overline X_h(X_h(t))=t$, where $\overline X_h$ is the continuous
corner-linear potential field. The table uses the three-digit key $ijk$
for the node $(i,j,k)/4$, with vector components $(a_1,a_2,a_3)$.

Let $v_{e,a}$ denote the parameter-cell corners. The matrix $1$- and
$\infty$-norms are the maximum absolute column and row sums.
The integer vertex derivatives satisfy
\[
\max_{e,a}
\norm{D_ta_h(v_{e,a})}_1\,
\norm{D_ta_h(v_{e,a})}_\infty=221760.
\]
The spectral norm is bounded by the square root of this product.
Since $D_ta_h$ is affine on each cell, convexity of the spectral
norm extends the vertex bound to the whole cell. Therefore
\[
\norm{D_tX_h-I_3}_2
\le \frac7{3768}\sqrt{221760}
=\sqrt{\frac{18865}{24649}}<1.
\]
The perturbation is globally Lipschitz on the convex parameter
cube with this constant. The map is strongly monotone and hence
injective; its Jacobian is invertible with positive determinant.
Nonzero integer three-by-three minors of the six edge-deviation
vectors verify full three-directional curvature on all cells.

The compatible-core verification package described in \Cref{sec:certificate-files}
contains the node and connectivity arrays,
integer minor witnesses, the rational Bernstein matrices,
and all interval elimination records for
\Cref{prop:compatible-uniform-family}. Coordinate enclosures
include both this rational map and its retained binary64
realization. The proof concerns the stated exact-real family;
it makes no assertion of uniformly accurate floating-point
arithmetic as the bulk moduli tend to infinity.

\begin{table}[!htbp]
\centering
\footnotesize
\setlength{\tabcolsep}{5pt}
\caption{Integer nodal coefficients of $a_h$.
All-even node indices have coefficient $(0,0,0)$ and are omitted.}
\label{tab:compatible-noise}
\begin{tabular}{@{}crrr@{\quad}crrr@{\quad}crrr@{}}
\toprule
$ijk$ & $a_1$ & $a_2$ & $a_3$ &
$ijk$ & $a_1$ & $a_2$ & $a_3$ &
$ijk$ & $a_1$ & $a_2$ & $a_3$\\
\midrule
\texttt{001} & -5 & 15 & 7 & \texttt{132} & -14 & -9 & 7 & \texttt{314} & 9 & -15 & -11 \\
\texttt{003} & 5 & -2 & -5 & \texttt{133} & -11 & -2 & 7 & \texttt{320} & 6 & 16 & -12 \\
\texttt{010} & 12 & -10 & -10 & \texttt{134} & -6 & -6 & 9 & \texttt{321} & -7 & 12 & 8 \\
\texttt{011} & 2 & -4 & 2 & \texttt{140} & -7 & -13 & -4 & \texttt{322} & -5 & -2 & 7 \\
\texttt{012} & -10 & 4 & 12 & \texttt{141} & 9 & -5 & 9 & \texttt{323} & -3 & -11 & -6 \\
\texttt{013} & 13 & 2 & -4 & \texttt{142} & 4 & 1 & 0 & \texttt{324} & 5 & 15 & 14 \\
\texttt{014} & 8 & 9 & -9 & \texttt{143} & 12 & 10 & 14 & \texttt{330} & -2 & 13 & 10 \\
\texttt{021} & -15 & 9 & 6 & \texttt{144} & -1 & -9 & -12 & \texttt{331} & -8 & -14 & -8 \\
\texttt{023} & 7 & -10 & 10 & \texttt{201} & -3 & 3 & -1 & \texttt{332} & -6 & -10 & -16 \\
\texttt{030} & 4 & 6 & 16 & \texttt{203} & 3 & -7 & -1 & \texttt{333} & 11 & 2 & 7 \\
\texttt{031} & -3 & -11 & -3 & \texttt{210} & 6 & -15 & 11 & \texttt{334} & 3 & 11 & -5 \\
\texttt{032} & -9 & 1 & -4 & \texttt{211} & -3 & -16 & -3 & \texttt{340} & -2 & 9 & 2 \\
\texttt{033} & 0 & -8 & -8 & \texttt{212} & -8 & 7 & -16 & \texttt{341} & 15 & 1 & 11 \\
\texttt{034} & 8 & -8 & 10 & \texttt{213} & -11 & 11 & 1 & \texttt{342} & 0 & 12 & 8 \\
\texttt{041} & 9 & -3 & -8 & \texttt{214} & -1 & -9 & -12 & \texttt{343} & 7 & 12 & -6 \\
\texttt{043} & 14 & -4 & 2 & \texttt{221} & -9 & 15 & 4 & \texttt{344} & 16 & -10 & -6 \\
\texttt{100} & -1 & 11 & 7 & \texttt{223} & -13 & 16 & -9 & \texttt{401} & -14 & 14 & 8 \\
\texttt{101} & 13 & 4 & 1 & \texttt{230} & -3 & -5 & 10 & \texttt{403} & 8 & -14 & -3 \\
\texttt{102} & -5 & -2 & 13 & \texttt{231} & -8 & -12 & -1 & \texttt{410} & -12 & -12 & 5 \\
\texttt{103} & -9 & 4 & -7 & \texttt{232} & 0 & 6 & -9 & \texttt{411} & 11 & -5 & -14 \\
\texttt{104} & 8 & -5 & 1 & \texttt{233} & -9 & -1 & 13 & \texttt{412} & 7 & 16 & -2 \\
\texttt{110} & -16 & 3 & -8 & \texttt{234} & 2 & 6 & -11 & \texttt{413} & 7 & -14 & 12 \\
\texttt{111} & 12 & 0 & 9 & \texttt{241} & -2 & 11 & 1 & \texttt{414} & 11 & -2 & -13 \\
\texttt{112} & 9 & 13 & -12 & \texttt{243} & 11 & -5 & -15 & \texttt{421} & -12 & 6 & 7 \\
\texttt{113} & 6 & -4 & -16 & \texttt{300} & -4 & -5 & -8 & \texttt{423} & -7 & 10 & -2 \\
\texttt{114} & 2 & 2 & 15 & \texttt{301} & 8 & -9 & -7 & \texttt{430} & -7 & -13 & 10 \\
\texttt{120} & 7 & -7 & 11 & \texttt{302} & -4 & -15 & -2 & \texttt{431} & 16 & -13 & 7 \\
\texttt{121} & -13 & -10 & -6 & \texttt{303} & -14 & 12 & -3 & \texttt{432} & 13 & -12 & -10 \\
\texttt{122} & -14 & 9 & 5 & \texttt{304} & -6 & -10 & -10 & \texttt{433} & 11 & -14 & -11 \\
\texttt{123} & -14 & -10 & 2 & \texttt{310} & 2 & 15 & -10 & \texttt{434} & 3 & 1 & 12 \\
\texttt{124} & 8 & -5 & 11 & \texttt{311} & 9 & -3 & 3 & \texttt{441} & -8 & -8 & 1 \\
\texttt{130} & 11 & 3 & 6 & \texttt{312} & -10 & -10 & -16 & \texttt{443} & -9 & 14 & -14 \\
\texttt{131} & -14 & 9 & 7 & \texttt{313} & 2 & -13 & -9 &  & & &  \\
\bottomrule
\end{tabular}
\end{table}
\FloatBarrier

\section{Exact representatives with a prescribed potential trace}
\label{app:trace-representative}

There is a finite-dimensional way to test whether a desired potential
trace admits an exact representative. The construction uses ordinary
orthogonal projection and least-squares algebra
\citep{higham2002accuracy}; its constants depend on the selected
coefficient and residual metrics.
This is an auxiliary representation problem after the potential trace
has been fixed. By \cref{cor:compatible-maximality}, all its exact
representatives give the same assembled added operator.

Give $\mathcal Z_h$ a specified Hilbert norm and let
$\mathsf D:\mathcal Z_h\to\mathcal Y$ collect the cell defect forms
$\mathcal T_e(z_e;\cdot,\cdot)$ in fixed local bases, with a specified
Hilbert norm on the residual space $\mathcal Y$.
Its kernel is $\mathcal Z_h^0$.
Let $\tau$ extract the full external \emph{potential} trace, set
$\mathcal Z_b=\ker\tau$, and restrict
$\mathsf D_b=\mathsf D|_{\mathcal Z_b}$.
Potential constants are not removed from these coordinates when their
traces matter. Fix a lift $z_0$ of a desired trace $b=\tau z_0$, and
let $P_b$ be the $\mathcal Y$-orthogonal projector onto
$\operatorname{range}\mathsf D_b$.

\begin{proposition}[Trace obstruction and least-norm correction]
\label{prop:trace-compatible-repair}
The vector $\eta_b=(I-P_b)\mathsf D z_0$ is independent of the
chosen lift of $b$, and
\[
\min_{\tau z=b}\|\mathsf D z\|_{\mathcal Y}
=\|\eta_b\|_{\mathcal Y}.
\]
An exactly compatible potential with trace $b$ exists precisely when
$\eta_b=0$. Among corrections attaining the minimum defect, the
least-norm correction gives
\begin{equation}
z_*=z_0-\mathsf D_b^\dagger\mathsf D z_0.
\label{eq:trace-compatible-repair}
\end{equation}
Here the pseudoinverse uses the specified two Hilbert metrics.
If $\eta_b=0$ and $\mathsf D_b$ has positive rank, let
$\sigma_b$ and $\sigma_{\max}$ be its smallest positive and largest
singular values. Then
\[
\frac{\|\mathsf D z_0\|_{\mathcal Y}}{\sigma_{\max}}
\le
\min_{\substack{\tau z=b\\\mathsf D z=0}}\|z-z_0\|_{\mathcal Z_h}
=\|\mathsf D_b^\dagger\mathsf D z_0\|_{\mathcal Z_h}
\le
\frac{\|\mathsf D z_0\|_{\mathcal Y}}{\sigma_b}.
\]
\end{proposition}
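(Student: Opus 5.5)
The proof is a reduction to ordinary least squares on the affine fibre $\{z:\tau z=b\}=z_0+\mathcal Z_b$, using only that $\mathsf D$ is linear and that both spaces carry the specified Hilbert norms.

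\emph{Lift independence.} Write an arbitrary admissible potential as $z=z_0+w$ with $w\in\mathcal Z_b$. Then $\mathsf Dz=\mathsf Dz_0+\mathsf D_bw$. If $z_0'$ is another lift of $b$, the difference $z_0'-z_0$ lies in $\mathcal Z_b$, so $\mathsf Dz_0'-\mathsf Dz_0\in\operatorname{range}\mathsf D_b$. This difference is annihilated by $I-P_b$, and therefore $\eta_b$ does not depend on the lift.

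\emph{Minimum defect and existence.} By the orthogonal decomposition $\mathcal Y=\operatorname{range}\mathsf D_b\oplus(\operatorname{range}\mathsf D_b)^\perp$ (the range is closed, being finite dimensional),
\[
\|\mathsf Dz_0+\mathsf D_bw\|^2=\|\eta_b\|^2+\|P_b\mathsf Dz_0+\mathsf D_bw\|^2 .
\]
Choosing $w$ with $\mathsf D_bw=-P_b\mathsf Dz_0$ makes the second term vanish, so the minimum defect is $\|\eta_b\|$. The same identity shows that $\mathsf Dz=0$ is attainable on the fibre precisely when $\eta_b=0$.

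\emph{Least-norm correction.} The minimizers $w$ form the affine set $\{w:\mathsf D_bw=-P_b\mathsf Dz_0\}$. Its element of least $\mathcal Z_h$-norm is $-\mathsf D_b^\dagger P_b\mathsf Dz_0$, by the defining properties of the Moore--Penrose pseudoinverse with respect to the two specified inner products. Since $\mathsf D_b^\dagger$ vanishes on $(\operatorname{range}\mathsf D_b)^\perp$, we have $\mathsf D_b^\dagger P_b=\mathsf D_b^\dagger$, which gives \eqref{eq:trace-compatible-repair}. This requires that the adjoint and pseudoinverse be defined in those metrics and not the Euclidean coefficient ones; that bookkeeping is the main point needing care.

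\emph{Distance bounds.} Assume $\eta_b=0$ and $\operatorname{rank}\mathsf D_b>0$. The exact corrections are $w$ with $\mathsf D_bw=-\mathsf Dz_0$, and the least-norm one is $w_*=-\mathsf D_b^\dagger\mathsf Dz_0$. This $w_*$ lies in $(\ker\mathsf D_b)^\perp$, the span of the right singular vectors of $\mathsf D_b$ with positive singular values, so
\[
\sigma_b\|w_*\|\le\|\mathsf D_bw_*\|\le\sigma_{\max}\|w_*\|,
\]
where $\|\mathsf D_bw_*\|=\|\mathsf Dz_0\|$. Rearranging gives both inequalities. If $\mathsf Dz_0=0$, then $w_*=0$ and the bounds hold trivially.
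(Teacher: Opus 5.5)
Your proof is correct and follows the paper's argument essentially step for step. Both write same-trace potentials as $z_0+v$ with $v\in\mathcal Z_b$, split the residual orthogonally into $\eta_b$ plus an element of $\operatorname{range}\mathsf D_b$, and then use the pseudoinverse and singular-value decomposition in the specified metrics to obtain the least-norm correction and the two bounds; your remarks that $\mathsf D_b^\dagger P_b=\mathsf D_b^\dagger$ and $w_*\in(\ker\mathsf D_b)^\perp$ fill in details the paper leaves implicit, and your argument also covers the rank-zero case that the paper treats separately.
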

\begin{proof}
Every same-trace potential is $z_0+v$, with $v\in\mathcal Z_b$.
The residual decomposes orthogonally as
\[
\mathsf D(z_0+v)
=\eta_b+(P_b\mathsf D z_0+\mathsf D_bv).
\]
Changing the lift adds an element of $\operatorname{range}\mathsf D_b$
and leaves $\eta_b$ unchanged.
The second summand can be set to zero by the pseudoinverse, proving
the minimum defect and the exactness criterion.
The singular-value decomposition gives the unique minimum-norm
correction and the two bounds. When $\mathsf D_b$ has rank zero,
the decomposition and \eqref{eq:trace-compatible-repair} still apply;
no positive singular-value denominator is used.
\end{proof}

Under the common-material-sum hypotheses of \cref{sec:optimal-margin},
take the ideal target $z_0=2c\iota$. Suppose $z_0\in\mathcal Z_h$ and
$\eta_b=0$, and put $\delta z=\mathsf D_b^\dagger\mathsf D z_0$.
The exact margin formula in \cref{sec:optimal-margin} gives
\[
\gamma_h(z_*)=c-\max_{e,q}
 \nu\!\left(-\nabla_X\delta z(X_e(\xi_{eq}))\right).
\]
The minus sign is necessary because the loss function $\nu$ need not
be symmetric. Define
\[
L_b=\sup_{\substack{v\in\mathcal Z_b\\\|v\|_{\mathcal Z_h}=1}}
                     \max_{e,q}\nu\!\left(-\nabla_Xv(X_e(\xi_{eq}))\right),
\]
with $L_b=0$ if $\mathcal Z_b=\{0\}$.
Positive homogeneity and the pseudoinverse estimate give
\[
\gamma_h(z_*)\ge c-\frac{L_b}{\sigma_b}\|\mathsf D z_0\|_{\mathcal Y}
\]
when $\sigma_b$ exists. Exact kernel dimension and finite-dimensional
solvability do not bound $L_b/\sigma_b$ uniformly in mesh size or order.
Such a conclusion requires control of the stated physical gradients
and metrics.

Every exact representative of the same trace gives the same assembled
gauge Hessian. Thus the repair constructs a locally exact representation
of the defect-corrected target operator. A solver benefit or a uniform
repair cost requires additional numerical analysis.

\section{Supporting observations and arithmetic controls}
\label{app:supporting-observations}

These records supply the complete parameters, operator comparisons and
arithmetic qualifications behind \cref{sec:evidence}.
They retain the original equations, numerical outcomes and adverse
choices. Computed eigenvalues remain distinct from the rational and
interval sign certificates.

\subsection{Normalization of high-precision bilinear comparisons}
\label{sec:bilinear-normalization}

For the \(70\)-digit checks of the aligned and non-aligned families,
let \(A_e^{64}\in\mathbb R^{30\times30}\) be the computed complete
cell Hessian and let \(u_e,v_e\in\mathbb R^{30}\) be the nodal
variation vectors. Let \(b_{64}\) be their bilinear value evaluated
in binary64 and \(b_{70}\) the independently differentiated scalar
energy value at \(70\) decimal digits. The reported discrepancy is
\begin{equation}
 e_{\rm bil}=\frac{|b_{64}-b_{70}|}{s_{64}},\qquad
 s_{64}=\operatorname{fl}_{64}
       \bigl(\|A_e^{64}\|_F\,\|u_e\|_2\,\|v_e\|_2\bigr).
 \label{eq:bilinear-operator-normalization}
\end{equation}
Here \(\operatorname{fl}_{64}\) denotes the binary64 evaluation of
the norms and their product. Stored nodal values are promoted exactly,
and the independent differentiation uses the ideal four-point rule.
The \(80\)-digit comparisons below use their separately stated
reference-bilinear normalization.

\subsection{Normal form and a fixed geometry-aligned coefficient}
\label{sec:geometry-gauge-evidence}

The opposite-edge normal form was checked independently in integer
arithmetic. All $18^3=5{,}832$ entries of the polarized tensor agreed
exactly, as did $32$ integer cubic examples. The exact Gram matrix
of the geometry-to-Hessian map gives squared singular values
$8192c_4^2$ and $32768c_4^2$, each nine times, in the coordinates of
\Cref{cor:faithful-jet}. The four printed vertex-matrix identities
were also checked as integer equalities.

Three positive-weight rational cubature rules checked the
rule dependence in \Cref{prop:symmetric-rule-cubic}.
A centroid/vertex rule with five points gives $c_Q=1/180$;
a centroid/edge-midpoint rule with seven points gives
$c_Q=-1/360$; an eleven-point combination gives $c_Q=0$.
All quadratic moments were exact. The last rule also
integrated every cubic moment exactly.
Twenty-four independently generated affine matrix fields per
rule satisfied the determinant identity with zero rational
discrepancy. These rules test positive, negative, and
zero defect coefficients.

The aligned coefficient $\operatorname{diag}(45,45,0)$ had
negative gauge curvature near the endpoints of the shear interval
$[-1/4,1/4]$. Its five sampled minimum eigenvalues in the
$A_1(D)$ metric were approximately
$-0.9692,-0.0833,0.4844,-0.0833,-0.9692$.
Allowing the diagonal coefficient to vary in joint selection over the
same interval gave
$Z_*=\operatorname{diag}(191/4,191/4,0)$ after rationalization.
The five observed minima became
$0.2241,0.2890,0.3275,0.2890,0.2241$.
The rational Bernstein certificate in
\Cref{prop:aligned-shear-chart} establishes positivity between
the sampled states. The failed coefficient and the successful
certificate are both recorded.

\subsection{Complete-pressure boundary reconstruction}

The parameter cube $[0,1]^3$ was divided into $n^3$ cubes, each split
by the six Freudenthal tetrahedra. For a real curvature
amplitude $\beta$, shared quadratic nodes were mapped to a
reference geometry by nodal interpolation of
\[
X(t)=t+\beta\prod_{i=1}^3t_i(1-t_i)e_3.
\]
The polynomial in this display prescribes nodal values; each
isoparametric cell map is quadratic. Its first two components are
affine, so its Jacobian determinant is affine on each cell.
Positive vertex determinants establish whole-cell orientation
for this family. The first two components are unchanged and the
third is strictly increasing along vertical lines, giving
global injectivity with the fixed cube boundary.

The current map was the quadratic field
$y_h(X)=DX+\epsilon(X_2-1/2)^2e_1$ associated with
\Cref{prop:aligned-shear-chart}.
For $n=2$, we used $\beta\in\{0,8\}$,
$\epsilon\in\{-1/4,0,1/4\}$, and
$\kappa\in\{10,1000\}$, with either a full boundary constraint
or a constraint on the face $t_1=0$. This gives $24$ complete
operator cases. Four additional cases used $n=4$, $\beta=8$,
$\epsilon=1/4$, both bulk moduli, and both constraints.
The large cases tested the boundary representation and the solve obtained
by eliminating interior coordinates; iterative comparisons were omitted.

The complete matrix $K_h$ and translated matrix $G_h$ were
assembled independently. A third route integrated the cofactor
second variation over oriented external facets to form
$B_\Gamma$. Every pressure contribution was retained, including
the small pressure-geometric term generated by binary64
rounding in the nominally volume-preserving cases.
\Cref{tab:aligned-complete} reports the results. The reconstruction
column is
$\|K_h-G_h+R_\Gamma^TB_\Gamma R_\Gamma\|_F/\|K_h\|_F$.
For a seeded right-hand side $b$ and computed solution $x$,
the solve column is the independently recomputed relative residual
$\|K_hx-b\|_2/\|b\|_2$.

For this elimination, partition the free displacement coordinates into
interior coordinates $i$ and free boundary coordinates $\Gamma$.
The Schur complement of the positive reference is
\[
S_G=G_{\Gamma\Gamma}-G_{\Gamma i}G_{ii}^{-1}G_{i\Gamma},
\qquad S_K=S_G-B_\Gamma.
\]
The reduced complete solve uses $S_K$. Since $G_{ii}\succ0$, all negative
directions of the complete matrix are represented by $S_K$.
When $S_G\succ0$, the trace pencil is the generalized eigenvalue problem
$S_Kx_\Gamma=\lambda S_Gx_\Gamma$; its negative eigenvalues give the
partially constrained counts in the table.

For the $n=2$ comparison, both preconditioners used exact global
Cholesky reference actions. One was the translated gauge; the
other spectrally clipped the pointwise material Hessian and
retained the positive cell-volume outer product.
The complete matrix in the MINRES iteration was identical.
The iteration stopped on an independently evaluated relative
residual of $10^{-9}$ or at $240$ columns; these are diagnostic
stopping settings.
With a full constraint, the translated gauge equals the complete
matrix, so its one-column result follows from exact
preconditioning. The clipping reference took $17$ columns.
With one face constrained, the gauge required $185$--$240$
columns and reached the residual setting in nine of twelve cases;
clipping required $194$--$240$ columns and reached it in eleven.
Neither solver shows a consistent partial-boundary advantage from
the gauge alone. The explicit boundary Schur solve retained the
complete indefinite equation in all cases.

Eight further cases uniformly dilated the $n=2$, $\beta=8$,
$\epsilon=1/4$ field to
$J=1\pm1/1024$, using both bulk moduli and constraints.
The cell pressures were approximately $\pm0.009765625$ and
$\pm0.9765625$. Boundary reconstruction remained below
$2.75\times10^{-17}$, and all complete trace-solve residuals were
below $1.11\times10^{-12}$.
For the compressed $\kappa=1000$ case, the minimum generalized
eigenvalue of the full pointwise density gauge was approximately
$-0.4202$, although the assembled translated reference remained
numerically positive by Cholesky factorization.
That density omits the separate positive
cell-volume outer product. Its pointwise sign alone therefore
does not decide the assembled sign.
A uniform nonzero-pressure extension of
\Cref{prop:aligned-shear-chart} would need its own argument.

Independent $70$-digit differentiation of the complete scalar
condensed energy checked eight local bilinear actions, with
maximum discrepancy $4.916\times10^{-17}$ using the normalization in \eqref{eq:bilinear-operator-normalization}. It used the exact
promoted binary64 nodal values and the ideal four-point rule;
quadrature-node rounding is consequently a separate error source. The $36$ cases are manufactured operator states.
A load-controlled nonlinear trajectory and production-scale
solver performance remain unmeasured.

\subsection{Aligned and unaligned curvature under two rules}

A complete-operator alignment comparison used the same $n=2$
mesh and a scalar continuous quadratic field $\psi_h$
with zero boundary values. Its interior nodal values were
seeded normal samples. For an axis $a\in\{1,3\}$, let
$\psi_{h,a}$ be this field divided by the largest absolute
vertex value of $\partial_a\psi_h$ over all cells.
The reference map was
$X(t)=t+\theta\psi_{h,a}(t)e_a$, with
$\theta\in\{0,1/4,1/2,3/4,0.9\}$.
Its determinant is $1+\theta\partial_a\psi_{h,a}$.
The vertex-slope normalization and the affine derivative
give whole-cell orientation; the observed minimum relative
reference Jacobian was $0.10$. Fixed boundary values and
one-component monotonicity give global injectivity.

The current deformation was $y=DX$. Its physical gradient
is exactly the specified $D$, so the condensed pressure
coefficient vanishes and the complete positive pressure
outer product remains. The two axes, five amplitudes,
three bulk moduli $0,10,1000$, and two rules gave $60$
full-Dirichlet operator observations on $81$ free coordinates.
The second rule was the positive eleven-point cubic-exact
rule above. Each rule defined its own complete energy.

With curvature in the first direction, the four-point
relative difference $\|G_h-K_h\|_F/\|K_h\|_F$ reached
$5.86\times10^{-4}$. The cubic-exact rule reduced this
cofactor compensation to the arithmetic scale.
With curvature in the third direction, the coefficient
$Z_*$ is aligned and both rules have compensation at that
scale. All $60$ complete matrices remained numerically
positive, with smallest eigenvalues at least $1.3796$ in
the given nodal coordinates.
The comparison agrees with the rule/alignment
classification; it does not demonstrate an instability of
the four-point complete equation, since a nonzero separated
defect can coexist with assembled coercivity.

\subsection{Defect-corrected reference observations on the certified mesh}

The non-aligned geometry in
\Cref{app:nonaligned-geometry} supplies $18$
complete-operator observations. They use deformation
parameters $\epsilon=-1/4,0,1/4$, bulk moduli
$\kappa=0,10,1000$, and either the full boundary
or the face $t_1=0$ constrained.
The reference defect is assembled once.
Each translated matrix then subtracts that same
defect, and an independent oriented-facet integral
supplies the exact boundary form.

The largest relative recomposition discrepancy
$\|K_h-\widehat G_h+R_\Gamma^TB_\Gamma R_\Gamma\|_F/
\|K_h\|_F$ is $4.041\times10^{-17}$.
Every corrected reference is numerically positive, and
all complete trace solves have independent
relative residual below $5.380\times10^{-13}$.
For the one-face constraint, the numerical
negative counts of the complete matrix are
$31$, $25$, and $20$ at the three bulk moduli,
respectively, at every sampled deformation.
The corrected reference and the complete matrix
therefore have distinct sign roles.

The largest arithmetic cell pressure is
$5.785\times10^{-13}$; all resulting pressure-geometric
terms were retained.
Three independent $70$-digit scalar-energy
bilinear checks have maximum discrepancy $8.223\times10^{-17}$
under the normalization in \eqref{eq:bilinear-operator-normalization}.
The interval theorem concerns the exact
parametric deformation family; these observations
also expose the rounding in its binary64 nodal
construction. No uniform floating-point guarantee
as the bulk modulus tends to infinity is inferred.

\subsection{Complete finite-state and boundary-equivalence observations}

Let $a_h$ be the integer-nodal perturbation field of
\Cref{app:compatible-geometry}.
The reference maps $X_h^\alpha=t+(\alpha/471)a_h(t)$ use
$\alpha\in\{0,1/8,1/4,1/2,7/8\}$.
\Cref{fig:compatible-potential-geometry} displays the strongest curved
reference and its corner-linear coefficient potential.

With $D=\operatorname{diag}(2,2,1/4)$, the physical maps
$y_h(X)=FX$ with $F=I_3$, $I_3+(1/4)E_{12}$, and
$D+(1/8)E_{12}$, the bulk values $0,10,1000$, and two
essential-constraint sets give $90$ complete operator observations.
The compatible potential is \(4\overline X_h\) for the first two states
and \(Z_*\overline X_h\) for the third.
The face constraint fixes variations on $t_1=0$; the vertex
constraint fixes all three components at $t=(0,0,0)$.
All four nonzero-amplitude meshes have
exactly certified three-directional curvature on every cell.
For a computed matrix $A$ and a specified reference matrix $B$,
define the normalized difference
$d(A;B)=\|A-B\|_F/\max\{1,\|B\|_F\}$.
The volume and oriented-face gauge matrices agree to
$6.66\times10^{-15}$ with the volume gauge as reference.
The constant-gauge four-point defect reaches $0.0220$ with its
exactly integrated gauge as reference.
Three independent determinant-moment/face-minor checks agree
exactly in rational arithmetic, including cancellation on
interior faces. Six $80$-digit differentiations of the complete
material energy agree with the implemented bilinear forms to
$1.61\times10^{-14}$ after dividing the absolute difference by
$\max\{1,|b_{\rm ref}|\}$, where $b_{\rm ref}$ is the
high-precision reference bilinear value.

\FloatBarrier

The $90$ rows contain $88$ positive compatible references.
The two failures occur at the strongest curved geometry,
zero bulk modulus, and the more strongly stretched state
$D+(1/8)E_{12}$ with potential
$z_h=Z_*\overline X_h$ from
\Cref{prop:aligned-shear-chart}.
Their minimum eigenvalues are approximately $-0.0526$ and
$-0.0533$ for the face and vertex constraints.
The observed bulk values $10$ and $1000$ restore positivity
through the additional volumetric outer term. The
defect-corrected constant-gauge references remain positive in
these observations. Exact gauge quadrature alone therefore
does not imply an admitted positive reference at every material state.

An additional $18$-row control curves only interior nodes.
The two potential traces then agree, and the compatible and
defect-corrected constant-gauge operators agree to
$1.31\times10^{-15}$ in $d$, with the corrected reference as $B$, as required by
\Cref{cor:compatible-maximality}.
With curved boundary traces they can differ; the strongest
reference gives a normalized difference of $0.0775$ between the
two gauge matrices, with the compatible gauge as reference.
These findings identify a quadrature-compatible local
construction and its coercivity regime, without claiming
general nonlinear-solver speedup.

\subsection{A trace-preserving potential change}

Use the nonuniform family of \cref{sec:compatible-loaded-evidence}
and the normalized matrix difference \(d\) defined above.
A separate potential comparison changes only one interior nodal value
while preserving its entire boundary trace. Its exactly integrated
assembled gauge changes by $4.248\times10^{-16}$ in the normalization
$d$ defined above; the four-point matrix changes by $0.0037862$.
Boundary equivalence therefore does not imply cellwise quadrature exactness
after an arbitrary quadratic change of potential. This comparison
clarifies the distinct assumptions on the potential trace and its
cellwise polynomial space.

\subsection{Supporting complete-pressure and arithmetic checks}

The material identities were independently checked through direct
differentiation of the scalar Mooney--Rivlin energy and through the
cofactor chain rule in \cref{app:material-calculus}.
The continuous-family certificates use exact rational Bernstein
coefficients and outward-enclosed matrix inequalities.
Ordinary computed eigenvalues remain separate from those sign proofs.

A physical-state reconstruction with $176{,}982$ displacement
coordinates and $34{,}295$ Tet10 cells illustrates the complete-pressure
distinction. Of these cells, $26{,}613$ use condensed $P_0$ pressure
and $7{,}682$ use quadrature-local volumetric penalties.
Both compared matrices retain the latter cells' contributions.
The partial Hessian in this comparison omits the condensed
pressure contributions $Q_{P0}+P_{P0}$ in
\eqref{eq:qp0-import}--\eqref{eq:pp0-import}.
On that state it has $22$ negative eigenvalues reported by the sparse factorization,
whereas the full reduced-energy Hessian has zero. The gauges and
complete tangent were reconstructed from the same state, and the
scope-difference identity closes to $2.149\times10^{-16}$ relatively.
These cuDSS counts are numerical factorization observations, without
perturbation certification.

A small component also requires its own accuracy interpretation.
For the constant-coefficient cofactor-defect component of that
physical state, the matrix Frobenius norm is $5.7130\times10^{-8}$.
Direct and parent-coordinate matrices differ in Frobenius norm by
$1.9532\times10^{-16}$ absolutely, or $3.4189\times10^{-9}$
relative to that component. Restricted affine fields increase
the relative action difference to $1.69\times10^{-7}$.
This is near-kernel forward sensitivity, consistent with standard
backward-versus-forward error analysis
\citep{higham2002accuracy,croci2024mixed}. A componentwise relative
ratio is not the error of the complete tangent action.
The observations limit claims of uniformly accurate independent
application of a tiny correction. The exact compatibility and
compensation identities themselves are unaffected.

\subsection{Verification files for the finite-family certificates}
\label{sec:certificate-files}

The authors retain the proof objects and verification scripts for these
finite families, available upon reasonable request and planned for public
release in a subsequent version. The file \texttt{aligned-shear.json}
gives the exact Bernstein matrices and rational pivots for the $1/5$ margin
in \Cref{prop:aligned-shear-chart}.
The nonaligned-core package contains the exact geometry,
the $23/100$ material certificate, and the interval
matrix endpoints and binary64 triangular transform used in the
congruence proof. Its driver \texttt{verify\_core\_certificate.py}
checks the geometry, material inequality and matrix enclosure.

The compatible-core package contains the integer geometry
definition, rational material coefficients, all $8640$ retained pivot
intervals, and the distortion
witness. Its driver \texttt{verify.py} reconstructs the interval certificate.
Each package has a README with extraction and execution instructions.
These files specify the particular computational proofs reported above;
the mathematical matrices and reconstruction formulas are also defined
in \cref{sec:coercivity-regimes} and
\cref{app:nonaligned-geometry,app:compatible-geometry}.
The certificate programs use Python, NumPy, SciPy and mpmath, independently
of the simulation framework.

\section*{CRediT author statement}
Yanlin Liu originated the central theoretical ideas and mathematical
formulations and led the theoretical development.

\noindent\textbf{Yanlin Liu:} Conceptualization (central theoretical ideas);
Methodology (mathematical formulation and method development);
Formal analysis (theoretical analysis and derivations);
Software (research-specific development, framework extensions and performance
optimization); Investigation; Validation; Writing---original draft;
Writing---review and editing.

\noindent\textbf{Kaixiang Yao:} Conceptualization (application motivation
for reducing warm execution time in repeated robotics simulations);
Writing---review and editing.

\noindent\textbf{Chao Huang:} Software (development of the foundational
simulation framework); Conceptualization (introduction of the underlying
numerical problem and the near-incompressibility regime).

\noindent\textbf{Yao Shen:} Supervision; Resources (provision of computational
resources); Conceptualization (broad problem introduction and research context).

\section*{Data and code availability}
Interval certificates, exact verification data, constitutive verification
scripts, and controlled inputs are available from the authors upon
reasonable request. A public reproducibility repository is planned for a
subsequent version. The private simulation platform and its production
solvers are outside the planned release.

\section*{Declaration of generative AI use}
OpenAI Codex was used to assist with experimental coding, preliminary
mathematical review, and preliminary drafting. Anthropic Claude was
used for draft reviewing and editing. Yanlin Liu provided the central
theoretical ideas and initial mathematical formulations.
Responsibility for the claims, citations, computational results,
and final manuscript rests with the named authors.

\bibliographystyle{unsrtnat}
\bibliography{references}

\end{document}